\tikzset{
    >=stealth,
    every picture/.style={thick},
    graphs/every graph/.style={empty nodes},
}
\tikzstyle{vertex}=[
\tikzstyle{printersafe}=[decoration={snake,amplitude=0pt}]
\newcommand{\Cl}{\operatorname{Cl}}
\newcommand{\Pic}{\operatorname{Pic}}
\newcommand{\supp}{\operatorname{supp}}
\newcommand{\Spec}{\operatorname{Spec}}
\newcommand{\ddivv}{\operatorname{div}}
\newcommand{\Cox}{\operatorname{Cox}}
\newcommand{\PSL}{\operatorname{PSL}}
\newcommand{\SL}{\operatorname{SL}}
\newcommand{\Li}{\mathcal{L}}
\newcommand{\pp}{\mathbb{P}}
\newcommand{\qq}{\mathbb{Q}}
\newcommand{\zz}{\mathbb{Z}}
\newcommand{\nn}{\mathbb{N}}
\newcommand{\rr}{\mathbb{R}}
\newcommand{\ff}{\mathbb{F}}
\newcommand{\kk}{\mathbb{K}}
\newcommand{\oo}{\mathcal{O}}
\def\O#1.{\mathcal {O}_{#1}}   
\def\pr #1.{\mathbb P^{#1}}    
\def\af #1.{\mathbb A^{#1}}   
\def\ses#1.#2.#3.{0\to #1\to #2\to #3 \to 0} 
\def\xrar#1.{\xrightarrow{#1}}   
\def\K#1.{K_{#1}}      
\def\bA#1.{\mathbf{A}_{#1}}   
\def\bM#1.{\mathbf{M}_{#1}}    
\def\bL#1.{\mathbf{L}_{#1}}    
\def\bB#1.{\mathbf{B}_{#1}}    
\def\bK#1.{\mathbf{K}_{#1}}   
\def\subs#1.{_{#1}}     
\def\sups#1.{^{#1}}
\newcommand{\lif}{\upharpoonleft \kern-0.35em}
\newcommand{\ouclu}{\overline{\mathcal{U}}}
\newcommand{\uclu}{\mathcal{U}}
\newcommand{\clu}{\mathcal{A}}
\newcommand{\val}{\mathcal{V}}
\newtheorem{introthm}{Theorem}[section]
\newtheorem{introcor}[introthm]{Corollary}
  \newtheorem{theorem}{Theorem}[section]
  \newtheorem{lemma}[theorem]{Lemma}
  \newtheorem{proposition}[theorem]{Proposition}
  \newtheorem{corollary}[theorem]{Corollary}
  \newtheorem{definition}[theorem]{Definition}
  \newtheorem{example}[theorem]{Example}
\newtheorem{remark}[theorem]{Remark}
\theoremstyle{remark}
\numberwithin{equation}{section}
\begin{document}

\thanks{J.M. was partially supported by NSF research grant DMS-2443425.}

\title[Fano compactifications of mutation algebras]
{Fano compactifications of mutation algebras}

\author[J.~Enwright]{Joshua Enwright}
\address{UCLA Mathematics Department, Box 951555, Los Angeles, CA 90095-1555, USA
}
\email{jlenwright1@math.ucla.edu }

\author[L.~Francone]{Luca Francone}
\address{Università di Roma Tor Vergata, Roma, ITA
}
\email{francone@mat.uniroma2.it}

\author[J.~Moraga]{Joaqu\'in Moraga}
\address{UCLA Mathematics Department, Box 951555, Los Angeles, CA 90095-1555, USA
}
\email{jmoraga@math.ucla.edu}

\author[H.~Spink]{Hunter Spink}
\address{Department of Mathematics, University of Toronto, Toronto.}
\email{hunter.spink@utoronto.ca}

\subjclass[2020]{Primary 14M25, 14E25;
Secondary  14B05, 14E30, 14E05.}
\keywords{mutation algebras, semigroup algebras, cluster algebras, cluster type varieties, Cox rings}

\begin{abstract}
In this article, 
we introduce the notion of mutation semigroup algebras. 
This concept simultaneously generalizes 
cluster algebras and semigroup algebras.
We show that, under some mild conditions on the singularities,
the spectrum $U={\rm Spec}(R)$ of a mutation semigroup algebra $R$ admits a log Fano compactification $U\hookrightarrow X$. 
The compactification $X$ can be chosen to be a $\qq$-factorial log Fano variety whenever $U$ is $\qq$-factorial.
Furthermore, we prove that a $\qq$-factorial klt Fano variety $X$ is of cluster type if and only if its Cox ring ${\rm Cox}(X)$ is a ${\rm Cl}(X)$-graded mutation semigroup algebra.
In order to enlighten the previous theorems, 
we provide several explicit examples motivated by birational geometry, representation theory, and combinatorics.
\end{abstract} 

\maketitle

\setcounter{tocdepth}{1} 
\tableofcontents

\section{Introduction}
Cluster and upper cluster algebras are commutative rings of combinatorial nature defined by Fomin and Zelevinsky~\cite{FZ02,FZ03,FZ05,FZ07}. In the last two decades, cluster algebras have had a tremendous impact on different fields of mathematics, including combinatorics~\cite{chapoton02polytopal, fomin05generalized, ingalls09noncrossing, RS16}, Lie theory \cite{Lec10, geiss08partial, geiss2011kac,  Fei21tensor, Francone2023minimal,GLSB23, CGGLSS25} representation theory of quantum algebras \cite{hernandez10cluster,hernandez15cluster, hernandez16cluster, qin17triangular, geiss24representations, kashiwara20monoidal,kashiwara24monoidal}, Teichm\"uler theory~\cite{FG06,fock09quantum,fock09cluster} and Poisson geometry \cite{berenstein05quantum,gekhtman05cluster,gekhtman10cluster}. 
Cluster algebras are also important in Mirror Symmetry due to the work of Gross, Hacking, Keel, and Kontsevich~\cite{GHK15,GHKK18}. Cluster algebras are defined in terms of seeds and mutations between seeds. The mutations are binomial transformations of elements across different seeds (see Definition~\ref{def:CA}).
In~\cite{LP16}, Lam and Pylyavskyy introduced the concept of Laurent phenomenon algebras (LPAs), which generalize the concept of cluster algebras, by allowing more general mutations associated to irreducible Laurent polynomials. 
They proved that many good properties of cluster algebras, such as the Laurent phenomenon, are also valid for LPAs. 
In this article, we introduce two new notions of commutative rings. Mutations algebras and mutation semigroup algebras. 
The former, already suggested by~\cite[Definition 6]{corti2023cluster}, is a more general form of commutative algebra defined by an initial seed and mutations associated with possibly reducible polynomials. 
The definition of mutation semigroup algebra is purely algebraic (see Definition~\ref{def:MSA}). Essentially, a mutation semigroup algebra 
is the intersection of a mutation algebra with a semigroup algebra, 
in their common function field.
Two of the aims of this article are to show that mutation semigroup algebras are deeply connected to the geometry of Fano varieties and Cox rings of algebraic varieties. 
In Subsection~\ref{subsec:FC}, we will explain results regarding Fano compactifications of the spectra of mutation algebras. 
In Subsection~\ref{subsec:CR}, we will investigate the geometry of a variety $X$ when its Cox ring admits the structure of a mutation semigroup algebra. 
Finally, in Subsection~\ref{subsec:comb}, we will provide several explicit examples motivated by Lie theory and their combinatorics.

\subsection{Fano compactifications}\label{subsec:FC}
A {\em cluster type variety} is a partial compactification of an algebraic torus 
on which the standard volume form of the algebraic torus has no zeros (see Definition~\ref{def:ct}).
The divisor given by the poles of the volume form is known as a {\em cluster type boundary}. 
Cluster type varieties provide a generalization of cluster varieties. This notion was introduced by Enwright, Figueroa, and Moraga in~\cite[Definition 2.25]{EFM24}; however, Corti had already given a very similar definition in~\cite[Definition 2]{corti2023cluster}. 
Our first result characterizes mutation semigroup algebras as rings of regular functions 
on the complement of an ample divisor supported on a cluster type boundary.

\begin{introthm}\label{introthm:msa-defs-agree}
Let $R$ be a finitely generated commutative $\kk$-algebra. 
Assume that ${\rm Spec}(R)$ has klt singularities. 
Then, the following statements are equivalent:
\begin{enumerate}
\item $R$ is a mutation semigroup algebra, 
\item there exists a normal projective variety $X$, a cluster type pair $(X,B)$, and an ample divisor $0\leq A\leq B$ such that the isomorphism 
$R \simeq H^0(X\setminus A, \mathcal{O}_X)$ holds.
\end{enumerate} 
\end{introthm}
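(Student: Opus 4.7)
The plan is to prove the two implications separately, with the deeper difficulty residing in $(1) \Rightarrow (2)$.

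For $(2) \Rightarrow (1)$, I would first unpack the cluster type hypothesis: the open torus $T = X \setminus B$ carries a nowhere-vanishing volume form, and the birational pseudo-isomorphisms between different log Calabi--Yau small modifications of $(X,B)$ supply cluster-type mutations on elements of the function field $K = K(X)$. Restricting these mutations to elements that remain regular on $U = X \setminus A$ equips $R = H^0(U,\mathcal{O}_X)$ with a mutation algebra structure inside $K$. Independently, because $0 \le A \le B$ is ample, there is a natural grading on $R$ coming from the valuations along the prime components of $A$; by choosing a toric presentation compatible with the open torus $T$, one produces a concrete semigroup algebra $R_{\mathrm{semi}} \subset K$ whose intersection with the mutation algebra recovers $R$, exhibiting $R$ as a mutation semigroup algebra in the sense of Definition~\ref{def:MSA}.

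For $(1) \Rightarrow (2)$, write $R = R_{\mathrm{mut}} \cap R_{\mathrm{semi}}$ inside their common function field $K$, where $R_{\mathrm{mut}}$ is the ambient mutation algebra and $R_{\mathrm{semi}}$ the ambient semigroup algebra. The first step is to apply the Fano compactification theorem for mutation algebras (promised in Subsection~\ref{subsec:FC}) to $R_{\mathrm{mut}}$, producing a normal projective variety $X'$ with a cluster type pair $(X', B')$ such that $\Spec(R_{\mathrm{mut}})$ is the complement in $X'$ of a divisor supported on $B'$. The semigroup algebra structure on $R_{\mathrm{semi}}$ translates into a full-dimensional cone of effective divisor classes on $X'$ supported on $B'$, and an interior candidate $A \le B'$ is extracted from it. The klt hypothesis on $\Spec(R)$ is then used to run a minimal model program on $(X', B')$ that converts $A$ into a genuinely ample $\qq$-Cartier divisor while preserving the cluster type property of the boundary, yielding a new pair $(X,B)$. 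Finally, one verifies $H^0(X \setminus A, \mathcal{O}_X) \simeq R$ by comparing local rings along the components of $A$, using that complements of ample divisors are affine together with the description of $R$ as $R_{\mathrm{mut}} \cap R_{\mathrm{semi}}$.

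The primary obstacle is the construction step in $(1) \Rightarrow (2)$: simultaneously producing a projective compactification on which the semigroup-induced divisor $A$ is both ample and supported in a cluster type boundary. The tension is between the mutation structure, which dictates the possible cluster type boundaries, and the semigroup structure, which dictates the ample classes, and reconciling them is precisely where the klt hypothesis and the MMP intervene. A secondary technical issue is matching global sections on the candidate compactification with the concrete algebraic intersection defining a mutation semigroup algebra, which requires care about codimension one behavior along the non-ample components of $B \setminus A$.
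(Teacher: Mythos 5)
Your proposal has two genuine gaps, both traceable to treating the informal gloss in the abstract as the actual definition. Definition~\ref{def:MSA} does \emph{not} present $R$ as an intersection $R_{\mathrm{mut}}\cap R_{\mathrm{semi}}$ of one ambient mutation algebra with one ambient semigroup algebra; it presents $R$ as a finite intersection $R_0\cap\cdots\cap R_n$ of \emph{embedded semigroup algebras} $\iota_i\colon \kk[\sigma_i^\vee\cap M]\hookrightarrow \operatorname{Frac}(R)$, with each $\iota_i$ related to $\iota_0$ by a mutation and with the height-one condition (3). Because of this, your $(1)\Rightarrow(2)$ argument starts from data ($R_{\mathrm{mut}}$, $R_{\mathrm{semi}}$) that are not part of the hypothesis, and worse, its first step — ``apply the Fano compactification theorem for mutation algebras promised in Subsection~\ref{subsec:FC}'' — is circular: that compactification statement \emph{is} the theorem under proof (together with Corollary~\ref{introcor:msa-Fano-comp}); the paper contains no independent compactification result for the pure mutation-algebra part that you could cite. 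The actual proof first extracts from the MSA axioms a purely geometric statement about $U=\operatorname{Spec}(R)$ itself: using condition (3) and Zariski's Main Theorem, each chart $\operatorname{Spec}(R_i)\to U$ is an open embedding in codimension one, their images jointly meet every prime divisor of $U$ (else $R\subsetneq\Gamma(U\setminus D,\mathcal{O}_U)\subset R_0\cap\cdots\cap R_n$, a contradiction), and the volume forms glue to a nowhere-vanishing form on $U$. Only then does the hard construction begin: a projective closure of $U$, principalization of the boundary divisor via Gruson--Raynaud flattening, a log resolution, two runs of the MMP (terminating by Lai's lemma), a verification that the resulting model is of Fano type, contraction of the divisors lying over $U$, and finally the ample model of a big divisor fully supported on the complement of $U$, which yields $(X,B)$, $A$, and $R\simeq H^0(X\setminus A,\mathcal{O}_X)$. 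This is where the klt hypothesis genuinely enters (e.g.\ to make $B_U$ $\qq$-Cartier and to control the MMP), and none of it is supplied by your sketch.

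In the converse direction your outline also misses the crux. To verify Definition~\ref{def:MSA} one must exhibit \emph{finitely many} toric charts of $X\setminus A$ whose images cover a big open subset, so that $R$ is literally the intersection of the corresponding rings and condition (3) holds. The paper does this by invoking Corti's Lemma~16.2.8 to produce, for each prime divisor of $X\setminus B$ missed by the initial torus chart, a mutated torus embedding that hits it, and then Lemma~\ref{lem:affine-partial-compactifications} to extend each such embedding to an affine toric variety $V_i\to X\setminus A$ whose invariant divisors correspond to the components of $B\setminus A$; the semigroup cones $\sigma_i$ come from $B\setminus A$, not from ``valuations along the components of $A$'' as you suggest, and the mutation and height-one conditions are then checked chart by chart. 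Saying that small log Calabi--Yau modifications ``supply cluster-type mutations'' and that a grading by valuations along $A$ ``produces a concrete semigroup algebra'' does not establish any of conditions (1)--(3) of the definition; in particular it gives no control on whether the charts jointly see every divisor of $X\setminus A$, which is exactly what makes $R$ equal to the finite intersection.
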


We refer the reader to Definition~\ref{def:klt} for the notion of klt singularities.
Compactifications of the spectra of cluster algebras have been studied by Gross, Hacking, Keel, and Kontsevich~\cite{GHKK18}. In such an article, these compactifications are called {\em partial minimal models}. In~\cite{CMNC22}, Cheung, Magee, and N\'ajera Ch\'avez use a notion of convexity in the ambient space of the scattering diagram of a cluster variety to find a suitable projectivization of its dual. 
Conversely, in~\cite{Francone2024Cox} Francone has proved that, under  mild conditions, the Cox ring ${\rm Cox}(X)$ of a
compactification $X$ of the spectrum of an upper cluster algebra admits a cluster algebra structure.
The proof of Francone uses the monomial lifting method (see~\cite[Section 2.2]{Francone2024Cox}).
Relations between Cox rings and cluster algebras are also discussed in \cite{GHK15birational, mandel19cluster,cheung25valutative}.
Theorem~\ref{introthm:msa-defs-agree} can be thought of as giving a partial minimal model for any mutation semigroup algebra whose spectrum has klt singularities. 
Conversely, cluster type varieties can be viewed as 
partial minimal models of the spectra of mutation semigroup algebras.
We expect the condition on the singularities to be very mild. Indeed, the authors know no explicit mutation semigroup algebra whose spectrum has worse than klt singularities.
Our next theorem asserts that the projective models mentioned in Theorem~\ref{introthm:msa-defs-agree} can be chosen to be Fano or near Fano.

\begin{introcor}\label{introcor:msa-Fano-comp}
Let $R$ be a finitely generated commutative $\kk$-algebra.
Assume that $R$ is a mutation semigroup algebra 
and that $U={\rm Spec}(R)$ has klt singularities. 
Then, the following statements hold: 
\begin{enumerate}
\item the affine variety $U$ admits a log Fano compactification, and 
\item if the volume form $\Omega_U$ of $U$ has no poles on $U$, then $U$ admits a klt Fano compactification.
\end{enumerate} 
Furthermore, if $U$ is $\qq$-factorial, then the two previous compactifications can be chosen to be $\qq$-factorial.
\end{introcor}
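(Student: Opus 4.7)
The plan is to invoke Theorem~\ref{introthm:msa-defs-agree} to realize $U$ as an affine open $X\setminus A$ inside a cluster type pair $(X,B)$ with ample divisor $0\le A\le B$, and then perturb $B$ by $A$ to obtain a log Fano boundary. For part~(2), the key point is that when the volume form has no poles on $U$, the boundary $B$ is supported in $X\setminus U$, so we can contract it entirely by passing to the anticanonical model. Concretely, Theorem~\ref{introthm:msa-defs-agree} produces such $X$, $(X,B)$, and $A$ with $R\simeq H^0(X\setminus A,\mathcal{O}_X)$; because $A$ is ample and effective on the normal projective variety $X$, the complement $X\setminus A$ is affine, so $U=\Spec(R)\cong X\setminus A$ sits in $X$ as an affine open with boundary $\Supp(A)$.

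For (1), I would set
\[
\Delta_{\eta,\epsilon}:=(1-\eta)B-\epsilon A
\]
for small positive rationals $\eta,\epsilon$ with $\epsilon$ small enough that $\Delta_{\eta,\epsilon}\ge 0$. One computes
\[
-(K_X+\Delta_{\eta,\epsilon})=-(K_X+B)+\eta B+\epsilon A\sim \eta B+\epsilon A,
\]
and openness of the ample cone makes the right-hand side ample once $\eta/\epsilon$ is sufficiently small. Since $(X,B)$ is cluster type, the boundary $B$ is reduced (simple poles of the volume form) and the pair is log canonical; scaling by $1-\eta<1$ drops every boundary coefficient strictly below $1$, subtracting the effective $\epsilon A$ only makes singularities milder, and a discrepancy computation that uses the fact that every lc center of $(X,B)$ is contained in $\Supp(B)$ shows $(X,\Delta_{\eta,\epsilon})$ is klt. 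This delivers the log Fano compactification of $U$.

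For (2), the hypothesis on $\Omega_U$ forces $\Supp(B)\subseteq X\setminus U=\Supp(A)$, so $-K_X\sim B$ is effective, big (as $B\ge A$ with $A$ ample), and supported away from $U$. Part~(1) exhibits $X$ as klt and of Fano type, so BCHM gives finite generation of the anticanonical ring $R(X,-K_X)$; I then take
\[
X':=\operatorname{Proj} R(X,-K_X),
\]
which is klt Fano by construction. Because the stable base locus of $-K_X$ lies in $\Supp(B)\subseteq X\setminus U$, the induced rational map $X\dashrightarrow X'$ is a morphism on $U$, and trivializing $-K_X$ on $U$ via the canonical section $s\in H^0(X,-K_X)$ with $\dv(s)=B$ identifies the distinguished affine open $D_+(s)\subseteq X'$ with $\Spec H^0(U,\mathcal{O}_U)=U$. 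Thus $U$ embeds as an affine open of the klt Fano variety $X'$.

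The $\qq$-factoriality addendum follows by first replacing $X$ with a small $\qq$-factorialization, which is an isomorphism over $U$ whenever $U$ is $\qq$-factorial, and noting that the constructions in~(1) and~(2) preserve $\qq$-factoriality. The main obstacle I foresee is the klt verification in~(1): specifically, showing that the perturbation $\eta B+\epsilon A$ witnesses every lc place of $(X,B)$, which reduces to the structural claim that every lc center of a cluster type pair is contained in $\Supp(B)$.
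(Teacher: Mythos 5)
Your overall strategy (perturb the boundary of the pair from Theorem~\ref{introthm:msa-defs-agree} to get a log Fano structure, then pass to the anticanonical/ample model of $B$ for the Fano statement) follows the same skeleton as the paper, but there is a genuine gap at the first step. Theorem~\ref{introthm:msa-defs-agree} only provides a normal projective $X$, a cluster type pair $(X,B)$ and an ample $0\leq A\leq B$; the only divisor known to be $\qq$-Cartier is $K_X+B$. Your perturbation $\Delta_{\eta,\epsilon}=(1-\eta)B-\epsilon A$ needs $B$ itself to be $\qq$-Cartier: otherwise $K_X+\Delta_{\eta,\epsilon}$ is not $\qq$-Cartier, $(X,\Delta_{\eta,\epsilon})$ is not a pair, and neither the ampleness of $\eta B+\epsilon A$ nor the discrepancy computation makes sense. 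The paper explicitly warns about this (``a priori, the divisor $B$ may not be ample, or not even $\qq$-Cartier'') and circumvents it: for (1) it does not argue on $X$ from the black-box statement at all, but uses the fact, established inside the proof of Theorem~\ref{thm:geom-msa} on a $\qq$-factorial dlt model via the klt log Calabi--Yau pair with big boundary criterion (Lemma~\ref{lem:FT-klt-CY}), that the compactification can be chosen of Fano type, and then applies Fano type $=$ log Fano (Proposition~\ref{prop:equiv-FT-logF}); for (2) it inserts a small $\qq$-factorialization before taking ample models of $B$. Patching your argument is not a one-liner: the cheaper perturbation $B-\epsilon A$ (whose log canonical divisor is $\qq$-Cartier) only yields an lc, not klt, pair because lc centers of $(X,B)$ inside $U$ survive, and passing to a dlt modification destroys the ampleness of $A$, so one is forced into the paper's route. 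Note also that the small $\qq$-factorialization you invoke in the addendum presupposes $X$ is of klt type, which again comes from the construction rather than the statement, and that both your part (2) and the addendum lean on part (1) for ``$X$ is klt and of Fano type,'' so the gap propagates.

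Two secondary points. The ``structural claim'' you single out as the main obstacle (every lc center of $(X,B)$ lies in $\Supp B$) is actually the easy part: an lc center not contained in $\Supp B$ has generic point in $X\setminus\Supp B\subseteq X\setminus A\simeq U$, where $B$ vanishes and $U$ is klt by hypothesis, a contradiction; so once the $\qq$-Cartier issue is resolved this is immediate. Your route for (2), taking ${\rm Proj}\,R(X,-K_X)$ and identifying $U$ with $D_+(s)$ via $\Gamma(X\setminus \Supp B,\mathcal{O}_X)=R$, is essentially the paper's ``ample model of $B\sim -K_X$'' (the $D_+(s)$ computation is a clean alternative to the paper's geometric tracking of $U$), but klt-ness of the output is asserted rather than proved (it follows since Fano type is preserved under birational contractions, Lemma~\ref{lem:FT-bir-cont}, and $-K$ pushes forward to the ample polarization), and the $\qq$-factorial addendum for (2) does not follow from ``the constructions preserve $\qq$-factoriality'': ample/anticanonical models are generally not $\qq$-factorial, which is why the paper treats that case separately.
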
 

The volume form $\Omega_U$ on the spectrum $U$ of a mutation semigroup algebra $R$ is introduced in Definition~\ref{def:MSA}. 
In the case of cluster algebras, this agrees with the volume form induced by one of the algebraic tori embedded in its spectrum.
The proof of both Theorem~\ref{introthm:msa-defs-agree} and Corollary~\ref{introcor:msa-Fano-comp} use some technical tools from birational geometry, the minimal model program, and Fano geometry. 
In view of Corollary~\ref{introcor:msa-Fano-comp}, it would be interesting to study explicit klt Fano compactifications of cluster algebras coming from combinatorics. 
On the other hand, one can seek optimal Fano compactifications of cluster algebras. 
For instance, many invariants of Fano varieties, such as the volume, singularities, etc, could lead to some special Fano compactification of the spectra of cluster algebras. In Section~\ref{sec:ex-ex}, we will show several explicit examples of Fano compactifications of cluster algebras. 
Furthermore, we show that compactifications that add only poles of $\Omega_U$ must be log Fano.

\begin{introcor}\label{introcor:nice-comp-FT}
Let $R$ be a mutation semigroup algebra 
and $U={\rm Spec}(R)$. 
Let $\Omega_U$ be the volume form of $U$.
Assume that $U$ is klt. 
Let $U\hookrightarrow X$ be a compactification for which
$\Omega_U$ has a pole at every prime divisor in $X\setminus U$. 
Then, the variety $X$ is log Fano.
In particular, it is a Mori dream space.
\end{introcor}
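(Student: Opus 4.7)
The plan is to equip $X$ with a log canonical big boundary $B_X$ satisfying $K_X+B_X\sim 0$, and then invoke a standard Fano type criterion from the minimal model program.

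\emph{Construction of $B_X$ and log canonicity.} Let $B_X$ denote the reduced pole divisor of $\Omega_U$ on $X$; in the cluster type setting, $\Omega_U$ is a rational section of $\omega_X$ with only simple poles and no zeros. The hypothesis of the corollary forces $\Supp(B_X)\supseteq X\setminus U$, while $\dv_X(\Omega_U)=-B_X$ yields $K_X+B_X\sim 0$. To verify log canonicity of $(X,B_X)$, I would invoke Theorem~\ref{introthm:msa-defs-agree} to produce a cluster type compactification $(X_0,B_0)$ of $U$ and take a common log resolution $f\colon Y\to X_0$, $g\colon Y\to X$ that is an isomorphism over $U$. Both log pullbacks $f^{*}(K_{X_0}+B_0)$ and $g^{*}(K_X+B_X)$ coincide on $Y$ with $K_Y+B_Y$, where $B_Y:=-\dv_Y(\Omega_U)$ is intrinsic to the volume form; since $(X_0,B_0)$ is lc by the cluster type hypothesis, the coefficients of $B_Y$ are at most one, and hence $(X,B_X)$ is lc as well.

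\emph{Bigness of $B_X$.} Let $E$ be the reduced divisor with support $X\setminus U$, so that $E\leq B_X$. Because $U$ is affine and $X$ is projective, one has
\[ R=H^{0}(U,\mathcal{O}_U)=\varinjlim_{m} H^{0}\bigl(X,\mathcal{O}_X(mE)\bigr). \]
Picking any finite set of generators of the $\kk$-algebra $R$ inside some $H^{0}(X,\mathcal{O}_X(ME))$, polynomial combinations of bounded degree $d$ fit inside $H^{0}(X,\mathcal{O}_X(dME))$; since $R$ has Krull dimension $\dim X$, the polynomial growth of $R$ forces $h^{0}(X,\mathcal{O}_X(mE))\gtrsim m^{\dim X}$, which is exactly the bigness of $E$. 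Thus $B_X$ is big.

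\emph{Fano type conclusion.} For $0<\epsilon\ll 1$, the pair $(X,(1-\epsilon)B_X)$ is klt (since $(X,B_X)$ is lc), and
\[ -\bigl(K_X+(1-\epsilon)B_X\bigr)\sim_{\qq}\epsilon B_X \]
is big. This is the standard input for the criterion showing $X$ to be of Fano type, equivalently log Fano; Birkar--Cascini--Hacon--McKernan then yields that $X$ is a Mori dream space. The main obstacle is the log canonicity check in the first step: one must carefully identify $B_Y$ on the common resolution, using that the volume form is intrinsic and that the cluster type pair $(X_0,B_0)$ provided by Theorem~\ref{introthm:msa-defs-agree} is lc; all remaining steps are routine applications of birational geometry.
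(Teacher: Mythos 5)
Your construction of $B_X$, the verification that $(X,B_X)$ is log canonical via the crepant comparison with the cluster type pair from Theorem~\ref{introthm:msa-defs-agree}, and the bigness of the boundary (your Hilbert-function growth argument is essentially the standard proof that the complement of an affine open supports a big divisor) are all fine. The genuine gap is in the final step. The inference ``$(X,(1-\epsilon)B_X)$ is klt since $(X,B_X)$ is lc'' is false as stated: scaling down an lc boundary yields a klt pair only when every non-klt place of $(X,B_X)$ has center contained in $\Supp(B_X)$. For example, the projective cone $X$ over an elliptic curve with $B$ a hyperplane section avoiding the vertex is an lc log Calabi--Yau pair with ample (hence big) boundary, yet no rescaling of $B$ is klt and $X$ is not of Fano type. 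In your setting this is exactly where the hypothesis that $U$ is klt must enter (since $X\setminus\Supp(B_X)$ sits inside the klt locus $U\setminus\Supp(B_U)$, every lc place of $(X,B_X)$ does have center in $\Supp(B_X)$), but you never invoke it there, so the step is unjustified. Moreover, even granting that, $K_X+(1-\epsilon)B_X$ is $\qq$-Cartier only if $B_X$ (equivalently $K_X$) is, and nothing in the hypotheses forces this: $X$ is an arbitrary normal compactification, not assumed $\qq$-Gorenstein or $\qq$-factorial, so the rescaled pair may not even be defined. Finally, the criterion you invoke at the end is also imprecise: ``klt with $-(K_X+\Delta)$ big'' does not imply Fano type without nefness; one needs the klt log Calabi--Yau structure with big boundary as in Lemma~\ref{lem:FT-klt-CY}, which your argument does not quite produce because of the two issues above.

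Repairing these points (passing to a dlt modification or $\qq$-factorialization to make sense of the perturbation, and then descending the Fano type property back to $X$) essentially forces the route the paper takes: Theorem~\ref{thm:geom-msa} supplies a Fano type cluster type compactification $(X_0,B_0)$ of $U$ crepant birational to $(X,B_X)$, one extracts lc places to reach a model $Y\to X_0$ with $Y\dashrightarrow X$ a birational contraction, and then the Fano type property is transferred by Lemmas~\ref{lem:FT-higher-model} and~\ref{lem:FT-bir-cont}, which is precisely the machinery that handles the lc (non-klt) boundary and the absence of $\qq$-Cartier hypotheses on $X$.
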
 

For $R=\kk[x_1^{\pm},\dots,x_n^{\pm}]$ the varieies $X$ appearing in Corollary~\ref{introcor:nice-comp-FT} are precisely complete toric varieties (see, e.g,~\cite[Theorem 1.10]{EFM24}).
The spectra of locally acyclic cluster algebras 
are known to  be finitely generated by \cite{Muller13acyclic} and to have canonical singularities
due to the work of Benito, Muller, Rajchgot, and Smith~\cite{BMRS15}.
Thus, from Corollary~\ref{introcor:msa-Fano-comp}.(2), we obtain the following statement about klt Fano compactifications of locally acyclic cluster algebras. 
\vspace{0.3cm}

\begin{introcor}~\label{introcor:comp-locally-acyclic}
Let $R$ be a locally acyclic cluster algebra and $U={\rm Spec}(R)$.
Then, the following statements hold:
\begin{enumerate}
\item $U$ admits a klt Fano compactification, and 
\item $U$ admits a canonical log Fano compactification.
\end{enumerate} 
\end{introcor}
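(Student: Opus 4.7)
The plan is to derive both statements from Corollary~\ref{introcor:msa-Fano-comp} applied to $R$, after checking that the hypotheses are satisfied. Since every cluster algebra is, in particular, a mutation semigroup algebra, the ring $R$ falls within the scope of that corollary. Finite generation of $R$ is the theorem of Muller~\cite{Muller13acyclic}, and Benito--Muller--Rajchgot--Smith~\cite{BMRS15} show that $U=\Spec(R)$ has canonical singularities, which in particular are klt. Hence both bullets of Corollary~\ref{introcor:msa-Fano-comp} will apply once the appropriate volume form hypothesis is verified for part (1).

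For part (1), the crucial extra ingredient is that the volume form $\Omega_U$ introduced in Definition~\ref{def:MSA} has no poles on $U$ in the cluster algebra case. On each cluster chart $\TT_v \subset U$, the form $\Omega_U$ restricts to the standard toric form $\tfrac{dx_1}{x_1}\wedge\cdots\wedge \tfrac{dx_n}{x_n}$, which is nowhere vanishing on $\TT_v$; since cluster mutations preserve this form up to sign and the cluster tori cover $U$ away from a locus of codimension $\ge 2$, the form $\Omega_U$ is regular and nowhere vanishing on all of $U$. Applying Corollary~\ref{introcor:msa-Fano-comp}(2) then yields a klt Fano compactification of $U$, and preserves $\qq$-factoriality if desired.

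For part (2), I would instead invoke Corollary~\ref{introcor:msa-Fano-comp}(1) to obtain a log Fano compactification $U\hookrightarrow X$ together with an effective boundary $\Delta$ making $(X,\Delta)$ klt and $-(K_X+\Delta)$ ample. The remaining task is to upgrade the singularities of the underlying variety $X$ from klt to canonical. Because $U\hookrightarrow X$ is an open immersion and $U$ itself has canonical singularities by~\cite{BMRS15}, the variety $X$ already has canonical singularities at every point of $U$. The only possible obstruction lies on $X\setminus U$, which is contained in the support of the cluster type boundary; here one can either modify $X$ by a $K_X$-MMP that is automatically a sequence of log Fano contractions/flips (using that $(X,\Delta)$ is klt log Fano, hence a Mori dream space), or alternatively choose the compactification in Theorem~\ref{introthm:msa-defs-agree} so that the boundary divisor $A$ is taken with small enough coefficients to enforce canonical singularities on $X$.

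The main obstacle I expect is precisely this last step: controlling the singularities of $X$ along the added boundary so that the compactification is globally canonical, and not merely klt, while retaining the log Fano property. This is where the distinction between parts (1) and (2) of Corollary~\ref{introcor:comp-locally-acyclic} really lives, and it requires exploiting the canonical (not merely klt) singularities of $U$ provided by~\cite{BMRS15} together with the flexibility in choosing the compactification of Corollary~\ref{introcor:msa-Fano-comp}.
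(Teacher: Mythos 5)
Your part (1) is essentially the paper's argument: finite generation is Muller's theorem, \cite{BMRS15} gives canonical (hence klt) singularities, the volume form is regular and nowhere vanishing on the cluster tori which cover $U$ up to codimension two, and Corollary~\ref{introcor:msa-Fano-comp}.(2) applies. (One pedantic point: to place $R$ in the scope of that corollary you need it to be a mutation (semigroup) algebra, which in the paper comes from Proposition~\ref{prop:cluster are MSA} under a maximal-rank, primitive seed hypothesis, or equivalently from exhibiting the codimension-one torus embedding with nowhere vanishing volume form, as you do.) Part (2), however, has a genuine gap, and it is exactly the one you flag. Neither of your proposed fixes can work. Running a $K_X$-MMP only contracts divisors or flips curves; it never extracts divisors, so it cannot remove a klt, non-canonical point, and a $K_X$-negative divisorial contraction could even contract a divisor meeting $U$, destroying the compactification property. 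Shrinking the coefficients of the ample divisor $A$ in Theorem~\ref{introthm:msa-defs-agree} is irrelevant: the singularities of the underlying variety $X$ do not depend on $A$ at all, and the klt Fano compactification of (1) can genuinely fail to be canonical --- see the surfaces $X(a,b)$ of Subsection~\ref{ex:affinisation-rank-2-cluster}, which have a non-canonical cyclic quotient point, while the canonical log Fano compactification is a \emph{different} model $X_0(a,b)$ lying over $X(a,b)$. The paper even warns that a canonical \emph{Fano} compactification need not exist (boundedness of canonical Fanos bounds the Picard rank), so one must change the birational model, not the boundary coefficients.

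What the paper actually does for (2): start from the klt Fano compactification $X$ of (1) together with its cluster type log Calabi--Yau pair $(X,B)$ with $U=X\setminus B$ and $K_X+B\sim 0$, and take a $\qq$-factorial dlt modification $p\colon Y\to X$ of $(X,B)$. Since $X$ is klt, every log canonical place of $(X,B)$ has center inside $\supp(B)$, so $p$ is an isomorphism over $U$ and $U\simeq Y\setminus B_Y$, where $K_Y+B_Y=p^*(K_X+B)\sim 0$ with $B_Y$ effective and fully supporting an ample divisor; $Y$ is log Fano because it is obtained by extracting only log canonical places of a log Calabi--Yau structure on a Fano type variety (Lemma~\ref{lem:FT-higher-model}). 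Canonicity of $Y$ is then forced by the index-one dlt structure: if $a_E(Y)<1$ for some divisorial valuation $E$, then $a_E(Y,B_Y)\leq a_E(Y)<1$ is a non-negative integer (as $(Y,B_Y)$ is lc and $K_Y+B_Y$ is Cartier), hence $a_E(Y,B_Y)=0$, so $c_Y(E)$ is a log canonical center of the dlt pair $(Y,B_Y)$; but $Y$ is smooth at the generic point of such a center, giving $a_E(Y)\geq 1$, a contradiction. This extraction-plus-index-one argument is the missing step in your proposal.
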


Let us emphasize that the two compactifications may differ. 
Indeed, the compactification considered in Corollary~\ref{introcor:comp-locally-acyclic}.(1) may not have canonical singularities. 
Indeed, canonical Fano varieties of a fixed dimension are bounded~\cite{Bir21}. Therefore, $n$-dimensional locally acyclic cluster algebras admitting canonical Fano compactifications must have an upper bound for the rank of their class groups. In Subsection~\ref{ex:affinisation-rank-2-cluster}, we make explicit the difference between both compactifications.

\subsection{Cox rings}\label{subsec:CR}
The Cox ring of an algebraic variety is a generalization 
of the concept of homogeneous coordinate rings for toric varieties. See Definition~\ref{def:CR} for the notion of Cox rings.
A variety whose Cox ring is finitely generated is known as a Mori dream space~\cite{HK00}. 
Mildly singular Fano varieties are known to be Mori dream spaces~\cite{BCHM10}. 
Theorem~\ref{introthm:msa-defs-agree} and Corollary~\ref{introcor:msa-Fano-comp} imply that cluster type Fano varieties are Fano compactifications of the spectra of mutation semigroup algebras.
Following the philosophy of Francone, initiated in~\cite{Francone2023minimal,Francone2024Cox}, we expect that the Cox ring of a Fano cluster type variety has a structure similar to that of a mutation semigroup algebra.
Our next theorem states that for a klt Fano variety $X$ the cluster type condition of $X$ is equivalent to a graded version of the mutation semigroup algebra condition on its Cox ring ${\rm Cox}(X)$. 

\begin{introthm}\label{introthm:cluster-type-Cox-rings}
Let $X$ be a $\qq$-factorial klt Fano variety. Then $X$ is of cluster type if and only if
${\rm Cox}(X)$ is a ${\rm Cl}(X)$-graded mutation semigroup algebra. 
\end{introthm}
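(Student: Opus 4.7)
The plan is to prove both implications by passing to the total coordinate space $\widehat{X} := \Spec(\Cox(X))$ and invoking Theorem~\ref{introthm:msa-defs-agree}. Since $X$ is a $\qq$-factorial klt Fano variety, it is a Mori dream space by \cite{BCHM10}, so $\Cox(X)$ is finitely generated, and $\widehat{X}$ carries a natural $T_{\Cl(X)}$-action whose GIT quotient recovers $X$. Standard results on characteristic spaces, combined with the klt Fano hypothesis on $X$, imply that $\widehat{X}$ has klt singularities, so Theorem~\ref{introthm:msa-defs-agree} applies to $\Cox(X)$ and to any suitable compactification of $\widehat{X}$.

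For the forward direction, I would suppose $X$ is of cluster type, with open torus $T \subset X$ whose complement $B$ supports the (no-zero) polar divisor of $\Omega_T$. Since $\Pic(T) = 0$, the preimage $\widehat{T}$ of $T$ in $\widehat{X}$ trivializes as $T \times T_{\Cl(X)}$, which is itself an algebraic torus, and the form $\Omega_{\widehat{T}}$ is (up to scalar) $\Omega_T \wedge \Omega_{T_{\Cl(X)}}$; this has no zeros on $\widehat{X}$ because $\Omega_T$ has no zeros on $X$ and the second factor has no zeros on the $T_{\Cl(X)}$-orbits. Next I would pick a $T_{\Cl(X)}$-equivariant normal projective compactification $\widehat{X} \hookrightarrow Y$ lying over a chosen cluster type compactification of $X$, so that $(Y, B_Y)$ is cluster type. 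An ample divisor $0 \le A \le B_Y$ is then extracted so that $H^0(Y \setminus A, \mathcal{O}_Y) \simeq \Cox(X)$, and Theorem~\ref{introthm:msa-defs-agree} equips $\Cox(X)$ with its MSA structure; the $\Cl(X)$-grading is automatic from the equivariant $T_{\Cl(X)}$-action on $Y$.

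For the backward direction, assume $\Cox(X)$ is a $\Cl(X)$-graded mutation semigroup algebra. Theorem~\ref{introthm:msa-defs-agree} produces a cluster type pair $(Y, B_Y)$ and an ample $0 \le A \le B_Y$ with $\Cox(X) \simeq H^0(Y \setminus A, \mathcal{O}_Y)$. The grading encodes a $T_{\Cl(X)}$-action on $\widehat{X}$, and I would use the grading compatibility — together with a standard equivariantization — to arrange that this action extends to $Y$ preserving both the torus $T_Y$ and $B_Y$. Passing to GIT quotients, the torus $T_Y$ descends to an open torus $T_Y / T_{\Cl(X)}$ inside $X = \widehat{X}^{\mathrm{ss}} /\!/ T_{\Cl(X)}$, and the descent of $\Omega_{T_Y}$ has no zeros on $X$ because $\Omega_{T_Y}$ had none on $Y$. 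This exhibits $X$ as a cluster type variety.

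The principal obstacle is to secure the compatibility between the MSA structure on $\Cox(X)$ and the $\Cl(X)$-grading at the level of the projective compactification: namely, that $Y$ can be taken $T_{\Cl(X)}$-equivariant in the forward direction, and that in the backward direction the cluster type data on $Y$ descend intact through the GIT quotient. I expect this to be handled by a suitable adaptation of the monomial lifting and descent techniques of Francone~\cite{Francone2023minimal,Francone2024Cox}, extended from the cluster case to the mutation semigroup setting, together with standard descent results for log pairs through good quotients of Mori dream spaces.
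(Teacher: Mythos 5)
Your overall architecture is the same as the paper's: both directions are handled by passing to the Cox space $\bar X=\Spec(\Cox(X))$, establishing that it is a klt affine variety (via \cite{GOST15}, and \cite{Bra19} for Gorenstein-ness), and then invoking the geometric characterization of mutation semigroup algebras; in fact the paper applies the \emph{equivariant} refinement of that characterization (Corollary~\ref{cor:geom-msa-equiv}) rather than Theorem~\ref{introthm:msa-defs-agree} itself, since the $\Cl(X)$-grading must correspond to a $\mathbb{T}$-equivariant torus embedding. The genuine gap in your proposal is the step you dispose of in one line in each direction: transferring the ``volume form has no zeros'' condition between $X$ and $\bar X$. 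The characteristic space $\hat X\to X$ is an \'etale $\mathbb{T}$-torsor only over the smooth locus, so the assertions ``$\Omega_{\widehat T}=\Omega_T\wedge\Omega_{\mathbb{T}}$ has no zeros on $\widehat X$ because $\Omega_T$ has none on $X$'' and ``the descent of $\Omega_{T_Y}$ has no zeros on $X$ because it had none upstairs'' are precisely the claims requiring proof: along $\mathbb{T}$-invariant divisors lying over components of $B$ or over the non-smooth locus the quotient can acquire multiplicities, and a priori the polar divisor could develop components of non-positive coefficient (i.e.\ zeros) on one side. This is the technical heart of the paper's proof: it passes to the normalized Chow quotient $Y$ of the $\mathbb{T}$-action, chooses a $\mathbb{T}$-equivariant model $\widetilde X\to\bar X$ with good quotient onto $Y$, applies adjunction to a log canonical center mapping birationally to $Y$, the negativity lemma, and the coefficient bound $\tfrac{1-m}{m}+\tfrac{\coeff_Q(B_{\widetilde X})}{m}\ge 0$ (with $m$ the relevant multiplicity) to show the descended boundary $B_X$ is effective in one direction, and in the other direction argues that every component of $B_{\widetilde X}$ with non-positive coefficient is contracted in $\bar X$ because the corresponding components of $B_Y$ are contracted in $X$. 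Nothing in your proposal replaces this computation.

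A second, related gap is the grading. You propose to apply Theorem~\ref{introthm:msa-defs-agree} and then repair equivariance afterwards by ``monomial lifting and descent techniques''; the paper instead proves the graded statement directly by observing that every construction in the proof of Theorem~\ref{thm:geom-msa} (resolutions, the MMP runs, Gruson--Raynaud flattening) can be performed $\mathbb{T}$-equivariantly, yielding Corollary~\ref{cor:geom-msa-equiv}. Without some such statement neither direction closes: in the forward direction you must produce a $\Cl(X)$-\emph{graded} MSA structure, and in the backward direction the torus $\mathbb{G}_m^{n+\rho}\hookrightarrow\bar X$ furnished by the MSA hypothesis must be $\mathbb{T}$-equivariant before it can descend (after removing the irrelevant locus, in codimension one) to a torus in $X$. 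Finally, your forward-direction detour through a projective compactification $Y$ of $\widehat X$ with $H^0(Y\setminus A,\mathcal O_Y)\simeq\Cox(X)$ is unnecessary and adds an unproved matching step: the paper applies the affine equivalence (1)$\Leftrightarrow$(2) of Theorem~\ref{thm:geom-msa} directly to the klt affine variety $\bar X$.
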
 

It is a classic fact that cluster varieties are covered by 
algebraic tori up to a closed subset of codimension two.
In particular, the Cox space of a cluster variety is also covered by algebraic tori up to a closed subset of codimension two. 
It is natural to wonder whether this property also holds for cluster type Fano varieties. 
For cluster type Fano varieties, this condition is known to hold in the complement of the poles of the volume form~\cite[Theorem 1.3]{EFM24}.
In this direction, we prove that the Cox space of a cluster type Fano variety is covered by algebraic tori up to a closed subset of codimension two.

\begin{introthm}\label{introthm:Cox-space-covered-algebraic-torus}
Let $X$ be a $\qq$-factorial klt Fano variety of dimension $n$
and Picard rank $\rho$. 
If $X$ is of cluster type, then its Cox space $\overline{X}$ 
is covered by copies of $\mathbb{G}_m^{n+\rho}$ up
to a closed subset of codimension at least two. 
\end{introthm}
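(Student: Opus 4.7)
The plan is to reduce the claim to the covering result for cluster type Fano varieties from \cite{EFM24} after producing a suitable projective model of $\overline{X}$, then to use the mutation structure of the Cox ring to handle any leftover boundary divisors.

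First, Theorem~\ref{introthm:cluster-type-Cox-rings} converts the cluster type hypothesis on $X$ into the statement that $R := \Cox(X)$ is a $\Cl(X)$-graded mutation semigroup algebra. Hence $\overline{X} = \Spec R$ is the spectrum of an MSA, and by general theory of Cox rings for $\qq$-factorial Mori dream spaces we have $\dim \overline{X} = n + \rho$. The klt hypothesis on $X$ transfers to $\overline{X}$ via the standard dictionary between singularities of a Fano Mori dream space and those of its Cox ring, so we are in a setting to which Corollary~\ref{introcor:msa-Fano-comp} applies.

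Applying that corollary to $\overline{X}$ yields a $\qq$-factorial log Fano compactification $\overline{X} \hookrightarrow Y$. Theorem~\ref{introthm:msa-defs-agree} then realizes this as a cluster type pair $(Y,B)$ with $\overline{X} = Y \setminus A$ for some ample divisor $0 \leq A \leq B$. Invoking \cite[Theorem~1.3]{EFM24} on the cluster type variety $Y$ of dimension $n+\rho$ produces tori $T_\beta \cong \G_m^{n+\rho}$ covering $Y \setminus B$ up to a closed subset of codimension at least two. Since $Y \setminus B \subseteq Y \setminus A = \overline{X}$, all of these tori sit inside $\overline{X}$ and provide the desired charts away from the pole locus of the volume form $\Omega_Y$.

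It remains to cover the prime divisors $D \subset \overline{X}$ lying in $B \setminus A$: these are precisely the divisors of $\overline{X}$ along which $\Omega_Y$ has a pole but which persist in $\overline{X} = Y \setminus A$. For each such $D$ the plan is to invoke the mutation structure on $R$ directly: mutating the initial seed in a direction adapted to the equation cutting out $D$ should produce a new seed $\Sigma_D$ whose open torus chart $T_{\Sigma_D} \cong \G_m^{n+\rho}$ in $\overline{X}$ contains $D$ in its interior. Combining these with the tori $T_\beta$ above then exhausts $\overline{X}$ up to codimension two. The main obstacle is precisely this last step: verifying that the mutations of $R$ are rich enough to realize every codimension-one divisor $D$ in $(B \setminus A) \cap \overline{X}$ as interior to some mutated seed torus. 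This is a Laurent-phenomenon–style statement for mutation semigroup algebras, and I expect its proof to hinge on a careful analysis of the mutation graph together with the cluster type boundary $B$ and the klt Fano hypothesis on $X$.
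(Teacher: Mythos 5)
Your first half is fine and is a legitimate variant of the paper's argument: the paper covers $X\setminus B$ by tori via \cite[Theorem 1.3]{EFM24} and lifts these charts to $\overline{X}$ through the \'etale quasi-torus bundle $\hat{X}\rightarrow X^{\rm sm}$ (using \cite{ADHL15} and \cite{BM24}), whereas you compactify $\overline{X}$ itself via Theorem~\ref{introthm:cluster-type-Cox-rings}, Corollary~\ref{introcor:msa-Fano-comp} and Theorem~\ref{introthm:msa-defs-agree} and apply \cite[Theorem 1.3]{EFM24} upstairs; either way one covers, up to codimension two, exactly the locus of $\overline{X}$ where the volume form has no poles. The problem is the step you defer to the end, which is not a technical verification you have postponed but the actual content of the theorem, and the mechanism you propose for it cannot work.

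Concretely: a mutation in the sense of Definition~\ref{def:mutation} acts on monomials by $\mu^*(x^m)=x^m h^{-\langle u,m\rangle}$ with $h\in\kk[u^{\perp}\cap M]$, and any such map preserves the standard torus volume form (choosing a basis $m_1,\dots,m_{N-1}$ of $u^{\perp}\cap M$ and $m_N$ with $\langle u,m_N\rangle=1$, the correction term $dh/h$ dies upon wedging). Hence every torus chart reachable from the initial seed by mutations carries, up to sign, the same form $\Omega$, namely the restriction of the volume form of $\overline{X}$, and $\Omega$ has neither zeros nor poles on any of these tori. But each of your leftover divisors $D\subset(B\setminus A)\cap\overline{X}$ is by definition a pole divisor of $\Omega$, so its generic point can lie in \emph{no} mutation-equivalent torus chart: the divisors you leave to the "Laurent-phenomenon--style" step are precisely the ones that seed mutation can never reach. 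To cover them one must use tori that do not preserve $\Omega$, and this is where the paper's proof does its real work: for each boundary component $P$ of the cluster type pair on $X$ it produces, via Lemma~\ref{lem:toric-model-cluster-type}, a toric model in which $P$ stays divisorial and no other divisor of $X$ degenerates onto it; Lemma~\ref{lem:torus-in-toric} then supplies a translated (non-volume-preserving) torus through the generic point of $P_T$; extracting the remaining divisors and running an MMP yields a small $\qq$-factorial modification $X'$ of $X$ with the same Cox space and a torus chart through the generic point of $P'$; finally this chart is lifted to $\overline{X}$ by the quasi-torus bundle argument. Without an argument of this kind (or some genuine substitute), your proposal is incomplete at its central point.
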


The proof of Theorem~\ref{introthm:Cox-space-covered-algebraic-torus} shows that any cluster type surface is indeed covered by copies of $\mathbb{G}_m^2$ up to finitely many points.

\subsection{Projective varieties from Lie theory}\label{subsec:comb}
In recent years, there has been considerable interest in finding cluster algebra structures on the coordinate rings and Cox rings of varieties arising from Lie theory, such as Grassmannians, flag varieties, Schubert varieties, Richardson varieties, and braid varieties.
The literature in this direction is extensive (see, e.g.,~\cite{Sco06, geiss08partial, geiss2011kac, SSBW19,MR20,GL23, GLSB23, CGGLSS25}).
In this subsection, we explain that many projective varieties from Lie theory are indeed cluster type varieties.
The following theorem is a recollection from results due to Knutson, Lam, and Speyer~\cite{KLS14}, Brion~\cite{Bri02}, and Kumar~\cite{BK05}.

\begin{introthm}\label{introthm:brick-richardson}
Let $\mathring{X}^w_v$ be an open Richardson variety.
Then, there exists a brick manifold
compactification $\mathring{X}^w_v\hookrightarrow \widetilde{X}^w_v$ 
satisfying the following conditions:
\begin{enumerate}
\item the variety $\widetilde{X}^w_v$ is smooth; 
\item the divisor $\widetilde{B}^w_v:=\widetilde{X}^w_v\setminus \mathring{X}^w_v$ is anti-canonical; 
\item the divisor $\widetilde{B}^w_v$ supports an effective ample divisor; and 
\item the divisor $\widetilde{B}^w_v$ is simple normal crossing.
\end{enumerate} 
In particular, the brick  manifold compactification of an open Richardson variety is a log Fano cluster type variety.
\end{introthm}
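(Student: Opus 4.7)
The plan is to realize $\widetilde{X}^w_v$ as a brick manifold in the sense of Escobar, associated to a reduced expression $\mathbf{w}=s_{i_1}\cdots s_{i_\ell}$ for $w$ together with a distinguished positive subexpression realizing $v\le w$. In this construction, $\widetilde{X}^w_v$ sits inside the Bott--Samelson variety $Z_{\mathbf{w}}$ and inherits the iterated $\mathbb{P}^1$-bundle structure along prescribed sections; this yields smoothness (1) at once. The boundary $\widetilde{B}^w_v=\widetilde{X}^w_v\setminus\mathring{X}^w_v$ then decomposes as a sum of divisors indexed by the letters of $\mathbf{w}$ (the ``bricks''), each a brick manifold of one lower dimension. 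Transversality of these divisors is encoded in the fact that the underlying subword complex is, by Knutson--Miller, a shellable simplicial sphere, which packages the required simple normal crossing condition (4).

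For (2), I would combine two inputs. Kumar's canonical bundle formula for Bott--Samelson varieties~\cite{BK05} writes $-K_{Z_{\mathbf{w}}}$ as the sum of all section divisors plus a pullback of a strictly dominant line bundle from $G/B$. Brion's Frobenius-splitting analysis of Richardson varieties~\cite{Bri02}, together with Knutson--Lam--Speyer's description~\cite{KLS14} of the projection $\widetilde{X}^w_v \to Z_{\mathbf{w}} \to G/B$ factoring through the Richardson closure $\overline{X}^w_v$, identifies the residual pullback contribution, after restriction to $\widetilde{X}^w_v$, with the subexpression half of the brick divisors; the remaining half assembles into $\widetilde{B}^w_v$, giving $-K_{\widetilde{X}^w_v}\sim \widetilde{B}^w_v$. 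For (3), pulling back along this same map an ample line bundle on $G/B$ associated to a regular dominant weight produces a semiample effective divisor whose unstable locus lies inside $\widetilde{B}^w_v$; perturbing it by a small positive combination of brick divisors that are fiberwise ample in the $\mathbb{P}^1$-bundle tower upgrades it to an effective ample $0\le A\le \widetilde{B}^w_v$.

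With (1)--(4) in hand, the ``in particular'' clause is formal. For small rational $\epsilon>0$, one has $-K_{\widetilde{X}^w_v}-(\widetilde{B}^w_v-\epsilon A)=\epsilon A$ ample, while $(\widetilde{X}^w_v,\widetilde{B}^w_v-\epsilon A)$ is klt since its support is SNC and all coefficients lie in $[0,1)$, proving $\widetilde{X}^w_v$ is log Fano. Cluster type-ness of the pair then reduces to the known cluster algebra structure on $\mathring{X}^w_v$ together with the fact that the Marsh--Rietsch volume form extends across $\widetilde{B}^w_v$ with no zeros, which is precisely the content of (2). The main technical obstacle I expect is step (2): the cancellation between Kumar's pullback term and the subexpression brick divisors must be tracked letter by letter, since one has to distinguish the boundary of the ambient Bott--Samelson from that of the brick submanifold and verify that the identification $-K_{\widetilde{X}^w_v}\sim \widetilde{B}^w_v$ holds without a torsion twist; Brion's splitting delivers the linear equivalence, and the reducedness of $\widetilde{B}^w_v$ combined with the SNC condition eliminates any residual ambiguity.
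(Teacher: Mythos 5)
Your overall strategy coincides with the paper's: take the Knutson--Lam--Speyer brick compactification, establish that the boundary is anticanonical, SNC, and supports an effective ample divisor, and then deduce the log Fano and cluster type statements formally. The difference is that the paper obtains items (1)--(4) by citation --- smoothness and SNC of $\partial Z^v_u$ from Brion's lectures, $[\partial Z^v_u]=-K_{Z^v_u}$ from \cite[Lemma A.2]{KLS14}, and the ample divisor supported on the boundary from \cite[Lemma A.6]{KLS14} --- whereas you propose to re-derive them, and the re-derivations as sketched do not hold up. Concretely: the brick variety is \emph{not} an iterated $\mathbb{P}^1$-bundle; it is cut out inside the Bott--Samelson tower by the additional condition $g_{r+s+1}B=w_0B$ (equivalently it is a fiber of the last projection), so smoothness does not come ``at once'' and requires a genuine transversality/Frobenius-splitting argument. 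Likewise, shellability of the subword complex is a statement about the combinatorics of boundary strata and does not ``package'' simple normal crossings, which is a local geometric condition. For (2) you yourself flag the letter-by-letter cancellation between Kumar's formula and the subexpression divisors as unverified --- that cancellation \emph{is} the content of \cite[Lemma A.2]{KLS14}, so as written the key step is assumed rather than proved. For (3), note that if you pull back an ample class along the projection exhibiting the brick as a fiber over $G/B$ you get the trivial bundle; the relevant projection is onto the middle coordinate, landing in the Richardson variety, and its pullback is only big and semiample. Upgrading it to an effective \emph{ample} divisor supported on $\widetilde{B}^w_v$ by adding ``fiberwise ample'' boundary components is exactly the nontrivial assertion of \cite[Lemma A.6]{KLS14}, not a perturbation one can wave through.

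For the ``in particular'' clause, the log Fano part is fine provided the ample divisor is fully supported on $\widetilde{B}^w_v$ (otherwise $\widetilde{B}^w_v-\epsilon A$ retains coefficient-one components and you only get an lc pair, which then needs the standard perturbation to reach a klt one). The cluster type part, however, is not ``precisely the content of (2)'': knowing $-K\sim\widetilde{B}^w_v$ gives a linear equivalence of classes, not that the divisor of the specific torus volume form equals $-\widetilde{B}^w_v$. You need first that $\Omega$ has no zeros or poles on $\mathring{X}^w_v$ itself (this uses the cluster/torus structure of the open Richardson, or, as in the paper, the fact that $(X^v_u,\partial X^v_u)$ is log Calabi--Yau, proved by Brion's anticanonical computation, normality from \cite{KLS14}, induction on the boundary stratification and inversion of adjunction), and only then does an argument on the compactification (e.g.\ pole order at most one along the SNC boundary plus triviality of effective divisors linearly equivalent to zero, or the paper's crepancy with the Richardson pair) force $\mathrm{div}(\Omega)=-\widetilde{B}^w_v$. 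As written, this step is a genuine gap in your proposal.
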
 

Open Richardson varieties are known to be spectra of locally acyclic cluster algebras (see~\cite{MR20}). Thus, the previous theorem can be thought of as a special case of Corollary~\ref{introcor:comp-locally-acyclic}.(2). 
As a consequence of Theorem~\ref{introthm:brick-richardson}, we will deduce that many examples of varieties from Lie theory
are log Fano and cluster type.

\begin{introcor}\label{introcor:var-from-comb-ct}
The following classes of varieties: 
\begin{enumerate}
\item Brick manifold compactifications of open Richardson varieties;
\item Richardson varieties;
\item Flag varieties;
\item Schubert varieties; and 
\item Bott-Samelson varieties;
\end{enumerate} 
are log Fano and cluster type.
\end{introcor}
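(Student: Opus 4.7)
The plan is to obtain item (1) directly from Theorem~\ref{introthm:brick-richardson} and to derive items (2)--(5) by exhibiting in each variety a dense open Richardson cell, whose cluster algebra structure supplies the torus charts demanded by the cluster type condition, and by producing a complementary effective boundary supporting an ample anti-canonical divisor with at worst klt singularities. Two elementary observations collapse the list: every Bott-Samelson variety for a reduced word arises as a brick manifold in Escobar's construction, so (5) reduces to (1); and Schubert varieties are Richardson varieties with $v=e$, so (4) reduces to (2).

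For item (3), the flag variety $G/B$ contains the big Bruhat cell $\mathring{X}^{w_0}_e$ as a dense open with reduced complement $\partial = \sum_i D_i$ equal to the sum of the codimension-one Schubert divisors, and a standard character computation shows that $\partial$ supports $-K_{G/B}$, so the pair $(G/B, \partial)$ is log Fano. The cluster algebra structure on $\mathring{X}^{w_0}_e$ (Leclerc, Geiss--Leclerc--Schr\"oer) then produces the torus charts covering up to codimension two that the cluster type condition requires. For item (2), an open Richardson $\mathring{X}^w_v$ sits as a dense open in $X^w_v$ with complement a union of strictly smaller Richardson strata. By Brion--Kumar \cite{Bri02, BK05}, $X^w_v$ has canonical (hence klt) singularities; by Knutson--Lam--Speyer \cite{KLS14}, the boundary strata are described combinatorially; and by Muller--Speyer \cite{MR20}, the open Richardson is the spectrum of a locally acyclic cluster algebra. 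Combining these inputs with Corollary~\ref{introcor:comp-locally-acyclic} produces both a klt Fano and a canonical log Fano compactification of $\mathring{X}^w_v$; matching them with the natural Schubert-stratified boundary of $X^w_v$ itself yields the cluster type and log Fano conclusions.

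The main obstacle is the transfer of the cluster type structure from the brick manifold resolution $\widetilde{X}^w_v \to X^w_v$ to the (possibly singular) Richardson $X^w_v$ itself. Specifically, one must match the push-forward of the anti-canonical boundary of $\widetilde{X}^w_v$ with the Schubert-stratified boundary of $X^w_v$, verify that the canonical Richardson singularities are compatible with the required log Fano pair structure, and extend the torus coverage of $\widetilde{X}^w_v$ (up to codimension two) to a torus coverage of $X^w_v$ itself. These technical points rest on Brion--Kumar's canonicity theorem together with a careful analysis of how the brick manifold's contracted fibers land on Richardson strata, after which the cluster type property follows from the codimension-two nature of the discrepancy between the two pairs.
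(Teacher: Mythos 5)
Your reductions ((5) to brick manifolds, (4) to Richardson varieties, (1) from Theorem~\ref{introthm:brick-richardson}) agree with the paper, which simply notes that every variety on the list is isomorphic to some $X^v_u$ or $Z^v_u$. The genuine gap is in item (2), which is where all the content lies. Corollary~\ref{introcor:comp-locally-acyclic} only produces \emph{some} klt Fano (resp.\ canonical log Fano) compactification of $\mathring{X}^v_u$, built abstractly through the MMP in the proof of Theorem~\ref{thm:geom-msa}; it gives no control that this compactification is the Richardson variety $X^v_u$ (or the brick manifold) with its Schubert-stratified boundary. You identify exactly this transfer as ``the main obstacle'' and then dispose of it by appeal to a ``canonicity theorem'' and ``a careful analysis'' that is never carried out, so the step that the statement actually asks for is missing. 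Moreover, the claim that $X^v_u$ has canonical singularities is not even well posed: Richardson varieties need not be $\qq$-Gorenstein, so one must work with a pair; the statement available in the literature (Kumar--Schwede \cite{KS14}) is precisely the log Fano assertion you are trying to prove, not an input you may quote to produce it via abstract compactifications. A smaller slip occurs in (3): the complement of the big Bruhat cell consists only of the $B$-stable Schubert divisors, whose class is $\rho$, while the cluster type boundary of $G/B$ is the union of the Schubert \emph{and opposite} Schubert divisors, of class $2\rho$, with coefficient one on each; in any case (3) is subsumed by (2) with $u=e$, $v=w_0$.

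The paper closes the gap you left open directly, and differently from your outline. Theorem~\ref{thm:cluster-type-x^v_u} shows that $(X^v_u,\partial X^v_u)$ is itself a cluster type (in particular log Calabi--Yau, log canonical) pair: the boundary is anticanonical by Brion, normality and irreducibility of Richardson varieties come from \cite{KLS14}, and log canonicity is proved by induction on dimension along the stratification \eqref{eq:bruhat-order} together with inversion of adjunction. The log Fano property is then obtained not from Corollary~\ref{introcor:comp-locally-acyclic} but from the crepant log resolution $(Z^v_u,\partial Z^v_u)\rightarrow (X^v_u,\partial X^v_u)$ of Theorem~\ref{thm:crepant-resol}: since $\partial Z^v_u$ supports an effective ample divisor (\cite[Lemma A.6]{KLS14}), the brick manifold $Z^v_u$ is of Fano type, and $X^v_u$ inherits the Fano type property as the image of a birational contraction (Lemma~\ref{lem:FT-bir-cont}); cluster type for $Z^v_u$ follows because its interior coincides with that of $X^v_u$. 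If you wish to keep your structure, the ``matching'' paragraph must be replaced by an argument of this kind; as written it is the proof.
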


It is worth mentioning that the fact that Bott-Samelson varieties and Schubert varieties are log Fano was proved by Anderson and Stapledon~\cite{AS14}. Furthermore, the fact that Richardson varieties are log Fano was proved by Schwede and Kumar~\cite{KS14}. 

The article is organized as follows. 
In Section~\ref{sec:prel}, we will recall all the definitions needed to understand the main theorems of the article and the examples in Section~\ref{sec:ex-ex}. These two sections are aimed at a more general audience, and we avoid technical tools as much as possible.
In Section~\ref{sec:ex-ex}, we give several explicit examples 
of Theorem~\ref{introthm:msa-defs-agree}
and Theorem~\ref{introthm:cluster-type-Cox-rings}
related to Schubert varieties (Subsection~\ref{subsec:schubert}), Richardson varieties (Subsection~\ref{subsec:richardson}), resolutions
of cubic surfaces (Subsection~\ref{subsec:cubic}), and blow-ups of the projective space (Subsection~\ref{subsec:blow-up-pn}), among others. 
In Section~\ref{sec:more-prel}, we give more preliminary results regarding the geometry of Fano varieties, the minimal model program, and birational geometry. 
In Section~\ref{sec:comb}, based on previous works, we prove that many varieties from Lie theory are of cluster type.
In Section~\ref{sec:geom}, we prove the results regarding Fano compactifications of the spectra of mutation algebras.
In Section~\ref{sec:MA-vs-CR}, we prove the results regarding mutation algebra structures on the Cox rings of algebraic varieties. 

\section{Preliminaries}\label{sec:prel}

We work over an algebraically closed field $\kk$ of characteristic zero. 
In this section, we introduce some preliminary definitions regarding mutation algebras, singularities, and Fano varieties.

\subsection{Mutation semigroup algebras}

In this subsection, we introduce the definition of mutation semigroup algebras. To do so, we first recall the concepts of cluster algebras and Laurent phenomenon algebras.

\subsubsection{Cluster algebras}

We briefly recall the definition and some key properties of cluster and upper cluster algebras of geometric type, following \cite{FZ02} and \cite{FZ05}.
Let $I= I_{uf} \sqcup I_{f}$  be a finite set with a fixed partition and $\mathbb{F}$ be a field extension of $\kk$.
Let $B \in \zz^{I \times I_{uf}}$ be an integer matrix such that its \textit{principal part} $B^\circ:= B_{| I_{uf} \times I_{uf}}$ is skew-symmetrizable.
Moreover, let $x:=(x_i)_{i \in I}$ be a collection of elements of $\mathbb{F}^\times$ that freely generate the field $\mathbb{F}$ over $\kk.$
The pair $t:=(B,x)$ is a \textit{seed} of the field $\mathbb{F}.$
We refer to the set $I$ (resp. $I_{uf}$, resp. $I_{f}$) as the set of \textit{vertices} (resp.  \textit{mutable vertices}, resp.  \textit{frozen vertices}) of the seed $t$.
The matrix $B$ is the \textit{exchange matrix} of $t$ and $x$ is its \textit{cluster}. 
An element $x_i$ for some $i \in I$ is a mutable or frozen \textit{cluster variable} of the seed $t$ according to the type of its vertex.
We denote by 
\begin{equation}
\label{eq:def laurent pol}
\Li(t):= \kk[x_i^{\pm 1} \mid i \in I]
\end{equation}
the ring of Laurent polynomials in the cluster variables of $t$.

We say that two seeds $t=(B,x)$ and $t'=(B',x')$ of the field $\mathbb{F}$ are \textit{mutation equivalent}, written $t' \sim t$, if there exists a finite sequence $k_1, \dots , k_n$ of mutable vertices of $t$ such that 
\[
t'= \mu_{k_n} \circ \cdots \circ \mu_{k_1} (t).
\]
Here, for a mutable vertex $k$, we denote by $\mu_k$ the \textit{mutation in direction $k$} as defined in \cite{FZ02}.
We recall that if $t'= \mu_k(t) \, \, (k \in I_{uf})$, the cluster $x'=(x'_i)_{i \in I}= \mu_k(x)$ of $t'$ is related to the cluster $x$ of $t$ by the fact that $x'_i=x_i$ for $i \neq k$ and that 
\begin{equation}
\label{eq:exchange rel}
x'_k x_k:= \prod_{j \in I} x_j^{\max\{0, b_{jk}\}} +\prod_{j \in I} x_j^{\max\{0, -b_{jk}\}}.
\end{equation}
Formula \eqref{eq:exchange rel} is called  the \textit{exchange relation} of the seed $t$ in direction $k$.
\begin{definition}\label{def:CA}
{\em 
The intersection 
\[
\uclu(t):= \bigcap_{t' \sim t} \Li(t')
\]
of the Laurent polynomial rings in the cluster variables of the seeds mutation equivalent to $t$ is the
{\em upper cluster algebra} of the seed $t.$ 
}
\end{definition} 
We denote by $\clu(t)$ the \textit{cluster algebra} of $t$, which is by definition the subalgebra of $\ff$ generated (over $\kk$) by the cluster variables of the seeds $t' \sim t$ and by the inverse of the frozen variables.
By the Laurent phenomenon \cite{FZ02}, we have an inclusion
\[
\clu(t) \subseteq \uclu(t).
\]
We recall that the seed $t$ is said to be of \textit{maximal rank} (resp. \textit{primitive}) if its exchange matrix $B$ has such a feature.~\footnote{The matrix $B$ is primitive if any of its columns is a primitive vector of $\zz^I$.} 
It is well known that the property of being of maximal rank (resp. primitive) is invariant under seed mutation. 
We recall the following result \cite[Corollary 1.9]{FZ05}.

\begin{theorem}
\label{thm:upper finite}
If the seed $t$ is of maximal rank, then the following equality holds:
\[
\uclu(t)= \Li(t) \cap \bigcap_{ k \in I_{uf}} \Li(\mu_k(t)).
\] 
\end{theorem}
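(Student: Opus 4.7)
The plan is to prove the reverse inclusion, since $\uclu(t)\subseteq \Li(t) \cap \bigcap_{k\in I_{uf}} \Li(\mu_k(t))$ is immediate from the Definition of $\uclu(t)$ (the intersection on the right is over a subfamily of seeds that includes $t$ and all of its one-step mutations). Write $\mathcal{U}(t):= \Li(t)\cap \bigcap_{k\in I_{uf}}\Li(\mu_k(t))$ for the ``upper bound'' attached to $t$. Then the theorem is equivalent to the assertion that $\mathcal{U}(t)=\mathcal{U}(t')$ whenever $t'\sim t$, because once mutation-invariance of $\mathcal{U}$ is known, induction on the length of a mutation sequence yields $\mathcal{U}(t)\subseteq \Li(t'')$ for every $t''\sim t$, and hence $\mathcal{U}(t)\subseteq \uclu(t)$.

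The first reduction is that it suffices to treat a single mutation $t'=\mu_j(t)$ with $j\in I_{uf}$, and then show $\mathcal{U}(t)\subseteq \mathcal{U}(t')$ (the reverse inclusion then follows by symmetry since $\mu_j$ is an involution on seeds). By the definition of $\mathcal{U}(t')$, I need to check two things for a given $f\in \mathcal{U}(t)$: first, that $f\in \Li(t')$, and second, that $f\in \Li(\mu_k(t'))$ for every $k\in I_{uf}$. The first is automatic because $\Li(t')=\Li(\mu_j(t))$ and $\mathcal{U}(t)\subseteq \Li(\mu_j(t))$. For the second, when $k=j$ we have $\mu_j(t')=t$, and again $f\in \Li(t)$ by hypothesis, so the only nontrivial case is $k\neq j$.

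The main step, and the one that uses the maximal rank hypothesis, is showing that for $k\in I_{uf}\setminus\{j\}$ every element $f$ that is simultaneously Laurent in the clusters of $t$, $\mu_j(t)$, and $\mu_k(t)$ is automatically Laurent in the cluster of $\mu_k(\mu_j(t))$. I would carry this out by working in the common field $\ff$ and analyzing the relation between the four clusters involved. The cluster of $\mu_k(t')$ differs from that of $t'$ only in the $k$-th variable, which is replaced by $(N^+ + N^-)/x_k'$, where $N^{\pm}$ are the exchange monomials written in the cluster of $t'$, and the variable $x_k'$ coincides with $x_k$ since $k\neq j$. Using unique factorization in $\Li(t')$, the problem reduces to showing that any irreducible factor of $N^+ + N^-$ that divides the denominator of $f$ (expressed in the cluster of $t'$) must be cancelled, and this is forced by the Laurent property at $t$ and $\mu_k(t)$ together with the coprimality of the exchange polynomials with each other and with the cluster variables $x_i$ ($i\neq k$). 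The maximal rank hypothesis enters precisely here: it guarantees that certain rows of $B$ are nonzero, which in turn ensures that the relevant exchange polynomials do not reduce to a unit or to pure monomials, so that the coprimality statement is nontrivial and applicable.

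The main obstacle I expect is the bookkeeping in Step 3: keeping track of how the cluster variables transform between the four seeds $t$, $\mu_j(t)$, $\mu_k(t)$, and $\mu_k(\mu_j(t))$, and identifying the precise coprimality statement whose proof uses maximal rank. Once the coprimality of the exchange polynomials is isolated as a clean lemma, the rest of the argument is a formal Laurent-ring manipulation. In particular, I would expect the proof to mirror the strategy in \cite{FZ05}, where the result appears as Corollary~1.9, and to use the irreducibility analysis of the exchange binomials $M^+ + M^-$ as the main technical input.
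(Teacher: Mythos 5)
The paper itself gives no proof of this statement: it is quoted directly from \cite[Corollary 1.9]{FZ05}, so the relevant benchmark is the argument of that paper, and your outline reproduces its architecture faithfully. The trivial inclusion, the reformulation as mutation-invariance of the upper bound $\Li(t)\cap\bigcap_{k\in I_{uf}}\Li(\mu_k(t))$, the reduction to a single adjacent seed $t'=\mu_j(t)$ and then to directions $k\neq j$, and the final reduction to the claim that $f\in\Li(t)\cap\Li(\mu_j(t))\cap\Li(\mu_k(t))$ forces $f\in\Li(\mu_k(\mu_j(t)))$ are all correct reductions (for the induction along longer mutation sequences you also need that maximal rank is preserved under mutation, which the paper records just before the theorem statement).

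The gap is that this last claim, which is the entire content of the theorem, is asserted rather than proved, and the one concrete thing you say about it misidentifies what the maximal-rank hypothesis does. Its role is to force \emph{pairwise} coprimality of the exchange polynomials: full column rank of $B$ means no two columns are proportional, hence the binomials $1+x^{v_j}$ and $1+x^{v_k}$ have no common non-unit factor, and this persists at every seed because full rank is mutation-invariant. Knowing only that ``certain rows of $B$ are nonzero,'' i.e.\ that each exchange polynomial is not a unit or a pure monomial, does not rule out a common factor of $P_j$ and $P_k$, and it is exactly such a shared factor that would break the cancellation argument you sketch. Moreover the ``bookkeeping'' you defer is where the work lives: the exchange polynomial in direction $k$ computed in the seed $t'=\mu_j(t)$ is not the one computed in $t$ (mutation at $j$ changes the entries $b_{ik}$), so translating ``$f$ is Laurent at $\mu_k(t)$'' into the statement that the \emph{new} exchange polynomial cancels from the denominator of $f$ in $\Li(t')$ requires the explicit analysis of \cite{FZ05}, namely the structural description of $\Li(t)\cap\Li(\mu_k(t))$ over the ring of Laurent polynomials in the $x_i$, $i\neq k$, and the comparison lemma for the triple intersections $\Li(t)\cap\Li(\mu_j(t))\cap\Li(\mu_k(t))$ under mutation at $j$. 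As written, your proposal is a correct roadmap to the cited proof, but the step carrying the mathematical content is not established, and the stated mechanism by which maximal rank enters would not suffice to run it.
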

Most of the cluster structures encountered in nature are associated to maximal rank seeds. 
Hence, even though the set of seeds mutation equivalent to a given seed is typically infinite \cite{FZ03}, the intersection defining  upper cluster algebras  can often be assumed to be finite. 

\medskip

Assume from now on that the seed $t$ is fixed, and simply denote by $\uclu$ the upper cluster algebra $\mathcal{U}(t).$
Recall that algebra $\uclu$ can be interpreted as the ring of regular functions on the associated $\clu$-type cluster variety $U$ \cite{Fock2009cluster, GHK15birational}. 
Denote by $U^{\rm{trop}}(\zz)$ the set of tropical points of $U$, as defined in \cite[Definition 1.7]{GHK15birational}. 
We recall that a point $v$ of $U^{\rm{trop}}(\zz)$ is a divisorial discrete valuation $v : \kk(U)= {\rm{Frac}}(\uclu) \longrightarrow \zz \cup \{\infty\}.$
Given a collection ${\bf{v}}= (v_j)_{1 \leq j \leq n} \, \, (n \in \nn)$ of tropical points of $U$, one can consider, as it is done in \cite{GHKK18}, the algebra 
\[
\uclu^{\bf{v}}:= \{ u \in \mathcal{U} \mid v_j(u) \geq 0 \quad (1 \leq j \leq n) \},
\]
which should be interpreted as the algebra of regular functions on a partial compactification of $U$ depending on ${\bf{v}}.$
Notice that for any point $v \in U^{\rm{trop}}(\zz)$ and for any seed $t' \sim t$, there exists an element $v_{t'} \in \zz^I$ such that the discrete valuation $v$, in the coordinates given by the cluster $x'$ of $t'$, reads as
\begin{equation}
\label{eq:tropical vals}
v\bigl( \sum_{m \in \zz^I} a_m (x')^m \bigr)= \min_{m \, : \, a_m \neq 0}  \langle v_{t'}, m \rangle \qquad (a_m \in \kk),
\end{equation}
where the notation $\langle \cdot, \cdot \rangle$ stands for the standard scalar product of $\zz^I.$
It follows that the algebra $\uclu^{\bf v}$ can be expressed as an intersection
\[
\uclu^{\bf v} = \bigcap_{t' \sim t} \Li^{\bf{v}}(t'),
\]
in which each $\Li^{\bf{v}}(t')$ is a semigroup algebra. 
This discussion partially motivates our Definition \ref{def:MSA} of mutation semigroup algebras.
We end this section by emphasizing a relevant and ubiquitous case of this construction, which has found various representation theoretic and geometric applications.
Namely, the algebra 
\begin{equation}
\label{eq:upper clust non-inv}
\ouclu:= \bigcap_{t' \sim t}   \kk[x'_j \mid j \in I_{f}][(x_k')^{\pm 1} \mid k \in I_{uf}]
\end{equation}
is the \textit{upper cluster algebra with non-invertible frozen variables} of the seed $t.$ 
Moreover, we denote by $\overline{\clu}$ the corresponding \textit{cluster algebra with non-invertible frozen variables}: the subalgebra of $\ouclu$ generated by the cluster variables of the seeds mutation equivalent to $t$.
Notice that $\ouclu$ can be interpreted as the algebra $\uclu^{\bf{v}_f}$ for the collection of tropical points ${\bf{v}_f}=(v_i)_{i \in I_f}$ corresponding to the frozen vertices of $\uclu.$
Namely, these are the valuations defined by the fact that (in the notation of Equation \eqref{eq:tropical vals}) $v_{i,t'} \, \, (i \in I_{f}, \, t' \sim t)$ is the $i$-th basis element of $\zz^I$.

\subsubsection{Laurent phenomenon algebras}
Let us briefly recall that \textit{Laurent phenomenon algebras} (LPAs), defined by Lam and Pylyavskyy \cite{LP16}, provide a partial generalization of cluster algebras of geometric type (see \cite[Proposition 4.2]{LP16}). 
As for cluster algebras, LPAs are defined in terms of algebraic objects called \textit{LPA-seeds}, which are all related to each other by certain operations called mutations.
The cluster variables of two adjacent LPA-seeds of a given LPA are related to each other by a generalization of the exchange relation \eqref{eq:exchange rel}, in which the right-hand side of \eqref{eq:exchange rel} is allowed to be an irreducible Laurent polynomial satisfying some mild conditions. 
We refer to \cite{LP16} for more details.

\medskip

Given an LPA $\mathcal{A}$, we refer to the intersection $\mathcal{U}$ of the Laurent polynomial rings in the cluster variables of any LPA-seed of $\mathcal{A}$ as the \textit{upper} LPA of $\mathcal{A}$. 
By the Laurent phenomenon \cite{LP16}, we have an inclusion $\mathcal{A} \subseteq \mathcal{U}$. 
Although the notion of upper LAPs is a natural generalization of that of upper cluster algebras, it has not yet been extensively studied in the literature.
We refer to \cite{Du22Laurent} for some generalizations of the results of \cite{FZ05} in the context of upper LPAs.

\subsubsection{Mutation semigroup algebras}
In order to state the main definitions of the paper, we fix a finite rank lattice $N$ with dual lattice $M.$

\begin{definition}\label{def:mut-data}
{\em
An {\em (algebraic) mutation datum} is a pair $(u,h=g^k),$ where $u\in N$ is a primitive vector, $g\in \kk[u^\perp \cap M]$ is an irreducible Laurent polynomial, and $k$ is a positive integer.
}
\end{definition}

\begin{definition}\label{def:admiss-mut-data}
{\em
Let $\sigma \subset N_\rr$ be a strongly convex rational polyhedral cone. We say that a mutation datum $(u,h)$ is $\sigma$-{\em admissible} if $u\notin \sigma.$
}
\end{definition}

\begin{definition}\label{def:emb-semigrp}
{\em
Let $\sigma \subset N_\rr$ be a strongly convex rational polyhedral cone. Given a field extension $F$ of $\kk,$ we will refer to a $\kk$-algebra monomorphism $\iota\colon \kk[\sigma^\vee\cap M]\hookrightarrow F$ inducing an isomorphism $\kk(M)\cong F$ as an \textit{embedded semigroup algebra}.
}
\end{definition}

\begin{definition}\label{def:mutation}
{\em
Let $\sigma_1,\sigma_2\subset N_\rr$ be strongly convex rational polyhedral cones, and denote by $U_{\sigma_i}=\Spec \kk[\sigma_i^\vee\cap M]$ the corresponding affine toric varieties. We will call a birational map $\mu\colon U_{\sigma_1}\dashrightarrow U_{\sigma_2}$ a \textit{mutation} if the following conditions are satisfied:
\begin{enumerate}
    \item the induced isomorphism $\mu^*\colon \kk(M)\xrightarrow{\cong}\kk(M)$ of function fields is given on monomials by $\mu^*(x^m)=x^mh^{-\langle u,m\rangle}$ for some $\sigma_2$-admissible mutation datum $(u,h),$ and
    \item strict transform along $\mu$ induces a bijection between the invariant divisors of $U_{\sigma_1}$ and $U_{\sigma_2}$.
\end{enumerate}
If, moreover, we are given a field extension $F$ of $\kk$ and embedded semigroup algebras $\iota_i\colon \kk[\sigma_i^\vee\cap M]\hookrightarrow F,$ we will say that these embedded semigroup algebras \textit{differ by a mutation} if $\iota_1^{-1}\circ \iota_2=\mu^*$ for some mutation $\mu\colon U_{\sigma_1}\dashrightarrow U_{\sigma_2}.$
}
\end{definition}

\begin{definition}\label{def:MSA}
{\em 
A {\em mutation semigroup algebra} is a finitely generated commutative ring $R$ over a field $\kk$ which can be expressed as
\[
R=R_0\cap\hdots\cap R_n,
\]
where the following conditions hold for $i \in \{0,\dots,n\}$:
\begin{enumerate}
\item there exists an embedded semigroup algebra $\iota_i\colon \kk[\sigma_i^\vee\cap M]\hookrightarrow{\rm Frac}(R)$ with $R_i={\rm Im}(\iota_i),$
\item the embedded semigroup algebras $\iota_0$ and $\iota_i$ differ by a mutation, and
\item for each prime ideal $\mathfrak{p}\subset R_i$ of height one, the prime ideal $\mathfrak{p}\cap R$ is of height one in $R$.
\end{enumerate}
In what follows, we abbreviate mutation semigroup algebra as MSA.
We say that a mutation semigroup algebra is simply a {\em mutation algebra} if each $\sigma_i$ is $\{0\}$, i.e., there is no semigroup data on the Laurent polynomial rings.
}
\end{definition} 

\begin{proposition}
\label{prop:cluster are MSA}
Let $t$ be a maximal rank and primitive seed. If the upper cluster algebra $\uclu(t)$ is finitely generated, then it is a mutation algebra. Moreover, if the upper cluster algebra with non-invertible frozen variables $\ouclu(t)$ is finitely generated~\footnote{Finite generation of $\ouclu(t)$ implies finite generation of $\uclu(t)$, as $\uclu(t)$ is the localization of $\ouclu(t)$ at the product of the frozen cluster variables.}, then it is a mutation semigroup algebra.
\end{proposition}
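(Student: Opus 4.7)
The plan is to exhibit $\uclu(t)$ and $\ouclu(t)$ as finite intersections of embedded semigroup algebras in which consecutive pairs differ by a Definition~\ref{def:mutation} mutation with datum $(u,h)=(e_k,1+x^{b_k})$, one for each $k\in I_{uf}$.

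Under the maximal-rank hypothesis, Theorem~\ref{thm:upper finite} gives the finite description $\uclu(t)=\Li(t)\cap\bigcap_{k\in I_{uf}}\Li(\mu_k(t))$. Since mutations fix the frozen cluster variables, intersecting with $\overline{\Li}(t):=\kk[x_j:j\in I_f][x_k^{\pm}:k\in I_{uf}]$ yields the analogous finite description
\[
\ouclu(t)=\overline{\Li}(t)\cap\bigcap_{k\in I_{uf}}\overline{\Li}(\mu_k(t)),
\]
where $\overline{\Li}(\mu_k(t))=\Li(\mu_k(t))\cap\overline{\Li}(t)$ because the exponents of the frozen variables are identical in every seed. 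Each $\Li(t')$ and $\overline{\Li}(t')$ is naturally a semigroup algebra of the lattice $M$ dual to $N:=\zz^I$, to be equipped below with a suitable embedding into $\mathrm{Frac}(\ouclu(t))$.

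Now fix $k\in I_{uf}$. Skew-symmetrizability of the principal part of $B$ forces $b_{kk}=0$, so the column $b_k\in M$ lies in $e_k^\perp\cap M$; the primitivity of the seed makes $b_k$ primitive there, so an $SL$-change of coordinates carries $1+x^{b_k}$ to $1+x_1$, which is irreducible. Hence $(u,h):=(e_k,\,1+x^{b_k})$ is a valid mutation datum in the sense of Definition~\ref{def:mut-data}. Define $\iota_0(x^m):=x^m$ and $\iota_k(x^m):=x^m h^{-\langle e_k,m\rangle}$, so that $\iota_0^{-1}\circ \iota_k=\mu^{*}$ holds by construction. For $\uclu(t)$, set $\sigma_0=\sigma_k=\{0\}$; for $\ouclu(t)$, set $\sigma_0=\cone(e_j:j\in I_f)$ and
\[
\sigma_k:=\cone\bigl(e_j+[-b_{jk}]_{+}\,e_k:j\in I_f\bigr).
\]
Using the identity $x'_k=x_k^{-1}x^{[-b_k]_{+}}(1+x^{b_k})$, a direct computation on monoid generators shows that $\iota_k$ is a $\kk$-algebra isomorphism onto $\Li(\mu_k(t))$ (resp.\ $\overline{\Li}(\mu_k(t))$). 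The admissibility condition $e_k\notin\sigma_k$ is immediate from the coordinate description of $\sigma_k$, and the strict-transform bijection on invariant divisors required by Definition~\ref{def:mutation}(2) is the identity on $I_f$ since $\mu^{*}(x_j)=x_j$ for every $j\in I_f$.

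Condition~(3) of Definition~\ref{def:MSA} holds because each inclusion $\uclu(t)\hookrightarrow\Li(\mu_k(t))$ (resp.\ $\ouclu(t)\hookrightarrow\overline{\Li}(\mu_k(t))$) is the localization obtained by inverting the mutable cluster variables of $\mu_k(t)$, and localizations contract height-one primes to height-one primes. The most delicate step, and the main obstacle, is the choice of $\sigma_k$ in the $\ouclu(t)$ case: the standard cluster mutation and the Fock--Goncharov-style mutation of Definition~\ref{def:mutation} differ by a monomial transformation that fails to preserve $\sigma_0$ precisely when some $b_{jk}$ with $j\in I_f$ is negative. The cone $\sigma_k$ above is exactly the dual of this monomial shift, and this choice is what aligns the image of $\iota_k$ with $\overline{\Li}(\mu_k(t))$.
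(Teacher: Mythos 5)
Your construction of the mutation data and the embedded semigroup algebras matches the paper's: the datum $(e_k,\,1+x^{b_k})$, irreducibility from primitivity of the column $b_k$ (which lies in $e_k^\perp\cap M$ since $b_{kk}=0$), the map $\iota_k(x^m)=x^mh^{-\langle e_k,m\rangle}$ (which agrees with the paper's $\iota_k$ defined on the mutated basis), and condition (3) of Definition~\ref{def:MSA} via the observation that $\Li(\mu_k(t))$, resp. $\overline{\Li}(\mu_k(t))$, is a localization of $\uclu(t)$, resp. $\ouclu(t)$ — this last point is phrased differently from the paper (which invokes the Laurent phenomenon to get an open embedding of spectra) but is the same content. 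Your choice of cones $\sigma_k=\cone\bigl(e_j+\max\{0,-b_{jk}\}e_k : j\in I_f\bigr)$ is in fact the dual-basis cone for the mutated seed, and it is exactly the choice that makes $\iota_k\bigl(\kk[\sigma_k^\vee\cap M]\bigr)=\overline{\Li}(\mu_k(t))$; the paper instead takes $\sigma_k=\cone(e_j:j\in I_f)$ for every $k$, so on this point your bookkeeping is more careful, and admissibility $e_k\notin\sigma_k$ is indeed immediate.

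The genuine problem is the identity $\overline{\Li}(\mu_k(t))=\Li(\mu_k(t))\cap\overline{\Li}(t)$ that you use to pass from Theorem~\ref{thm:upper finite} to the finite description of $\ouclu(t)$. It is false: $(x'_k)^{-1}\in\overline{\Li}(\mu_k(t))$ since $k$ is mutable, but $(x'_k)^{-1}=x_k\bigl(x^{\max\{0,b_k\}}+x^{\max\{0,-b_k\}}\bigr)^{-1}$ is not even a Laurent polynomial in the initial cluster (primitivity forces $b_k\neq 0$, so $1+x^{b_k}$ is a non-unit in $\Li(t)$), hence $(x'_k)^{-1}\notin\overline{\Li}(t)$. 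What your argument actually needs is only the one-sided inclusion $\Li(\mu_k(t))\cap\overline{\Li}(t)\subseteq\overline{\Li}(\mu_k(t))$ together with $\ouclu(t)=\uclu(t)\cap\overline{\Li}(t)$; both are true but neither follows from the slogan ``the exponents of the frozen variables are identical in every seed.'' They amount to the statement that non-negativity of frozen exponents is independent of the chart, which requires an argument — for instance, that the divisorial valuations along the frozen divisors $\{x_j=0\}$, $j\in I_f$, agree in adjacent charts because the mutation is an isomorphism near the generic points of those divisors (or a lowest-order-term computation). Alternatively, you could simply quote the full-rank analogue of Theorem~\ref{thm:upper finite} for non-invertible frozen variables, which is in effect what the paper does when it asserts $\overline{R}=\overline{R}_0\cap\bigcap_{k\in I_{uf}}\overline{R}_k$. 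As written, the $\ouclu(t)$ half of your proof rests on a false equality, so this step must be repaired, although the rest of the argument goes through.
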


\begin{proof}
Let us first consider the statement for $\uclu(t)$. We set 
\[
R:= \uclu(t), \quad R_0:= \Li(t), \quad R_k:= \Li(\mu_k(t)) \, \quad (k \in I_{uf}).
\]
As the seed $t$ is of maximal rank, Theorem \ref{thm:upper finite} implies that 
\[
R= R_0 \cap \bigcap_{k \in I_{uf}} R_k.
\]
By the Laurent phenomenon, we have that the morphism $\Spec(R_k) \longrightarrow \Spec(R) \, \, (k \in I_{uf} \cup \{ 0 \})$ induced by the natural inclusion $R_k \subseteq R$ is an open embedding. Its image is the locus where the cluster variables of the corresponding seed do not vanish. 
Therefore, condition (3) of Definition \ref{def:MSA} is satisfied.
Let us set $N:= \zz^I$ with canonical basis $(e_i)_{i \in I}$, and denote by $(f_i)_{i \in I}$ its dual basis in the dual lattice $M$ of $N$.
For $k \in I_{uf}$ and $i \in I$, let us set 
\[
\mu_k(f)_i:= \begin{cases} f_i & \mbox{if} \quad i \neq k \\
- f_k + \sum_{j \in I} \max \{ 0, -b_{jk} \} f_j & \mbox{if} \quad i = k.
\end{cases}
\]
One can check that the collection $(\mu_k(f))_{i \in I} \, \, (k \in I_{uf})$  is a basis of the lattice $M.$
Denote by $\iota_k : \kk[M] \longrightarrow {\rm Franc}(R) \, \, (k \in I_{uf} \cup \{0\})$ the homomorphism defined by $x^{\mu_k(f)_i} \longmapsto \mu_k(x)_i \, \, (i \in I)$. 
In the previous definition, we use the convention that $\mu_0(f)_i= f_i$ and $\mu_0(x)_i = x_i \,\,(i \in I).$ 
Notice that, by the Laurent phenomenon, $\iota_k$ is an embedded semigroup algebra. Moreover, we clearly have that ${\rm Im}(\iota_k)= R_k$.
Thus, condition (1) of Definition \ref{def:MSA} is fulfilled.
Finally, we verify that the homomorphisms $\iota_0$ and $\iota_k \, \, (k \in I_{uf})$ differ by a mutation.
This essentially follows from \cite[Section 2]{GHK15birational}, but we include a proof for completeness.

Let us set
\begin{equation}
\label{eq:mut datum cluster}
 u_k:=e_k, \quad v_k:=  \sum_{j \in I}b_{jk}f_j, \quad  g_k=h_k:= 1 + x^{v_k}.
\end{equation}
Then, the mutation formulae for cluster variables \eqref{eq:exchange rel} imply that 
\[
(\iota_0^{-1} \circ \iota_k) (x^m) = x^m h_k ^{-\langle u_k, m \rangle} \qquad (m \in M).
\]
As the seed $t$ is primitive, the proof of \cite[Lemma 4.1]{LP16} implies that $h_k=g_k$ is an irreducible element of $\kk[M]$. 
It moreover belongs to $\kk[u_k^\perp \cap M]$ as $b_{kk}=0.$
Thus, condition (1) of Definition \ref{def:mutation} is satisfied for $\iota_0^{-1} \circ \iota_k.$ 
As there is no torus-invariant divisor in $\Spec(\kk[M])$, condition (2) is also satisfied. Thus, the proof is complete.

We now sketch the proof in the case of $\ouclu(t)$, which is completely analogous to the one of $\uclu(t)$. 
In this case, we set 
\[
\overline{R}:= \ouclu(t), \quad \overline{R}_0:= \overline{\Li}(t), \quad \overline{R}_k:= \overline{\Li}(\mu_k(t)) \, \quad (k \in I_{uf}),
\]
where for a seed $t' \sim t$ with cluster $x'$ we use the notation
\[
\overline{\Li}(t'):= \kk[x'_i \mid  \in I_f][(x'_j)^{\pm 1} \mid j \in I_{uf}].
\]
Moreover, we denote by $\sigma_k \subset N_\rr \, \, (k \in I_{uf} \cup \{0\})$ the cone generated by the elements $e_j$ for $j \in I_f.$
Then, we have that 
\[
\overline{R}= \overline{R}_0 \cap \bigcap_{k \in I_{uf}} \overline{R}_k
\]
and that the restriction $\overline{\iota}_k$ of $\iota_k \, \, (k \in I_{uf} \cup \{0\})$ to $\kk[M \cap \sigma_k^\vee]$ is an isomorphism with image $\overline{R}_k$.
Moreover, the natural morphism $\Spec(\overline{R}_k) \longrightarrow \Spec(\overline{R}) \, \, (k \in I_{uf} \cap \{0 \}$) is an open embedding by the strong version of the Laurent phenomenon \cite[Proposition 11.2]{FZ03}.
Then, notice that the mutation datum of Equation \eqref{eq:mut datum cluster} is clearly $\sigma_k$-admissible, and moreover Condition (2) of Definition \ref{def:mutation} is also obviously satisfied for the rational map between $U_{\sigma_0}$ and $U_{\sigma_k}$ induced by $\overline{\iota}_0^{-1} \circ \overline{\iota}_k$.
Therefore, the discussion of the $\uclu(t)$-case implies that the homomorphisms $\overline{\iota}_0$ and $\overline{\iota}_k \, \, (k \in I_{uf})$ differ by a mutation. 
\end{proof}

\begin{corollary}
Let $\mathcal{A}$ be a locally acyclic cluster algebra admitting a maximal rank and primitive seed~\footnote{This condition is equivalent to requiring that every seed of $\mathcal{A}$ is of maximal rank and primitive.}. Then $\mathcal{A}$ is a mutation algebra.
\end{corollary}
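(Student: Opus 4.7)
The plan is to deduce this corollary directly from Proposition \ref{prop:cluster are MSA} by invoking two well-known properties of locally acyclic cluster algebras established by Muller. First, I would recall from \cite{Muller13acyclic} that any locally acyclic cluster algebra is finitely generated as a $\kk$-algebra. Second, I would invoke the fact (also due to Muller) that locally acyclic cluster algebras coincide with their upper cluster algebras, so that $\mathcal{A}=\uclu(t)$ for any seed $t$ of $\mathcal{A}$.

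Given a seed $t$ of $\mathcal{A}$ that is of maximal rank and primitive, these two facts combined put us exactly in the hypotheses of the first assertion of Proposition \ref{prop:cluster are MSA}: the upper cluster algebra $\uclu(t)$ is finitely generated, and the seed $t$ is of maximal rank and primitive. That proposition then produces a mutation algebra presentation of $\uclu(t)$, and since $\mathcal{A}=\uclu(t)$, the cluster algebra $\mathcal{A}$ inherits this mutation algebra structure. The small footnote in the corollary, stating that the existence of one maximal rank and primitive seed implies every seed is such, is a standard consequence of the mutation rules for exchange matrices, so we may assume without loss of generality that any chosen seed satisfies these conditions.

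I do not expect a substantive obstacle here, since essentially all the work is already encoded in Proposition \ref{prop:cluster are MSA}. The only delicate point is the appeal to Muller's equality $\mathcal{A}=\uclu(t)$ in the locally acyclic setting, which must be cited carefully, together with the invariance of the maximal-rank and primitive properties under mutation (so that the hypothesis on \emph{some} seed suffices). Once these are in place, the proof reduces to a one-line invocation of Proposition \ref{prop:cluster are MSA}.
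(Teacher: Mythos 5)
Your proposal is correct and matches the paper's proof: the paper likewise cites Muller's results that a locally acyclic cluster algebra equals its upper cluster algebra and is finitely generated, and then applies Proposition~\ref{prop:cluster are MSA} to the given maximal rank, primitive seed. No gaps.
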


\begin{proof}
As $\mathcal{A}$ is locally acyclic, \cite{Muller13acyclic} implies that $\mathcal{A}$ is equal to its upper cluster algebra $\mathcal{U}$, and that it is finitely generated as a $\kk$-algebra. Then, the statement follows from Proposition \ref{prop:cluster are MSA}.
\end{proof}

\begin{remark}
{\em 
We expect that Proposition \ref{prop:cluster are MSA} can be generalized in various directions for appropriate classes of upper Laurent phenomenon algebras and partial compactifications $\uclu^{\bf v}$ of upper cluster algebras.
}
\end{remark} 

We conclude this subsection by giving an example of a mutation semigroup algebra that is not a 
Laurent phenomenon algebra.

\begin{example}\label{ex:msa-not-lpa}
{\em
Consider the log Calabi--Yau pair $(X,B)$ consisting of the Hirzebruch surface $X=\mathbb{F}_1$ and its toric boundary $B.$ Denote by $N$ (respectively, $M$) the cocharacter (respectively, character) lattice of the acting torus and by $u\in N$ the primitive generator of the ray corresponding to the positive section in $B.$ Consider coordinates $\kk[x^{\pm 1},y^{\pm 1}]$ on $X\setminus B,$ where $x^{\pm 1}\in \kk[u^\perp\cap M]$ are primitive. Let $\beta\colon Y\rightarrow X$ be the blow-up of two distinct non-invariant points on the positive section in $B,$ and denote by $(Y,B_Y)$ the log pullback of $(X,B)$ via $\beta.$ Then $Y$ is a del Pezzo surface and $Y\setminus B_Y\cong\Spec \kk[x^{\pm 1}, y, z]/(yz-(x+\lambda_1)(x+\lambda_2))$ for some $\lambda_1,\lambda_2\in \kk^*.$ In particular, it follows from Theorem ~\ref{introthm:msa-defs-agree} that $R=\kk[x^{\pm 1}, y, z]/(yz-(x+\lambda_1)(x+\lambda_2))$ is an MSA. We claim, however, that algebra is not an LPA.\par
We begin by observing that there are precisely four toric charts $U_1,\hdots, U_4\hookrightarrow Y\setminus B_Y,$ up to isomorphism. Denote by $\pi\colon X\rightarrow \pp^1$ the fibration. Each toric chart $U_i$ can be realized by the blow-downs $\beta_i\colon Y\rightarrow X_i$ of one component of each of the degenerate fibers of $\phi=\pi\circ \beta.$  We may assume that $\beta=\beta_1$ and that $\beta_2\colon Y\rightarrow X_2\cong\mathbb{F}_1$ is the blow-down of the two non-$\beta_1$-exceptional components of the $\phi$-degenerate fibers. For $i=3,4$ we have $X_i\cong \pp^1\times \pp^1.$ The morphism $\phi$ factors through each $\beta_i,$ giving us contractions $\pi_i\colon X_i\rightarrow \pp^1.$ Denoting by $B_i=\beta_{i,*}B_Y$ the toric boundary of $X_i,$ we can choose coordinates $\kk[x^{\pm 1}, y_i^{\pm 1}]$ on $X_i\setminus B_i$ satisfying $y_1=y,$ $y_2=z,$ $y_3=y_1^{-1}(x+\lambda_1)=y_2(x+\lambda_2)^{-1}$ and $y_4=y_1^{-1}(x+\lambda_2)=y_2(x+\lambda_1)^{-1}.$\par
We note that $x^my_i^n\notin R$ for $i=3,4$ and $n\neq 0$. Since $x$ is a unit in $R$ it suffices to show that $y_i^n\notin R$ for $i=3,4$ and $n\neq 0$. We provide an argument in the case $i=3,$ the case $i=4$ being similar. The relation $y_2=y_3(x+\lambda_2)$ gives an injection $R\hookrightarrow S_1=k[x^{\pm 1},y_1,y_3]/(x+\lambda_1-y_1y_3)$, and the relation $y_1=y_3^{-1}(x+\lambda_1)$ gives an injection $R\hookrightarrow S_2=k[x^{\pm 1},y_2,y_3^{-1}]/(x+\lambda_2-y_2y_3^{-1}).$ In the ring $S_1,$ one can check that $y_1$ is prime while $x+\lambda_1$ is not prime. In light of the relation $x+\lambda_1=y_1y_3,$ it follows that $y_3$ cannot be a unit in $S_1$ and hence that $y_3^n\notin R\subset S_1$ for any $n<0.$ Similarly, one sees that $y_3^{-1}$ cannot be a unit in $S_2$ and hence that $y_3^n\notin R\subset S_1$ for any $n>0.$\par
Now, let us assume that $R$ is an LPA. Let $\{z_1,w_1\}$ and $\{z_2,w_2\}$ be the clusters in two seeds differing by a single mutation. Swapping the labeling of these seeds if necessary, we may assume that $\Spec k[z_i^{\pm 1},w_i^{\pm 1}]=U_i$ for $i=1,2.$ Indeed, $\Spec k[z_i^{\pm 1},w_i^{\pm 1}]=U_{j_i}$ for some $1\leq j_i\leq 4,$ and we must have $z_i=\mu_ix^{a_i}y_{j_i}^{b_i}$ and $w_i=\nu_ix^{c_i}y_{j_i}^{d_i}$ for some 
\[
\begin{bmatrix}a_i & b_i\\ c_i & d_i\end{bmatrix}\in {\rm GL}_2(\zz)
\]
and $\mu_i,\nu_i\in \kk^*.$ In particular, at least one of $z_i$ and $w_i$ is of the form $ry_{i_j}^n$ for $r\in R^*$ and $n\neq 0.$ But both $z_i$ and $w_i$ are elements of $R,$ so it follows from the previous paragraph that $1\leq i_j\leq 2.$\par
We may assume $z_1=z_2$ and $w_1=w_2^{-1}z_1^m(z_1+\gamma)$ for some integer $m$ and $\gamma\in \kk^*.$ In the notation of the previous paragraph, we write the equation $z_1=z_2$ as $$\mu_1x^{a_1}y_1^{b_1}=\mu_2x^{a_2}y_1^{-b_2}(x+\lambda_1)^{b_2}(x+\lambda_2)^{b_2}.$$ It follows that we must have $b_1=-b_2=0,$ $a_1=a_2=\pm 1$ and $\mu_1=\mu_2.$ Since 
\[
\begin{bmatrix}a_i & 0\\ c_i & d_i\end{bmatrix}\in {\rm GL}_2(\zz),
\]
it follows that $d_i=a_i$ for $i=1,2.$ Write $a$ for the common value of $a_1=a_2=d_1=d_2$ and $\mu$ for the common value of $\mu_1=\mu_2.$ Replacing $x$ with $\mu x,$ we may assume that $\mu=1.$ We can now write the equation $w_1=w_2^{-1}z_1^m(z_1+\mu)$ as $$\nu_1x^{c_1}y_1^{a}=\nu_2^{-1}x^{am-c_2}y_1^{a}(x+\lambda_1)^{-a}(x+\lambda_2)^{-a}(x^a-\gamma).$$
It follows that $x^a-\gamma$ and $(x+\lambda_1)^a(x+\lambda_2)^a$ differ by units in $k[x^\pm,y_1^\pm],$ which is clearly nonsense.
}
\end{example}

\subsection{Kawamata log terminal singularities}

In this subsection, we recall the definition of some of the singularities of the minimal model program.

\begin{definition}\label{def:klt}
{\em 
A {\em pair} is a couple $(X,B)$ where $X$ is a normal quasi-projective variety and $B$ is an effective divisor on $X$ for which $K_X+B$ is a $\qq$-Cartier divisor. 
Let $\pi\colon Y\rightarrow X$ be a projective birational morphism from a normal variety. 
Let $E$ be a prime divisor on $Y$.
Write 
\[
\pi^*(K_X+B)=K_Y+B_Y,
\]
where $K_Y$ is chosen such that $\pi_*K_Y=K_X$.
The {\em log discrepancy} of $(X,B)$ with respect to $E$ is defined to be 
\[
a_E(X,B):=1-{\rm coeff}_E(B_Y). 
\]
We say that the pair $(X,B)$ is {\em Kawamata log terminal} (resp. {\em log canonical}) 
if $a_E(X,B)>0$ holds for every prime $E$ on every birational moidel $Y$. 
We may write klt to abbreviate Kawamata log terminal
and lc to abbreviate log canonical.
When $B=0$ and the pair $(X,0)$ is klt (resp. lc), 
we may just say that $X$ is klt (resp. lc).
}
\end{definition} 

Roughly speaking, the points in the prime divisor $E$ on the model $Y$ of $X$ correspond to tangent directions through $\pi(E)\subset X$. The log discrepancy $a_E(X,B)$ measures how singular the pair $(X,B)$ is along these tangent directions. The larger the log discrepancies, the milder the singularities of $(X,B)$ are. 
We refer the reader to~\cite{Amb99} for further discussion of log discrepancies.

In dimension two, klt singularities are the same as quotient singularities~\cite[Proposition 9.3]{GKKP11}.
In higher dimensions, every quotient singularity is klt~\cite[Corollary 2.43]{Kol13}.
Elliptic singularities give examples of surface lc singularities which are not klt~\cite[Page 73]{Kol13}.
Two-dimensional pairs $(X,B)$ with log canonical singularities have been classified by Alexeev (see, e.g.,~\cite[Section 3.3]{Kol13}). 
In dimension three, there is no classification of klt singularities; however, when all the log discrepancies of exceptional divisors are larger than one,~\footnote{This case is known as terminal singularities.} these are toric hypersurfaces singularities and are classified by Reid~\cite{Rei87}.

\subsection{Fano varieties} In this subsection, we recall the definition of Fano varieties and some related notions for pairs.

\begin{definition}
{\em 
A {\em Fano variety} is a normal projective variety $X$ for which the anti-canonical divisor $-K_X$ is ample. 
We say that $X$ is a {\em klt Fano variety} 
if it is a Fano variety with klt singularities.
}
\end{definition}

Fano varieties are one of the three building blocks of algebraic varieties. 
Smooth Fano varieties of dimension two are known as {\em del Pezzo surfaces}. 
These were classically classified by the Italian school of algebraic geometers.
On the other hand, klt Gorenstein Fano surfaces were classified by Miyanishi and Zhang~\cite{MZ88}.
Smooth Fano varieties of dimension three were classified by Mori, Mukai, Prokhorov, and Iskovskikh~\cite{MM81,IP99}. In each dimension $n$, there are only finitely many families of smooth Fano varieties. 
This result was proved by Koll\'ar, Miyaoka, and Mori~\cite{KMM92b}.
Furthermore, given $n\in \mathbb{Z}_{>0}$ and $\epsilon>0$, in each dimension $n$ there are only finitely many families of klt Fano varieties with log discrepancies $>\epsilon$. 
This result was conjectured by Borisov, Alexeev, and Borisov, and proved by Birkar in~\cite{Bir21}.
Nowadays, this theorem is known as the boundedness of Fano varieties.

\begin{definition}\label{def:log-Fano}
{\em 
We say that a variety $X$ is {\em log Fano}
if there exists a boundary divisor $B$ on $X$ such that the pair $(X,B)$ is klt 
and the divisor $-(K_X+B)$ is ample.
}
\end{definition} 

In some sense, log Fano varieties can be thought of as Fano varieties up to a perturbation given by the boundary divisor $B$. 
Toric varieties are often not Fano varieties. 
Indeed, a projective toric variety is Fano if and only if its corresponding polytope is reflexive~\cite{CLS11}.
However, every projective toric variety is log Fano. In the case of toric varieties, the divisor $B$ giving the log Fano structure can always be chosen to be torus invariant. 

\subsection{Cluster type varieties} In this subsection, we recall the definition of cluster type pairs and cluster type varieties.
The original definition was given in~\cite{EFM24} using the language of crepant birational maps. 
The following definition is an equivalent one due to~\cite[Theorem 1.3]{EFM24}. 

\begin{definition}\label{def:ct}
{\em 
Let $X$ be a normal quasi-projective variety of dimension $n$. 
We say that $X$ is a {\em cluster type variety} 
if the following conditions are satisfied: 
\begin{enumerate}
\item there is an embedding in codimension one 
$\mathbb{G}_m^n \dashrightarrow X$,
\item the volume form of the algebraic torus
\[
\Omega_{\mathbb{G}_m^n}:=\frac{dx_1\wedge dx_2 \wedge \dots \wedge dx_n}{x_1\cdots x_n} 
\]
has no zeros on $X$. 
\end{enumerate} 
We may write $\Omega_X$ for the induced volume form on $X$. By definition, the volume form $\Omega_X$ only has poles on $X$. 
Given a cluster type variety $X$ and its volume form $\Omega_X$, we denote by $B$ the effective divisor given by the poles of the volume form $\Omega_X$.
}
\end{definition} 

Every toric variety is cluster type.
Indeed, the volume form of the algebraic torus in a toric variety has simple poles at every torus invariant prime divisor.
However, the class of cluster type varieties is much larger. 
For instance, every del Pezzo surface of degree $\geq 2$ is cluster type, and a general del Pezzo surface of degree one is cluster type (see, e.g,~\cite[Theorem 2.1 and Proposition 2.3]{ALP23} and~\cite[Proposition 1.3]{GHK15}).
The projectivization over $\pp^n$ of the tangent bundle $T_{\pp^n}$ is also a cluster type variety~\cite{ELY25}. Many hypersurfaces of low degree in toric varieties are of cluster type as well~\cite{ELY25}. 
By~\cite{MJ24}, the cluster type condition is constructible in families of smooth Fano varieties.

\subsection{Cox rings}\label{def:CR} In this subsection, we recall the concept of the Cox ring of a normal projective variety. 

\begin{definition}
{\em 
Let $X$ be a normal projective variety with finitely generated class group ${\rm Cl}(X)$. 
Fix a subgroup $K\subset {\rm WDiv}(X)$ such that the induced homomorphism $c\colon K\rightarrow {\rm Cl}(X)$ sending $D$ to its class in the Class group $[D]$ is surjective.
Let $K^0\subset K$ be the kernel of $c$.
Fix a group homomorphism $\chi\colon K^0 \rightarrow \kk(X)^*$ 
that satisfies ${\rm div}(\chi(E))=E$ for all $E\in K^0$.
Define the divisorial sheaf $\mathcal{S}$ associated to $K$ to be
\[
\mathcal{S}:=\bigoplus_{D\in K} \mathcal{O}_X(D). 
\]
Consider the ideal sheaf $\mathcal{I}$ defined by 
\[
\langle 1-\chi(E) \mid E\in K^0 \rangle. 
\]
The {\em Cox sheaf} associated to $K$ and $\chi$ is the quotient sheaf $\mathcal{R}:=\mathcal{S}/\mathcal{I}$
together with the induced ${\rm Cl}(X)$-grading.
The {\em Cox ring} of $X$, denoted by ${\rm Cox}(X)$, is the ring of global sections
of its Cox sheaf. 
The {\em Cox space} of $X$, denoted by $\bar{X}$, is the spectrum of the Cox ring of $X$. 
}
\end{definition} 

A priori, the definition of the Cox ring depends on the choice of $K$ and $\chi$. However, for normal projective varieties, all the induced Cox rings are isomorphic graded rings (see~\cite[Proposition 4.2.2]{ADHL15}).
The Cox ring of a toric variety $X$ is a ${\rm Cl}(X)$-graded
polynomial ring known as its homogeneous coordinate ring. 
This ring was studied by Cox (see~\cite{Cox95}). 
Whenever ${\rm Cox}(X)$ is finitely generated, it gives a natural homogeneous coordinates for the variety $X$.
It is often the case that the geometry of $X$ can be understood by studying the Cox ring ${\rm Cox}(X)$.
For instance, a $\qq$-factorial Mori dream space $X$ is log Fano if and only if ${\rm Cox}(X)$ has klt singularities (see~\cite[Theorem 1.1]{GOST15}).
Furthermore, if ${\rm Cox}(X)$ is a polynomial ring, then $X$ is a toric variety~\cite[Corollary 2.10]{HK00}. 

\section{Explicit examples}\label{sec:ex-ex} In this section, we give several examples of the new definitions in this article. We also prove explicit theorems that exemplify the more abstract results of this paper. 

\subsection{Fano compactifications of cluster algebras}
In this subsection, we give explicit examples of Fano and log Fano compactifications of spectra of mutation semigroup algebras.

We start by fixing some notation to give some Lie-theoretic examples.
Let $G$ be a connected reductive algebraic group over $\kk$ and let $X$ be a partial flag variety of $X$. 
In other words, $X$ is a projective variety which is a homogeneous space under a left $G$-action. 
Let $B$ and $B^-$ be opposite Borel subgroups of $G$, and $W$ be the Weyl group of $G$ with respect to the maximal torus $T:=B \cap B^-$. We denote by $\mathcal{S}$ the set of simple reflections of $W$ determined by the Borel group $B$.

\subsubsection{Open Schubert cells and flag varieties}
\label{subsub:Schubert flag}
Let $\Omega \subseteq X$ be the open $B$-orbit. 
The variety $\Omega$ is  an affine space that can be realized as the spectra of a distinguished upper cluster algebra with non-invertible frozen variables \cite{geiss2011kac, goodearl2021integral} closely related to the representation theory of $G$.
The flag variety $X$ is a smooth Fano compactification of $\Omega$.

\begin{example}
\label{ex:schubertSL3}
{
\rm{
Assume, for instance, that $G= \SL_3$ and that $B$ (resp $B^-$) is the Borel subgroup of $G$ consisting of upper (resp. lower) triangular matrices. 
Set $X:=G/B^-$. 
Let 
\[
U:= \left\{ \begin{pmatrix}
1 & a & b \\
0 & 1 & c \\
0 & 0 & 1
\end{pmatrix}  \mid a,b,c \in \kk\right\}
\]
}
be the unipotent radical of $B$.
The morphism $U \longmapsto G/B^-$ defined by $u \longmapsto uB^-/B^-$ is an open embedding whose image is $\Omega$.
Through this isomorphism, the ring $\mathcal{O}(\Omega)$ can be realized as the upper cluster algebra with non-invertible frozen variables $\ouclu(t)$ associated to the seed $t$ whose quiver is 
\[
\begin{tikzcd}
 {\blacksquare 1} & {\bigcirc 2} & {\blacksquare 3}
 \arrow[from=1-1, to=1-2]
 \arrow[from=1-2, to=1-3]
\end{tikzcd}
\] 
and whose cluster variables are
\[
x_1= ac-b, \qquad x_2= a, \qquad x_3= b.
\]

}
\end{example}

\subsubsection{Schubert cells and Schubert varieties}
\label{subsec:schubert}
Assume that $X=G/B^-$ is the \textit{complete} flag variety of $G$. 
Let $X_w:= \overline{BwB^-/B^-} \subseteq X$ be the \textit{(opposite) Schubert variety}\footnote{Throughout this section, we use a notation for Schubert and Richardson varieties that differs slightly from the one of Section~\ref{sec:comb}. The reason is that here we choose the Borel subgroup $B^-$ as the base point of the flag variety $X$, whereas in Section~\ref{sec:comb} the base point is $B$. This choice allows for a simpler comparison with the literature on cluster algebras in the present section, while the choice made in Section~\ref{sec:comb} is better suited for comparing with classical works on the geometry of flag, Schubert, and Richardson varieties.} corresponding to a Weyl group element $w \in W$. 
Moreover, let $\mathring{X}_w:= BwB^-/B^- \subseteq X_w$ be its \textit{(opposite) Schubert cell}. 
As in the previous example, $\mathring{X}_w$ is an affine space that can be realized as the spectra of a distinguished upper cluster algebra with non-invertible frozen variables \cite{geiss2011kac, goodearl2021integral}.
By \cite{AS14}, we have that $X_w$ is a log Fano compactification of $\mathring{X}_w.$

\begin{example}
{
\rm{
Let $G=\SL_4$ and $B$ (resp $B^-$) be the Borel subgroup of upper (resp. lower) triangular matrices. 
In this case, we have that $\mathcal{S}= \{s_1,s_2,s_3\}$ with standard notation \cite{BourbakiGroupes}.
Set $w:=s_2$ and 
\[
U(w_0s_2):= \left\{ \begin{pmatrix}
1 & a & b & c \\
0 & 1 & 0 & d \\
0 & 0 & 1 & e \\
0 & 0 & 0 & 1
\end{pmatrix}  \mid a,b,c,d,e \in \kk\right\}.
\]
The morphism $U(w_0s_2) \longrightarrow \mathring{X}_{s_2}$ defined by $u \longmapsto u s_2 B^-/B^-$ is an isomorphism. 
Through this identification, the ring $\mathcal{O}(\mathring{X}_{s_2})$ can be realized as the upper cluster algebra with non-invertible frozen variables $\ouclu(t)$ associated to the seed $t$ whose quiver is 
\[
\begin{tikzcd}[row sep=small]
 \blacksquare4 && \bigcirc1 \\
 & \blacksquare3 \\
 \blacksquare5 && \bigcirc2
 \arrow[from=1-3, to=1-1]
 \arrow[from=2-2, to=1-3]
 \arrow[from=2-2, to=3-3]
 \arrow[from=3-3, to=3-1]
\end{tikzcd}
\] 
and whose cluster variables are
\[
x_1= e, \qquad x_2= a, \qquad x_3= ad-c, \qquad x_4=c-ad-be, \qquad x_5=c.
\]
}
}
\end{example}

\subsubsection{Richardson varieties}\label{subsec:richardson}
Assume again that $X=G/B^-$. For a Weyl group element $v \in W$, we denote respectively by $\mathring{X}^v:= B^-vB^-/B^-$ and $X^v:= \overline{B^-vB^-/B^-}$ the \textit{Schubert cell} and the \textit{Schubert variety} associated to $v$.
Moreover, let $\mathring{X}^v_u:= \mathring{X}^v \cap \mathring{X}_u$ be  the \textit{open Richardson variety} associated to two Weyl group elements $v,u\in W$ such that $u \leq v$ in the Bruhat order. 
The last condition on $v$ and $u$ guarantees that  $\mathring{X}^v_u$ is non-empty.
The Richardson varieties are known to be smooth \cite{Richardson92cosets} and to be the spectra of upper cluster algebras by \cite{CGGLSS25, GLSB23, Leclerc16strata}.
We have that  $\mathring{X}^v_u$ is compactified by the \textit{Richardson variety} $X^v_u:=X^v \cap X_u$.
By \cite{KS14}, this yields a log Fano compactification of $\mathring{X}^v_u$.

\begin{example}
{
\rm{
Assume to be in the setting of Example \ref{ex:schubertSL3}, and let $v:=w_0$ and $u:=e.$ 
Then, the open Richardson variety $\mathring{X}^v_u$ is an open subset of  $\Omega:= BB^-/B^-.$
Identifying the latter space with the unipotent group $U$ as in Example \ref{ex:schubertSL3}, we have that
\[
\mathring{X}^v_u = \left\{ \begin{pmatrix}
1 & a & b \\
0 & 1 & c \\
0 & 0 & 1
\end{pmatrix}  \mid b \neq 0 \neq ac-b, \, \, ( a,b,c \in \kk)\right\}.
\]
Thus, the coordinate ring of the variety $\mathring{X}^v_u$ is the upper cluster algebra $\uclu(t)$ associated to  seed $t$ described in Example \ref{ex:schubertSL3}. 
Notice that the frozen cluster variables $x_1$ and $x_3$ are invertible in $\uclu(t)=\oo(\mathring{X}^v_u)$, contrary to the case of $\ouclu(t)=\oo(\Omega)$.
}
}
\end{example}

\subsubsection{Semisimple algebraic groups of type $E_8$, $F_4$, $G_2$ and their wonderful compactification}
Assume that $G$ as above is simple. 
Then, by \cite{FZ05}, the open double Bruhat cell $G^{w_0,w_0}:= Bw_0B \cap B^-w_0B^- \subseteq G$~\footnote{Here $w_0$ denotes the longest element of the Weyl group $W$.} is the spectrum of an upper cluster algebra.
If $G$ is of type $E_8$, $F_4$ or $G_2$, then $G$ is also adjoint \cite{BourbakiGroupes} and it therefore embeds in its \textit{Wonderful compactification} $X$ \cite{DeConcini83symmetric}.
Then, $X$ provides a smooth Fano compactification of $G^{w_0,w_0}.$

\subsubsection{Affinisation of rank 2 cluster varieties}\label{ex:affinisation-rank-2-cluster}

We now turn to the study of compactifications of rank-2 cluster varieties.
By \cite[Corollary 1.21]{FZ05}, spectra of rank 2 upper cluster algebras with no coefficients are either $\mathbb{G}_m^2$ or of the form
\[
\mathcal{A}^{{\rm{af}}}(a,b):= \Spec(\kk[x_1,x_2,x_3,x_4]/ (x_1x_3-1-x_2^a, \, \, x_2x_4-1-x_1^b))
\]
for some strictly positive integers $a,b$. We note that each $\mathcal{A}^{\rm af}(a,b)$ is a smooth affine variety. We provide a klt Fano compactification for each $a,b>0$.
Consider $T:=\pp^1\times \pp^1$.
Let $D_\ell:=\{\ell\}\times \pp^1$
and $B_\ell:=\pp^1\times \{\ell\}$, 
where $\ell\in \pp^1$.
Therefore, the boundary $B_T:=D_0+D_\infty+B_0+B_\infty$ 
is the torus invariant boundary of $T$.
Let $\mathcal{R}_a:=\{1,\mu_a,\mu_a^2,\dots,\mu_a^{a-1}\}$
be the set of $a$-th roots of unity in $D_0$.
Let $\mathcal{R}_b:=\{1,\mu_b,\mu_b^2,\dots,\mu_b^{b-1}\}$
be the set of $b$-th roots of unity in $B_0$.
Let $X_0(a,b)$ be the blow-up of 
$T$ along $\mathcal{R}_a$ and $\mathcal{R}_b$.
Let $B_0(a,b)$ be the strict transform of $B_T$ in $X_0(a,b)$. 
We argue that 
$X_0(a,b)\setminus B_0(a,b)\simeq  \mathcal{A}^{\rm af}(a,b)$.
Indeed, the rational map 
\begin{align} 
\phi\colon  \mathbb{G}_m^2  & \dashrightarrow 
\mathcal{A}^{\rm af}(a,b)\\ 
 (x_1,x_2)  & \mapsto \left(x_1,x_2,\frac{1+x_2^a}{x_1},\frac{1+x_1^b}{x_2}\right) 
\end{align} 
is an isomorphism onto its image and misses precisely the curves defined by 
$\{x_1=x_2^a+1=0\}$
and the curves defined by
$\{x_2=x_1^b+1=0\}$.
Thus, the map becomes an isomorphism after the blow-up, i.e., it induces an isomorphism
$X_0(a,b)\setminus B_0(a,b)\simeq \mathcal{A}^{\rm af}(a,b)$.
Assume that $a+b\geq 3$. 
Write $D'_0$ and $B'_0$ for the strict transforms of $D_0$ and $B_0$, respectively.
Therefore, we have 
\[
(D'_0+B'_0)\cdot D'_0 = 1-a, 
\quad 
(D'_0+B'_0)\cdot B'_0= 1-b, 
\text{ and }
(D'_0+B'_0)^2=2-a-b.
\]
Therefore, by Artin's contractibility criterion, we may contract both curves to a singular point 
$X_0(a,b)\rightarrow X(a,b)$.
The image of $D'_0+B'_0$ is a cyclic quotient singularity $x_0(a,b)$.
Let $B(a,b)$ be the image of $B_0(a,b)$ in $X(a,b)$. 
By construction, we have 
$X(a,b)\setminus B(a,b) \simeq \mathcal{A}^{\rm af}(a,b)$. 
The surface $X(a,b)$ is a projective surface of Picard rank $a+b$.
Furthermore, we have $-K_{X(a,b)}\simeq B(a,b)$.
The pair $(X(a,b),B(a,b))$ has log canonical singularities while the surface $X(a,b)$ has klt singularities. 
The divisor $B(a,b)$ intersects every curve on $X(a,b)$ positively, so $X(a,b)$ is a klt Fano surface. 
We conclude that $(X(a,b),B(a,b))$ is a cluster type pair, $X(a,b)$ is a klt Fano surface, and 
\[
X(a,b)\setminus B(a,b)\simeq \mathcal{A}^{\rm af}(a,b) 
\]
holds. Thus, $X(a,b)$ is a klt Fano compactification of the spectra 
$\mathcal{A}^{\rm af}(a,b)$ of a rank $2$ upper cluster algebra.

We conclude by explaining what happens when $a+b\leq 3$. 
The surface $X(2,1)$ is a smooth del Pezzo surface of degree $6$. The divisor $B(2,1)$ is the strict transform on $X(2,1)$ of the reduced sum of a conic and a line in $\pp^2$ such that the conic contains all the points that are blown-up. 
The surface $X(1,1)$ is a smooth del Pezzo surface of degree $7$. The divisor $B(1,1)$ is the strict transform on $X(1,1)$ of the reduced sum of three lines in $\pp^2$ such that the blown-up points are contained in two of such lines. 

We argue that for every $(a,b)$, the variety $X(a,b)$ is log Fano. Indeed, the divisor 
\[
A_0:= \frac{1}{a}D_0' + \frac{1}{b}B_0'+D_\infty+B_\infty 
\]
is ample, and so $(X_0(a,b),B_0(a,b)-A_0(a,b))$ is a klt Fano pair.
Thus, for each $a,b$, the variety $X(a,b)$ provides a klt Fano compactification of $\mathcal{A}^{\rm af}(a,b)$
while $X_0(a,b)$ provides a canonical log Fano comopactification of $\mathcal{A}^{\rm af}(a,b)$. This provides an explicit example for Corollary~\ref{introcor:comp-locally-acyclic}.

\subsection{Cluster type compactification of mutation semigroup algebras}

In this subsection, we provide explicit examples for Theorem \ref{introthm:msa-defs-agree}.

\subsubsection{From the open Schubert cell to an open Richardson variety}

Assume to be in the setting of Subsection \ref{subsub:Schubert flag} and that $X=G/B^-$ is a complete flag variety of $G$.
 Set 
\[
D^+_s:= \overline{BsB^-/B^-}, \qquad D^-_s:= \overline{B^-sw_0B^-/B^-} \qquad (s \in \mathcal{S}),
\]
and
\[
D^+:= \sum_{s \in \mathcal{S}}D^+_s, \qquad D^-:= \sum_{s \in \mathcal{S}}D^-_s, \qquad D:=D^+ + D^-.
\]
In other words, $D^+$ (resp. $D^-$) is the sum of the $B$-stable (resp. $B^-$-stable) divisors of $X.$
By the Borel-Weil theorem, any divisor $A$ such that $D^+ \leq A \leq D$ is ample.
The fact that the pair $(X,D)$ is cluster type follows from the proof of Corollary~\ref{introcor:var-from-comb-ct}.(3).
Therefore, Theorem \ref{introthm:msa-defs-agree} implies that for any such divisor $A$, the ring $\oo(X \setminus A)$ is a mutation semigroup algebra. 
Let us verify this last assertion by hand.
If $A= D^+$, then $X \setminus D^+= \mathring{X}_e=BB^-/B^-$. Let us identify the ring $\mathcal{O}( X \setminus D^+)$ with the upper cluster algebra with non-invertible frozen variables considered in  \cite{geiss2011kac, goodearl2021integral}, as discussed in Section \ref{subsub:Schubert flag}.
We recall that the operation of tacking the divisor sets a bijection between the set of frozen  variables of this cluster structure and the restriction of the divisors $D^-_s \, \, (s \in \mathcal{S})$ to $X \setminus D^+$.
Therefore, we can denote  by $x_s \, \, (s \in \mathcal{S})$ the frozen cluster variable defined by
$\ddivv(x_s)=(D^-_s)_{| X \setminus D^+}.$
We then have that
\[
\displaystyle \oo\bigl( X \setminus D^+)_{\prod_{s \in \mathcal{S'}} x_s} = \oo\bigl( X \setminus \bigl(D^+ + \sum_{s \in \mathcal{S}'} D^-_s \bigr) \bigr) \qquad (\mathcal{S}' \subseteq \mathcal{S}).
\]
It is well known that localizing an upper cluster algebra with non-invertible frozen variables (such as $\mathcal{O}_X(X \setminus D^+)$) at a subset of its frozen variables yields the same upper cluster algebra, but in which the localized variables are invertible.
Hence, the previous discussion shows that for any $D^+ \leq  A \leq D$, the ring $\oo(X \setminus A, \, \mathcal{O}_X)$ is an upper cluster algebra in which some frozen variables may be non-invertible.
Notice that when $A=D$, we have that $X \setminus D= BB^-/B^- \cap B^-w_0B^-/B^-$ is equal to the open Richardson variety $\mathring{X}^{w_0}_e$, and the previous discussion shows that $\mathcal{O}(X \setminus D)$ is an upper cluster algebra.

\begin{example}
\rm{
Let us make explicit the previous discussion in the setting of Example \ref{ex:schubertSL3}. 
Denote (with standard notation) by $s_1$ and $s_2$ the simple reflections of the Weyl group.
We then have that 
$x_1=x_{s_2}$ and $x_3=x_{s_1}$, where $x_1$ and $x_3$ are the frozen variables of the seed $t$ described in Example \ref{ex:schubertSL3}.
Moreover, for any divisor $D^+ \leq A \leq D$, the coordinate ring of $X \setminus A$ is an upper cluster algebra with non-invertible frozen variables associated to the seed $t$ and in which the frozen cluster variables are invertible or not, depending on $A$. 
In particular, computing that mutating the seed $t$ at the vertex $2$ yields the new cluster variable $x_2'=c$, and using the fact that the seed $t$ is of maximal rank, we get the following list of equalities:
\[
\begin{array}{l}
\oo\bigl( X \setminus D^+ \bigr)= \kk[a^{\pm 1}, b, ac-b ] \cap \kk[c^{\pm 1}, b, ac-b ], \\[0.5em]
\oo\bigl( X \setminus (D^+ + D^-_{s_1}) \bigr)= \kk[a^{\pm 1}, b^{\pm 1}, ac-b ] \cap \kk[c^{\pm 1}, b^{\pm 1}, ac-b ],\\ [0.5em]
\oo\bigl( X \setminus (D^+ + D^-_{s_2}) \bigr)= \kk[a^{\pm 1}, b, (ac-b)^{\pm 1} ] \cap \kk[c^{\pm 1}, b, (ac-b)^{\pm 1}], \\[0.5em]
\oo\bigl( X \setminus (D^+ + D^-_{s_1}+ D^-_{s_2}) \bigr)= \kk[a^{\pm 1}, b^{\pm 1}, (ac-b)^{\pm 1} ] \cap \kk[c^{\pm 1}, b^{\pm 1}, (ac-b)^{\pm 1}].
\end{array}
\]
}
\end{example}

\subsection{Cluster algebra structures on Cox rings}

In this subsection, we study mutation semigroup algebra structures on the Cox rings of projective varieties.

\subsubsection{Minimal desingularisations of cubic surfaces}
\label{subsec:cubic}
In the case of  minimal desingularisations of cubic surfaces in $\pp^3$ with at most rational double points, the Cox ring is known \cite{DHHKL15}. 
Some of these Cox rings are indeed mutation semigroup algebras.
In order to give explicit examples, we fix homogeneous coordinates $Z_0,Z_1,Z_2, Z_3$ on $\pp^3.$

\begin{example}
{
\rm{
Let $Y \subseteq \pp^3$ be the subvariety defined by the homogeneous ideal generated by $Z_0Z_1Z_2-Z_2(Z_1+Z_2)(Z_1+Z_3)$ and $X$ be its minimal desingularisation.
By \cite[Theorem 3.1, (ix)]{DHHKL15}, the Class group of $X$ is isomorphic to $\zz^7$ and the Cox ring $\Cox(X)$ has a presentation given by eleven generators $T_1, \dots T_{11}$ and two relations
\[
T_6T_9-T_1T_2T_7T_8-T_5T_{11}, \qquad T_7T_{10}-T_1T_4T_8^2-T_3T_6T_{11}.
\]
The generators $T_i$ are $\Cl(X)$-homogeneous and their degree is given by the columns of the matrix in \cite[Theorem 3.1, (ix)]{DHHKL15}.
Let $t$ be the seed of the fracrtion field of $\Cox(X)$ defined by the quiver
\[
\begin{tikzcd}[sep=scriptsize]
 {{\blacksquare 5}} && {{\blacksquare 1}} \\
 & {{\bigcirc 6}} && {{\bigcirc 7}} & {{\blacksquare 4}} \\
 {{\blacksquare 2}} && {{\blacksquare 11}} && {{\blacksquare 3}} \\
 && {{\blacksquare 8}}
 \arrow[from=1-1, to=2-2]
 \arrow[from=2-2, to=1-3]
 \arrow[from=2-2, to=2-4]
 \arrow[from=2-2, to=3-1]
 \arrow[from=2-2, to=4-3]
 \arrow[from=2-4, to=1-3]
 \arrow[from=2-4, to=2-5]
 \arrow[from=2-4, to=4-3]
 \arrow[shift left, from=2-4, to=4-3]
 \arrow[from=3-3, to=2-2]
 \arrow[from=3-3, to=2-4]
 \arrow[from=3-5, to=2-4]
\end{tikzcd}
\]
and with cluster variables $x_i:=T_i $ for $i \in I:=\{1,2,3,4,5,6,7,8,11\}.$
}
\begin{lemma} 
\label{lem:Cox cubic}
The equalities $\overline{\clu}(t)=\ouclu(t)=\Cox(X)$ hold.
\end{lemma}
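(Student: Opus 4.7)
The plan is to identify $\Cox(X)$ with both the cluster algebra $\overline{\clu}(t)$ and the upper cluster algebra $\ouclu(t)$, deducing the coincidence of the two from an acyclicity argument applied to the two-vertex mutable part of the quiver.

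First, I would check that the two defining relations of $\Cox(X)$ are exactly the exchange relations of the seed $t$ at the mutable vertices $6$ and $7$. Reading the quiver, the arrows incident to vertex $6$ are incoming from $\{5,11\}$ and outgoing to $\{1,2,7,8\}$, so the exchange relation at $6$ reads $x_6\,x_6' = x_5 x_{11} + x_1 x_2 x_7 x_8$, which matches the first defining relation of $\Cox(X)$ upon setting $x_6' = T_9$. Analogously, at vertex $7$ the incoming arrows come from $\{3,6,11\}$ and the outgoing arrows go to $\{1,4\}$ with a doubled arrow to $\{8\}$, giving $x_7\,x_7' = x_3 x_6 x_{11} + x_1 x_4 x_8^2$, matching the second defining relation with $x_7' = T_{10}$.

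Second, I would observe that the principal part $B^\circ$ restricted to $\{6,7\}$ is a single arrow $6 \to 7$, so the cluster structure is of finite type $A_2$ and its mutable quiver is acyclic. By the Berenstein--Fomin--Zelevinsky theorem on acyclic cluster algebras (or by Muller's theorem for locally acyclic cluster algebras), this already gives $\overline{\clu}(t) = \ouclu(t)$, reducing the problem to the single equality $\overline{\clu}(t) = \Cox(X)$.

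Third, I would establish the double inclusion $\overline{\clu}(t) = \Cox(X)$ by exhibiting all cluster variables explicitly. Since $B^\circ$ is of type $A_2$, there are precisely five mutable cluster variables, four of which are already identified: $T_6, T_7, T_9, T_{10}$. For the fifth, I would mutate the quiver at $6$, compute the updated column at vertex $7$ of the exchange matrix, and read off the corresponding exchange relation
\[
T_7''\,T_7 = T_3 T_5 T_{11}^{2} + T_1 T_4 T_8^{2}\,T_9.
\]
Substituting $T_1 T_4 T_8^{2} = T_7 T_{10} - T_3 T_6 T_{11}$ from the second defining relation and $T_5 T_{11} - T_6 T_9 = -T_1 T_2 T_7 T_8$ from the first, the right-hand side collapses to $T_7(T_9 T_{10} - T_1 T_2 T_3 T_8 T_{11})$, so the fifth cluster variable equals $T_9 T_{10} - T_1 T_2 T_3 T_8 T_{11}$, a polynomial in $T_1,\dots,T_{11}$ and hence an element of $\Cox(X)$. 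All cluster variables of $\overline{\clu}(t)$ therefore lie in $\Cox(X)$; conversely, each generator $T_i$ of $\Cox(X)$ is a (frozen or mutable) cluster variable of some seed in the mutation class of $t$, giving the reverse inclusion.

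The main technical obstacle is Step 3: correctly tracking how the non-principal columns of the exchange matrix transform under $\mu_6$ so that the second-step exchange relation at $7$ can be written explicitly, and then recognizing the resulting Laurent polynomial as a genuine polynomial in the $T_i$ by a simultaneous use of both defining relations of $\Cox(X)$. Once the fifth cluster variable is pinned down, the remainder of the argument is essentially formal.
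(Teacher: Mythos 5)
Your computations are correct and agree with the paper's: the two defining relations of $\Cox(X)$ are exactly the exchange relations of $t$ at the vertices $6$ and $7$ (so $\mu_6(x)_6=T_9$, $\mu_7(x)_7=T_{10}$), and the second-step exchange at $7$ after $\mu_6$ indeed reads $T_7T_7''=T_3T_5T_{11}^2+T_1T_4T_8^2T_9$, which simplifies via the two relations to $T_7''=T_9T_{10}-T_1T_2T_3T_8T_{11}$; this is the same computation the paper performs for the seed $\mu_7\circ\mu_6(t)$. The double inclusion $\overline{\clu}(t)=\Cox(X)$ via the finite-type $A_2$ enumeration of the five mutable cluster variables is a reasonable (and essentially correct) variant of the paper's observation that all generators $T_i$ are cluster variables, although you should also record, as the paper does, that the seed $t$ is well defined, i.e.\ that the $T_i$ $(i\in I)$ are algebraically independent and generate the fraction field (this follows from the localization of $\Cox(X)$ at $\prod_{i\in I}T_i$ being a Laurent polynomial ring).

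The genuine gap is your second step. The equality you need is $\overline{\clu}(t)=\ouclu(t)$ for the cluster and upper cluster algebras \emph{with non-invertible frozen variables}, and the results you cite do not give this. The Berenstein--Fomin--Zelevinsky theorem that acyclic cluster algebras equal their upper cluster algebras, as well as Muller's locally acyclic $\clu=\uclu$ theorem, are formulated over a ground ring in which the frozen variables are inverted (this is also how the present paper uses Muller's result, namely for $\clu=\uclu$, not for $\overline{\clu}=\ouclu$). Passing from $\clu=\uclu$ to $\overline{\clu}=\ouclu$ is not formal: an element of $\ouclu$ is only known to be a polynomial in cluster variables divided by a monomial in frozen variables, and one must argue divisibility in $\overline{\clu}$. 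This is precisely the point the paper's proof is designed to circumvent: it invokes Arzhantsev's theorem that $\Cox(X)$ is factorial (since $\Cl(X)\cong\zz^7$ is free and finitely generated) and then applies the Geiss--Leclerc--Schr\"oer criterion for factorial algebras, using the two \emph{disjoint} seeds $t$ and $\mu_7\circ\mu_6(t)$ whose cluster variables all lie in $\Cox(X)$, to obtain $\overline{\clu}(t)=\ouclu(t)=\Cox(X)$ in one stroke. To repair your argument you would either need a version of the acyclic $\clu=\uclu$ theorem valid without inverting frozen variables (with whatever extra hypotheses it requires, carefully verified here), or you should replace your step 2 by the factoriality-plus-disjoint-seeds argument; with your explicit fifth cluster variable in hand, the latter is exactly the paper's proof.
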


\begin{proof}
First, notice that the localisation of $\Cox(X)$ at the product of the variables $T_i \, \, (i \in I)$ is isomorphic to the Laurent polynomial ring in the $T_i \, \, (i \in I)$.
Hence, the cluster variables $x_i \, \, (i \in I)$ are algebraically independent, which implies that the seed $t$ is well defined.
Next, observe that by mutating the seed $t$ at the mutable vertices $6$ and $7$, we get the new cluster variables
\[
\mu_6(x)_6=T_9, \qquad \mu_7(x)_7=T_{10}.
\]
It follows that $\Cox(X)$ is contained in the cluster algebra with non-invertible coefficients $\overline{\mathcal{A}}(t)$.
Moreover, a direct computation shows that the mutable cluster variables of the seed $\mu_7 \circ \mu_6(t)$ are
\[
\mu_7 \circ \mu_6(x)_6=T_9, \qquad \mu_7 \circ \mu_6(x)_7=T_9T_{10}- T_1T_2T_3T_8T_{11}.
\]
Hence, the seeds $t$ and $\mu_7 \circ \mu_6(t)$ are disjoint: in other words, they have no common mutable cluster variable. 
Moreover, the cluster variables of $t$ and $\mu_7 \circ \mu_6(t)$ belong to $\Cox(X)$.
As the  class group $\Cl(X)$ of $X$ is free and finitely generated, the ring $\Cox(X)$ is factorial \cite{Arz09}. 
Then, the statement follows from \cite[Theorem 1.4]{geiss2013factorial}.
\end{proof}
Notice that the cluster algebra $\overline{\clu}(t)=\ouclu(t)$ admits a $\Cl(X)$-grading obtained by declaring that the degree of the cluster variables of $t$ are their degree as homogeneous elements of $\Cox(X)$.
Then, the equality of Lemma \ref{lem:Cox cubic} is an equality of $\Cl(X)$-graded algebras. 
}
\end{example}

\begin{example}
{ 
\rm{
Let $Y \subseteq \pp^3$ be the subvariety defined by the homogeneous ideal generated by $Z_0Z_1Z_2+Z_3^2(Z_1+Z_2+Z_3)$ and $X$ be its minimal desingularisation. 
The $\Cl(X)$-graded ring $\Cox(X)$ is described in \cite[Theorem 3.1, (viii)]{DHHKL15}.
In the notation of \cite{DHHKL15}, let $t$ be the seed of the fraction field of $\Cox(X)$ defined by the quiver 
\[\begin{tikzcd}[sep=scriptsize]
 \blacksquare4 && \blacksquare7 && \blacksquare11 \\
 \blacksquare3& \bigcirc5 && \bigcirc10 & \blacksquare2 \\
 & \blacksquare1
    && \blacksquare8
 \arrow[from=1-5, to=2-4]
 \arrow[from=2-1, to=2-2]
 \arrow[from=2-2, to=1-1]
 \arrow[from=2-2, to=1-3]
 \arrow[from=2-2, to=2-4]
 \arrow[from=2-4, to=1-3]
 \arrow[from=2-4, to=2-5]
 \arrow[from=2-4, to=3-2]
 \arrow[from=2-4, to=3-4]
 \arrow[from=3-2, to=2-2]
 \arrow[from=3-4, to=2-2]
 \arrow[shift left, from=3-4, to=2-2]
\end{tikzcd}\]
and with cluster variables $x_i:=T_i$ for $i \in I:= \{1,2,3,4,5,7,8,10,11\}$.
We have an equality of $\Cl(X)$-graded algebras
\[
\overline{\clu}(t)= \ouclu(t)=\Cox(X),
\]
where the cluster and upper cluster algebras of the seed $t$ are graded by assigning to the initial cluster variables the degree given by the corresponding columns of the matrix in \cite[Theorem 3.1, (viii)]{DHHKL15}.
The proof of the previous assertion is completely analogous to that of Lemma \ref{lem:Cox cubic}, and is therefore omitted.
}
}
\end{example}

\subsubsection{Blow-ups of projective spaces}
\label{subsec:blow-up-pn}
We now study the case of blow-ups of the projective space $\pp^n$ at $n+2$ very general points. 
In this case, we discuss two cluster structures.

\medskip

We start by introducing some notation.
Let $n \geq 2$ be a positive integer.
We denote by $\pi: X:=X_n \longmapsto \pp^n$ the blow up of $\pp^n$ at  $n+2$ points $q_i \, \, (1 \leq i \leq n+2)$ in general position. 
Let $Z_0, Z_1, \dots Z_n$ be the homogeneous coordinates of $\pp^n.$
Up to the action of $\PSL_{n+1}$, we can assume that the point $q_i \, \, (1 \leq i \leq n)$ is defined by the equations $Z_0=Z_i$ and $Z_j=0$ for $j \neq i$, and that
\[
 q_{n+1}=[1 : -1 : -1 :  \cdots : -1], \qquad  q_{n+2}=[1: 0 : 0 : \cdots : 0].
\]
Consider the closed irreducible subvariety $E_0:=\pi^{-1}( \{Z_0=0\})$ of $X$ and  denote by $E_i:= \pi^{-1}(q_i) \, \, (1 \leq i \leq n+2)$ the exceptional divisor over $q_i$.
Notice that the divisors $E_i \, \, (0 \leq i \leq n+2)$  freely generate the class group $\Cl(X)$ of $X.$
Let $U \subset X$ be the complement of the support of the $E_i$. The morphism $\pi$ restricts to an isomorphism between $U$ and the complement in $\pp^n$ of the points $q_i$ and of the vanishing locus of $Z_0.$
This allows us to identify $U$ with a big open subset of an affine space of dimension $n$ with coordinates 
\[
z_i:= \frac{Z_i}{Z_0} \qquad (1 \leq i \leq n).
\]
We are now in a position to describe two cluster structures on $\Cox(X)$.

\subsubsection{The Grassmannian cluster structure}
It is well known that the Cox ring of $X$ is isomorphic to the Cox ring of the Grassmannian ${\rm{Gr}}(2,n+3)$ (see, for instance \cite[Remark 3.9]{Castravet2006Hilbert}).
Moreover, it is a foundational result of the theory of cluster algebras that $\Cox({\rm{Gr}}(2,n+3))$ has a distinguished cluster algebra structure.
The seeds of this cluster structure are in bijection with the triangulations of a convex $(n+3)$-gon, and the cluster variables are the Pl\"ucker coordinates. 
We refer the reader to \cite{FZ02} for more details.

\subsubsection{Cluster structure via minimal monomial lifting}

 As the ring $\mathcal{O}(U)$ is a polynomial ring in the variables $z_i$, it admits various structures of cluster algebra. 
 For instance, let us consider the seed $t$ consisting of the quiver:
    \[
    \begin{tikzcd}
 { \bigcirc 1} & { \bigcirc 2} & \cdots & { \bigcirc (n-1)} & { \blacksquare n},
 \arrow[from=1-2, to=1-1]
 \arrow[from=1-3, to=1-2]
 \arrow[from=1-4, to=1-3]
 \arrow[from=1-5, to=1-4]
\end{tikzcd}
\]
and whose cluster $x=(x_1, \dots , x_n)$ is defined as follows. We introduce the auxiliary notation $x_{-1}:= 0$ and $ x_{0}:= 1$, and  we set
\[
x_{k+1}:=z_{k+1}x_k-x_{k-1} \qquad (1 \leq k \leq n).
\]
Then, by \cite[Section 7.1]{geiss2013factorial}, we have that $\overline{\mathcal{U}}(t)= \mathcal{O}(U).$ 
By a general procedure called \textit{Minimal monmomial lifting} \cite{Francone2024Cox}, we can upgrade the cluster structure on $\mathcal{O}(U)$ given by the seed $t$ to a $\Cl(X)$-graded cluster structure in the ring $\Cox(X)$ given by a seed $\lif t$.
For the convenience of the reader, we recall the definition of the seed  $\lif t$ in three steps below.
\begin{enumerate}
\item The set of vertices of $\lif t$ is $\{1,2, \dots, 2n+3\}$.
The vertices of the set $\{1,2, \dots, n-1\}$ are mutable, the remaining ones are frozen.
Heuristically, a vertex $1 \leq i \leq n$ corresponds to the vertex of the seed $t$ with the same label, while a vertex $n+1 \leq j \leq 2n+3$ corresponds to the divisor $E_{j - n+1}$ of $X.$ 
\end{enumerate}
Let $\val_{E_i} : \kk(X)^\times \longmapsto \zz \, \, (0 \leq j \leq n+2)$ be the discrete valuation associating to a rational function on $X$ its order of vanishing along the Cartier divisor $E_j$.
Let $\nu$ be the $(n+3) \times n$ integer matrix given the formula
\begin{equation}
\label{eq:minmatrixBl}
\nu_{ji}:= - \val_{E_{j-1}}(x_i).
\end{equation}
\begin{enumerate}
\item[(2)] For a vertex $i$ of the seed $\lif t$, the corresponding cluster variable $\lif x_i$ is defined as follows:
\[
    \lif x_i := \begin{cases}
        x_i  \in \oo \bigl( Z, \, \,   \mathcal{O} \bigl( \sum_{1 \leq j \leq n+3} \nu_{ji} E_{j-1} \bigr) \bigr) & \text{if} \quad 1 \leq i \leq n, \\[0.5em]
        1 \hspace{0,15cm} \in \oo\bigl( Z, \, \, \mathcal{O}(E_{j-n+1}) \bigr) & \text{if} \quad n+1 \leq i \leq 2n+3.
    \end{cases}
\]
\end{enumerate}
Let $B$ be the generalized exchange matrix of the seed $t$. 
Recall that $B$ is an integer matrix of size $n \times (n-1)$.
\begin{enumerate}
\item[(3)] The exchange matrix $\lif B$ of the seed $\lif t$ is the $(2n+3) \times (n-1)$ integer matrix given by:
\[
\lif B:= \begin{pmatrix}
B\\
- \nu \cdot B
\end{pmatrix}.
\]
\end{enumerate}
Notice that the cluster variables of the seed $\lif t$ are by definition homogeneous elements of the $\Cl(X)$-graded ring $\Cox(X)$. In particular, we have that 
\begin{equation}
\label{eq:degreecoxBL}
{\rm{deg}}(\lif x_i)= \begin{cases}
\sum_{1 \leq j \leq n+3} \nu_{ji}[E_{j-1}] &  \mbox{if} \quad 1 \leq i \leq n,\\[0.5em]
[E_{j-n+1}] & \mbox{if} \quad n+1 \leq i \leq 2n+3,
\end{cases}
\end{equation}
where $[E_j] \, \, (0 \leq j \leq n+2)$ denotes the class of the divisor $E_j$ in $\Cl(X).$
Moreover, the seed $\lif t$ is $\Cl(X)$-graded by the assignment given in Equation  \eqref{eq:degreecoxBL}.
Hence, the upper cluster algebra $\ouclu(\lif t)$ inherits a $\Cl(X)$-grading with respect to which any cluster variable is homogeneous.

\begin{proposition}
\label{prop: cluster cox Bl}
The $\Cl(X)$-graded upper cluster algebra $\ouclu(\lif t)$ is a graded subalgebra of $\Cox(X)$. Moreover, if we denote by $\Sigma$ the product of the frozen cluster variables $\lif x_j \, \, (n+1 \leq j \leq 2n+3)$, then we have that $\ouclu(\lif t)_\Sigma= \Cox(X)_\Sigma$.
\end{proposition}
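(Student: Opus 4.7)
The plan is to split the proposition into its two assertions and verify each in turn, relying on the standard structure theory of Cox rings together with the formal setup of minimal monomial lifting.

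For the graded inclusion $\ouclu(\lif t)\subseteq \Cox(X)$, I would first note that the initial cluster variables $\lif x_i$ are $\Cl(X)$-homogeneous elements of $\Cox(X)$, with degrees specified by Equation~\eqref{eq:degreecoxBL}. The task is to propagate this membership through arbitrary sequences of mutations. Because $\Cl(X)$ is free and finitely generated, the Cox ring is factorial by \cite{Arz09}. Each mutation of $\lif t$ at a mutable vertex $k$ writes the new cluster variable as a ratio $(M_+ + M_-)/\lif x_k$ of two $\Cl(X)$-homogeneous monomials divided by $\lif x_k$; the valuations $\nu_{ji}$ in Equation~\eqref{eq:minmatrixBl} are tailored precisely so that the class of the numerator in $\Cl(X)$ exceeds that of $\lif x_k$ by the prescribed degree of the mutated variable, and a divisibility argument in the factorial ring $\Cox(X)$, analogous to the one used in Lemma~\ref{lem:Cox cubic}, delivers the desired regularity. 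By induction, every cluster variable of every seed mutation-equivalent to $\lif t$ lies in $\Cox(X)$ with its prescribed degree, and hence so does every element of $\ouclu(\lif t)$.

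For the localization equality, I would appeal to the standard torsor description of the Cox ring. Writing $U = X \setminus \bigcup_{j=0}^{n+2} E_j$ and using that the divisors $E_0,\dots,E_{n+2}$ freely generate $\Cl(X)$, the canonical sections $\lif x_{n+1},\dots,\lif x_{2n+3}$ corresponding to these divisors give an isomorphism
\[
\Cox(X)_\Sigma \;\cong\; \oo(U)\bigl[\lif x_{n+1}^{\pm 1}, \dots, \lif x_{2n+3}^{\pm 1}\bigr].
\]
On the cluster side, inverting the frozen variables in $\ouclu(\lif t)$ yields the upper cluster algebra $\uclu(\lif t)$ with all frozens inverted. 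By the construction of the minimal monomial lifting, setting the $\lif x_{n+j}$ equal to $1$ identifies the exchange matrix $\lif B$ with $B$ and each seed of $\lif t$ with the corresponding seed of $t$. Combining Theorem~\ref{thm:upper finite} (which applies because $\lif t$ inherits maximal rank from $t$) with the identification $\overline{\uclu}(t)=\oo(U)$ from \cite[Section 7.1]{geiss2013factorial} then gives
\[
\ouclu(\lif t)_\Sigma \;=\; \overline{\uclu}(t)\bigl[\lif x_{n+1}^{\pm 1}, \dots, \lif x_{2n+3}^{\pm 1}\bigr] \;=\; \Cox(X)_\Sigma.
\]

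The main obstacle is the first assertion: controlling the mutations so as never to leave $\Cox(X)$. While the graded Laurent phenomenon guarantees that every cluster variable is a homogeneous element of the fraction field of the required degree, proving regularity in $\Cox(X)$ requires reconciling the exchange matrix $\lif B$ with the valuations $\nu_{ji}$ in a way that engages the factoriality of $\Cox(X)$. The cleanest route is to invoke the general machinery of \cite{Francone2024Cox}, which was developed precisely to promote a cluster structure on a big open subset to a $\Cl(X)$-graded cluster structure on the Cox ring of a projective compactification, provided the base seed has the relevant properties (maximal rank, primitivity, and $\overline{\uclu}(t)=\oo(U)$).
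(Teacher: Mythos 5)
Your ``cleanest route'' remark---invoking the minimal monomial lifting machinery of \cite{Francone2024Cox}---is in fact the paper's entire proof: the proposition is deduced directly from \cite[Theorem 3.6]{Francone2024Cox}, and the substance of the paper's argument is the verification of that theorem's hypotheses, which your proposal omits or misstates. The relevant hypotheses are not ``maximal rank, primitivity, and $\overline{\uclu}(t)=\oo(U)$'' but rather: $\Pic(U)=\{0\}$ and $\mathcal{O}(U)^\times=\kk^\times$ (immediate since $U$ is a big open subset of an affine space), $X\setminus U=\bigcup_{0\leq k\leq n+2}E_k$ of pure codimension one, maximal rank of the exchange matrix $B$, and---crucially---two coprimality conditions: distinct cluster variables $x_i,x_j$ of $t$ are coprime on $U$, and $x_k$ and $\mu_k(x)_k$ are coprime on $U$ for every mutable $k$. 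The paper obtains these from \cite[Lemma 4.0.9]{Francone2023minimal} together with the open embedding $U\hookrightarrow\Spec(\mathcal{O}(U))=(\pp^n)_{Z_0}$. Without this verification, the appeal to the machinery is not yet a proof.

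The self-contained argument you sketch for the first assertion has a genuine gap: an induction showing that every cluster variable of every seed mutation-equivalent to $\lif t$ lies in $\Cox(X)$ (via factoriality and divisibility, as in Lemma~\ref{lem:Cox cubic}) proves only $\overline{\clu}(\lif t)\subseteq\Cox(X)$, not $\ouclu(\lif t)\subseteq\Cox(X)$. The upper cluster algebra is an intersection of partially Laurent polynomial rings and in general strictly contains the cluster algebra, so regularity of the cluster variables does not control it; one must compare $\ouclu(\lif t)$ with $\Cox(X)$ through divisorial valuations or height-one localizations, which is precisely what the coprimality hypotheses of \cite[Theorem 3.6]{Francone2024Cox} are designed to enable. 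The localization step has a similar issue: you assert that inverting $\Sigma$ turns $\ouclu(\lif t)$ into $\overline{\uclu}(t)[\lif x_{n+1}^{\pm1},\dots,\lif x_{2n+3}^{\pm1}]$, but first, $\lif x_n$ remains non-invertible (only the lifted frozens occur in $\Sigma$, as the paper notes right after the proposition), and second, the compatibility of the lifted seed with this localization is itself part of the output of the lifting machinery rather than an independent input, so as written this step begs the question. Your identification $\Cox(X)_\Sigma\simeq\oo(U)[\lif x_{n+1}^{\pm1},\dots,\lif x_{2n+3}^{\pm1}]$ via the torsor description and the basis $[E_0],\dots,[E_{n+2}]$ of $\Cl(X)$ is fine, but it carries the argument only halfway.
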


\begin{proof}
Let us verify that the hypothesis needed to apply \cite[Theorem 3.6]{Francone2024Cox} are fulfilled.
As $U$ is isomorphic to a big open subset of an affine space, we have that $\Pic(U)=\{0\}$ and $\mathcal{O}(U)^\times = K^\times.$
Moreover, $X \setminus U= \bigcup_{0 \leq k \leq n+2} E_k$ is of pure codimension one. 
The  matrix $B$ is clearly of maximal rank.
Hence, the seed $t$ is of maximal rank. 
Then, let us consider two distinct cluster variables $x_i$ and $x_j$ of $t$. 
The functions $x_i$ and $x_j$ are coprime on $\Spec( \mathcal{O}(U))$ by \cite[Lemma 4.0.9]{Francone2023minimal}. 
As the natural morphism $U \longmapsto \Spec( \mathcal{O}(U))= (\pp^n)_{Z_0}$ is an open embedding, we deduce that the functions $x_i$ and $x_j$ are coprime on $U$.
The same argument shows that for any mutable vertex $k$ of $t$, the cluster variables $x_k$ and $\mu_k(x)_k$ are coprime on $U$.
Then, the proposition follows directly from \cite[Theorem 3.6]{Francone2024Cox}.
\end{proof}

Notice that $\ouclu(\lif t)_\Sigma$ is an upper cluster algebra in which the only non-invertible frozen variable is $\lif x_n.$ Hence, Proposition \ref{prop: cluster cox Bl} implies that 
\[
\ouclu(\lif t) \subseteq \Cox(X) \subseteq \uclu(\lif t)_\Sigma.
\]
It is therefore natural to ask if $\Cox(X)$ can be identified with a partial compactification $\uclu(\lif t)^{\bf v}$ of the upper cluster algebra $\uclu(\lif t)$ associated to a collection of tropical points ${\bf v}.$

\section{More preliminaries, propositions, and lemmas}\label{sec:more-prel}

In this section, we collect more technical preliminaries, propositions, and lemmas. For the singularities of the minimal model program, we refer the reader to~\cite{Kol13}. For the classic statements of the minimal model program with scaling, we refer the reader to~\cite{BCHM10}.

\subsection{Birational geometry} In this subsection, we recall some basic definitions from birational models, birational modifications, and Calabi--Yau pairs.

\begin{definition}
{\em 
Let $(X,B)$ and $(Y,B_Y)$ be two pairs. 
We say that $(X,B)$ and $(Y,B_Y)$ are {\em crepant birational equivalent} if the following conditions are satisfied:
\begin{enumerate}
\item there is a birational map $\phi\colon X\dashrightarrow Y$, and 
\item for a resolution of indeterminancy 
$p\colon Z\rightarrow X$ and $q\colon Z\rightarrow Y$ of $\phi$, we have 
\[
p^*(K_X+B)=q^*(K_Y+B_Y).
\]
\end{enumerate} 
}
\end{definition}

\begin{definition}
{\em 
Let $(X,B)$ be a log canonical pair.
We say that $(X,B)$ has {\em divisorially log terminal singularities} if 
there is an open set $U\subset X$ satisfying:
\begin{enumerate}
\item $X$ is smooth on $U$,
\item the divisor $B$ has simple normal crossing singularities on $U$, and 
\item every log canonical center of $(X,B)$ intersects $U$ non-trivially.
\end{enumerate}
We may say that $(X,B)$ has {\em dlt} singularities to abbreviate divisorially log terminal.
}
\end{definition}

\begin{definition}
{\em 
Let $(X,B)$ be a log canonical pair.
A {\em log canonical place} of $(X,B)$ is a prime divisor $E$ on a higher projective birational model 
$\pi\colon Y\rightarrow X$ for which $a_E(X,B)=0$.
A {\em log canonical center} of $(X,B)$ is the image $\pi(E)$ on $X$ of a log canonical place $E$ of $(X,B)$.
}
\end{definition}

\begin{definition}
{\em 
Let $(X,B)$ be a log canonical pair.
We say that $\phi\colon Y \rightarrow X$ is  a {\em divisorially log terminal modification} if the following conditions are satisfied:
\begin{enumerate}
\item every prime divisor $E$ on $Y$ which is exceptional over $X$ is a log canonical place of $(X,B)$; and 
\item if $\phi^*(K_X+B)=K_Y+B_Y$, then $(Y,B_Y)$ is a dlt pair. 
\end{enumerate} 
We may say that $(Y,B_Y)$ is a {\em dlt modification} of $(X,B)$. If furthermore $Y$ is a $\qq$-factorial variety, we may say that $(Y,B_Y)$ is a $\qq$-factorial dlt modification of $(Y,B_Y)$.
}
\end{definition}

The following theorem is well-known to the experts (see, e.g.,~\cite[Theorem 2.9]{FS20}).

\begin{theorem}\label{thm:dlt-mod}
Any log canonical pair $(X,B)$ admits a dlt modification. Furthermore, the dlt modification can be chosen to be $\qq$-factorial.
\end{theorem}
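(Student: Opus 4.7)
The plan is to produce the dlt modification by first resolving singularities, then enhancing the boundary so that every exceptional divisor appears with coefficient one, and finally running a relative minimal model program over $X$ to contract precisely the divisors that are not log canonical places.

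First, I would take a projective log resolution $\pi : Y \to X$ of $(X,B)$, arranging so that $Y$ is smooth and $\pi_*^{-1}B \cup \mathrm{Exc}(\pi)$ is simple normal crossing. Let $E_1,\dots,E_r$ be the $\pi$-exceptional prime divisors and set $a_i := a_{E_i}(X,B)$; since $(X,B)$ is log canonical, every $a_i \geq 0$. Define the candidate boundary
\[
\Gamma := \pi_*^{-1} B + \sum_{i=1}^r E_i,
\]
so that $(Y,\Gamma)$ is log smooth, hence $\qq$-factorial dlt. A direct computation with log discrepancies gives
\[
K_Y + \Gamma \;=\; \pi^*(K_X+B) + \sum_{i=1}^r a_i E_i,
\]
and the error term on the right is effective and $\pi$-exceptional.

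Next, I would run a $(K_Y+\Gamma)$-MMP over $X$ with scaling of a sufficiently $\pi$-ample divisor. Because $(Y,\Gamma)$ is dlt with big relative boundary, the existence of flips and termination of this MMP with scaling follow from the techniques of BCHM. By the negativity lemma applied to $\sum a_i E_i$, any such MMP must contract exactly the exceptional divisors with $a_i > 0$, while preserving those with $a_i = 0$ (the log canonical places). The output is a birational model $\pi' : Y' \to X$ on which $K_{Y'}+\Gamma'$ is nef over $X$, and in fact equals $\pi'^*(K_X+B)$ after the non-lc-place components have been removed. Hence every $\pi'$-exceptional divisor is a log canonical place of $(X,B)$.

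Finally, I would verify the required properties of $(Y',\Gamma')$. Divisorial contractions and flips preserve both dlt singularities and $\qq$-factoriality, so starting from the log smooth $(Y,\Gamma)$, the pair $(Y',\Gamma')$ remains $\qq$-factorial dlt and furnishes the desired dlt modification of $(X,B)$. The main technical subtlety in this plan is justifying termination of the MMP in the lc (rather than klt) setting: the standard way around this is to appeal to special termination for dlt pairs together with termination with scaling from BCHM, which is precisely where the hypotheses of a log smooth starting model and $\pi$-exceptionality of the discrepancy divisor pay off.
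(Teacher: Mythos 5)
Your argument is essentially the standard proof of this well-known result; note that the paper does not prove the theorem itself but simply cites \cite[Theorem 2.9]{FS20}, and your route (log resolution, boundary $\Gamma=\pi_*^{-1}B+\sum E_i$, relative MMP over $X$, negativity lemma to see that exactly the divisors with $a_i>0$ are contracted and that the final pair is crepant over $(X,B)$) is the same one found in the literature behind that citation. The computation $K_Y+\Gamma=\pi^*(K_X+B)+\sum a_iE_i$ is correct, and your observation that only divisors contained in $\supp\bigl(\sum a_iE_i\bigr)$ can be contracted (so strict transforms of $B$ and lc places survive, and $\qq$-factoriality and dlt-ness are preserved) is the right justification for the endgame. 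The one place where your citation should be sharpened is termination: BCHM's termination with scaling is stated for klt pairs, so it does not apply verbatim to the dlt pair $(Y,\Gamma)$; the correct tool is the MMP for effective divisors that are exceptional over $X$ (in the relative birational setting this is exactly the statement of \cite[Lemma 2.9]{Lai11}, which this paper itself invokes repeatedly, or Birkar's special LMMP for very exceptional divisors), whose proof is indeed special termination for dlt pairs combined with the results of \cite{BCHM10}, so the subtlety you flag is resolved precisely along the lines you indicate.
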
 

\begin{definition}
{\em 
A {\em sub-pair} is a couple $(X,B)$ where $X$ is a normal projective variety and $B$ is a divisor on $X$ for which $K_X+B$ is $\qq$-Cartier.
}
\end{definition} 

Definition~\ref{def:klt} of {\em log discrepancies}
for pairs, generalizes verbatim for sub-pairs.
Thus, we may talk about sub-log canonical singularities
and sub-klt singularities for sub-pairs $(X,B)$. 
When $B\geq 0$, this recovers the usual notions explained above.

\begin{definition}
{\em 
A sub-pair $(X,B)$ is said to be {\em sub-log Calabi--Yau} if $K_X+B\sim_\qq 0$ and $(X,B)$ has sub-log canonical singularities. 
If furthermore $B$ is an effective divisor, then we say that $(X,B)$ is a {\em log Calabi--Yau pair}.
}
\end{definition}

\subsection{Fano type varieties} In this subsection, we recall the definition of Fano type varieties that will be used heavily in the proof of the main theorems.
This is a weaker notion than log Fano (see Definition~\ref{def:log-Fano}), which is more suitable for proofs.
We also prove several lemmas about images and birational transformations of Fano type varieties.

\begin{definition}
{\em 
We say that a normal projective variety $X$ is of {\em Fano type} if there exists a boundary $B$ in $X$ such that $(X,B)$ is klt and $-(K_X+B)$ is nef and big.
}
\end{definition}

The definition of Fano type varieties is indeed equivalent to the definition of log Fano varieties. However, in practice, it is easier to show that a given variety is of Fano type.
This equivalence is well-known; we give a short proof. 

\begin{proposition}\label{prop:equiv-FT-logF}
A normal projective variety $X$ is of Fano type if and only if it is log Fano. 
\end{proposition}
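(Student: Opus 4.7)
The plan is standard: one direction is tautological, and the real content lies in upgrading ``nef and big'' to ``ample'' by a small perturbation. If $X$ is log Fano via $(X,B)$, then $-(K_X+B)$ is ample, hence in particular nef and big, so the same pair witnesses $X$ being of Fano type.

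For the converse, suppose $(X,B)$ is klt with $D := -(K_X+B)$ nef and big. The first step is to invoke Kodaira's lemma: since $D$ is big on the projective variety $X$, there exist an ample $\qq$-divisor $A$ and an effective $\qq$-divisor $E$ with
\[
D \;\sim_\qq\; A + E.
\]
For rational $t \in (0,1)$ define $B_t := B + tE$, which is again an effective $\qq$-divisor on $X$. The key computation is the identity
\[
-(K_X + B_t) \;=\; D - tE \;\sim_\qq\; (1-t)D + tA,
\]
obtained by writing $D - tE = (A+E) - tE = A + (1-t)E = (1-t)(A+E) + tA$. Since $D$ is nef and $A$ is ample, the right-hand side is the sum of a nef and an ample $\qq$-divisor, and hence is ample. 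Thus $-(K_X + B_t)$ is ample for every $t \in (0,1)$.

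The last step, and the only technical point, is to guarantee that $(X, B_t)$ remains klt for sufficiently small $t$. This is the openness of the klt condition: fix a log resolution $\pi\colon Y \to X$ of $(X, B+E)$, and for every prime divisor $F$ on $Y$ write $a_F(X, B_t) = a_F(X,B) - t \cdot \mathrm{coeff}_F(\pi^*E)$. Only finitely many such $F$ can have nonzero coefficient in $\pi^*(B+E)$, and for these the log discrepancy varies affinely in $t$ and is strictly positive at $t=0$ because $(X,B)$ is klt; for all other $F$ we have $a_F(X, B_t) = a_F(X,B) > 0$ unchanged. Hence there is a uniform $t_0 > 0$ such that $(X, B_t)$ is klt for all $t \in (0, t_0)$. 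Choosing any such $t$, the pair $(X, B_t)$ is klt with $-(K_X+B_t)$ ample, exhibiting $X$ as log Fano.

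There is no serious obstacle: the argument is an elementary combination of Kodaira's lemma with the fact that nef plus ample is ample, together with openness of klt on a fixed log resolution. The only place to be slightly careful is the klt check, since in principle one must control infinitely many discrepancies, but reducing to a log resolution of $(X, B+E)$ turns it into a finite statement.
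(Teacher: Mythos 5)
Your proof is correct and follows essentially the same route as the paper: the nontrivial direction is handled by a Kodaira-lemma decomposition $-(K_X+B)\sim_\qq A+E$ with $A$ ample and $E$ effective, perturbing the boundary by a small multiple of $E$ so that the anticanonical of the new pair becomes ample, and invoking openness of the klt condition (the paper cites the version of this decomposition in Lazarsfeld, Example 2.2.19, with $-(K_X+B)\sim_\qq A_k+E/k$ for all $k$, which is exactly your interpolation $(1-t)D+tA$). Your explicit verification of kltness on a log resolution just fills in a step the paper leaves implicit.
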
 

\begin{proof}
It is clear that a log Fano variety is of Fano type. 
Assume that $X$ is of Fano type. 
Then, we can find a boundary $\Delta$ in $X$ such that $(X,\Delta)$ is klt and $-(K_X+\Delta)$ is nef and big. 
By~\cite[Example 2.2.19]{Laz04a}, we can find an effective divisor $E$
such that for every $k\geq 1$, we can write 
\[
-(K_X+\Delta) \sim_\qq A_k + \frac{E}{k},
\]
where $A_k$ is an ample $\qq$-divisor.
By construction, the divisor $E$ is $\qq$-Cartier. 
For $k$ small enough, the pair $(X,\Delta+E/k)$ is klt
and $-(K_X+\Delta+E/k)$ is ample.
Thus, it suffices to take $B:=A_k+\frac{E}{k}$. 
\end{proof} 

The following lemma follows from the definition.
It asserts that Fano type varieties admit some interesting log Calabi--Yau structure.

\begin{lemma}\label{lem:FT-klt-CY}
A normal projective variety $X$ is of {\em Fano type}
if and only if there exists a big boundary divisor $B$ on $X$ such that $(X,B)$ is klt and log Calabi--Yau.
\end{lemma}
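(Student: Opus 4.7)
The plan is to prove the two directions separately, in each case producing the desired boundary by a small perturbation argument based on Kodaira's lemma together with the openness of the klt condition.

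For the ``only if'' direction, assume $X$ is of Fano type and choose a boundary $\Delta$ with $(X,\Delta)$ klt and $-(K_X+\Delta)$ nef and big. By Kodaira's lemma, write $-(K_X+\Delta)\sim_\qq A+E$ with $A$ ample and $E\ge 0$ effective, and for each $k\ge 1$ set
\[
A_k:=\tfrac{k-1}{k}\bigl(-(K_X+\Delta)\bigr)+\tfrac{1}{k}A,
\]
which is an ample $\qq$-divisor satisfying $-(K_X+\Delta)\sim_\qq A_k+\tfrac{1}{k}E$. I will take a general effective $\qq$-divisor $A_k'\sim_\qq A_k$ (which exists by Bertini applied to a sufficiently divisible multiple) and set
\[
B:=\Delta+\tfrac{1}{k}E+A_k'.
\]
Then $K_X+B\sim_\qq 0$ by construction, and $B$ is big because it contains the ample summand $A_k'$. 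The only step requiring care is showing $(X,B)$ is klt: for $k$ large the pair $(X,\Delta+\tfrac{1}{k}E)$ is klt by the standard openness of klt in the boundary, and a general member $A_k'$ of a base-point-free linear system adds no new singularities, so klt is preserved.

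For the ``if'' direction, assume $B$ is big with $(X,B)$ klt and $K_X+B\sim_\qq 0$. Again by Kodaira's lemma, write $B\sim_\qq A+E$ with $A$ ample and $E\ge 0$ effective, and for small $\epsilon>0$ put
\[
\Delta_\epsilon:=(1-\epsilon)B+\epsilon E.
\]
Using $K_X+B\sim_\qq 0$ and $B\sim_\qq A+E$, a direct computation gives
\[
-(K_X+\Delta_\epsilon)\sim_\qq \epsilon B-\epsilon E\sim_\qq \epsilon A,
\]
which is ample, hence in particular nef and big. For $(X,\Delta_\epsilon)$ to be klt, I rewrite $\Delta_\epsilon=B+\epsilon(E-B)$ and use that on a fixed log resolution of $(X,B+E)$ the log discrepancy of any prime divisor is a linear function of $\epsilon$ starting from a strictly positive value at $\epsilon=0$; since all relevant coefficients are bounded on such a resolution, klt persists for $\epsilon$ sufficiently small.

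The main obstacle in both directions is verifying klt preservation under the relevant perturbation; both instances reduce to the same openness property of klt singularities with respect to variation of the boundary by a fixed $\qq$-Cartier divisor, combined with a Bertini-type argument in the forward direction.
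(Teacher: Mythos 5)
Your proof is correct, and it matches the argument the paper leaves implicit: the paper states Lemma~\ref{lem:FT-klt-CY} without proof ("follows from the definition"), and both directions of your argument are the standard Kodaira-lemma perturbation, the forward one being essentially the same decomposition used in the paper's proof of Proposition~\ref{prop:equiv-FT-logF} followed by adding a general member of the ample $\qq$-linear system. The only point worth making explicit is that $E$ (resp.\ $B-E$) is $\qq$-Cartier because it is $\qq$-linearly equivalent to a difference of $\qq$-Cartier divisors, which is what legitimizes the klt-openness computations in both directions.
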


The following lemma was proved in~\cite[Corollary 1.3.1]{BCHM10}.

\begin{lemma}\label{lem:FT-are-MDS}
Fano type varieties are Mori dream spaces.
\end{lemma}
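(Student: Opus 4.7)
The plan is to reduce finite generation of $\Cox(X)$ for a Fano type variety $X$ to finite generation of log canonical rings of klt pairs, which is the main theorem of~\cite{BCHM10}. By the characterization of Hu--Keel~\cite{HK00}, a normal $\qq$-factorial projective variety with finitely generated class group is a Mori dream space precisely when its Cox ring is a finitely generated $\kk$-algebra; so it suffices to establish finite generation of $\Cox(X)$.

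First, using Lemma~\ref{lem:FT-klt-CY} (or Proposition~\ref{prop:equiv-FT-logF}), fix a boundary $B_0$ on $X$ such that $(X,B_0)$ is klt and $-(K_X+B_0)$ is ample. In particular $X$ has rational singularities and $\Cl(X)_\qq \simeq \Pic(X)_\qq$ is finitely generated; moreover, $X$ is $\qq$-factorial (one can replace $X$ by a small $\qq$-factorialization, which is still Fano type, if needed for the definition of the Cox ring). Choose finitely many Weil divisors $D_1,\dots,D_r$ whose classes generate $\Cl(X)$. To show $\Cox(X)$ is finitely generated, it is enough to show that the multigraded section ring $\bigoplus_{(m_1,\dots,m_r)\in \zz^r_{\geq 0}} H^0(X,\mathcal{O}_X(m_1D_1+\cdots+m_rD_r))$ is finitely generated.

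The key step is the following perturbation trick. Since $-(K_X+B_0)$ is ample, for any $\qq$-divisor $D$ on $X$ one can choose $0<\epsilon\ll 1$ and an effective $\qq$-divisor $\Delta_D\sim_\qq -(K_X+B_0)-\epsilon D$ such that $(X,B_0+\epsilon\Delta_D+\epsilon D')$ is still klt for any $D'$ linearly equivalent to $D$ (using Bertini and that klt is an open condition). Rearranging, $D\sim_\qq -(K_X+B_D)$ for a klt boundary $B_D$, so the section ring $R(X,D)$ of $D$ coincides, up to rescaling, with the log canonical ring of the klt pair $(X,B_D)$. By~\cite[Theorem 1.2]{BCHM10}, this log canonical ring is finitely generated. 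The same argument, applied to effective positive rational combinations in the cone spanned by $D_1,\dots,D_r$, yields finite generation of the full Cox ring via a standard Gordan-type argument on the effective cone (which is rational polyhedral for Fano type varieties by~\cite{BCHM10}): one chooses finitely many generators of the monoid of effective classes and applies finite generation pair-by-pair.

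The main obstacle is the bookkeeping needed to go from single-graded finite generation (one divisor at a time) to multigraded finite generation of $\Cox(X)$. This is handled by the polyhedrality of the effective and movable cones for Fano type varieties, itself a consequence of~\cite{BCHM10}: the cones are rational polyhedral and admit a finite chamber decomposition into Mori chambers, with each chamber corresponding to a small $\qq$-factorial modification of $X$ on which the chosen divisor becomes semiample. Combining this structural input with the finite generation of each log canonical ring gives finite generation of $\Cox(X)$, and hence $X$ is a Mori dream space.
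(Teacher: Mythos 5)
There is a genuine gap in your key perturbation step: the sign is wrong, and it breaks the reduction to \cite{BCHM10}. You choose $\Delta_D\sim_\qq -(K_X+B_0)-\epsilon D$, which gives $\epsilon D\sim_\qq -(K_X+B_D)$ with $B_D=B_0+\Delta_D$; hence $R(X,D)$ is, up to a Veronese, the \emph{anti}-log-canonical ring $\bigoplus_m H^0(X,-m(K_X+B_D))$ of a klt pair, not its log canonical ring. BCHM's finite generation theorem concerns rings of the form $\bigoplus_m H^0(X,m(K_X+\Delta))$; finite generation of anti-adjoint rings of arbitrary klt pairs is false in general, and on Fano type varieties it is precisely the statement you are trying to prove, so as written the step is either unjustified or circular. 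The fix is to flip the sign: take $\Delta_D\sim_\qq \epsilon D-(K_X+B_0)$, which is ample for $0<\epsilon\ll 1$, and choose it general and effective so that $(X,B_0+\Delta_D)$ is klt; then $\epsilon D\sim_\qq K_X+B_0+\Delta_D$ is the log canonical divisor of a klt pair with big boundary, and \cite{BCHM10} applies to $R(X,\epsilon D)$, hence to $R(X,D)$.

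A second, related concern is the passage from single-graded to multigraded finite generation. Invoking ``the Mori chamber decomposition, with each chamber corresponding to a small $\qq$-factorial modification on which the divisor becomes semiample'' is dangerous: by \cite{HK00} this structure is essentially equivalent to $X$ being a Mori dream space, so quoting it as an input makes the argument circular. What \cite{BCHM10} actually supplies, and what the standard deduction uses, is the \emph{multigraded} finite generation of adjoint rings (Corollary 1.1.9 there, or finiteness of ample models): after passing to a small $\qq$-factorialization and writing generators $D_1,\dots,D_r$ of ${\rm Cl}(X)$ in the adjoint form $\epsilon_i D_i\sim_\qq K_X+\Delta_i$ with $(X,\Delta_i)$ klt and $\Delta_i$ big as above, that corollary gives finite generation of $\bigoplus_{(m_1,\dots,m_r)\in\zz^r_{\geq 0}}H^0\bigl(X,\mathcal{O}_X(m_1D_1+\cdots+m_rD_r)\bigr)$ directly, and the Hu--Keel criterion finishes the proof. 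Note also that the paper does not re-derive this lemma at all: it simply cites \cite{BCHM10} (the Mori dream space statement there), which already contains the multigraded statement you are reconstructing.
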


The two following lemmas explain how the Fano type condition behaves under birational contractions
and higher birational models (see, e.g.,~\cite[Lemma 2.17]{Mor24a}).

\begin{lemma}\label{lem:FT-bir-cont}
Let $X$ be a Fano type variety. 
Let $X\dashrightarrow Y$ be a birational contraction.
Then $Y$ is a Fano type variety.
\end{lemma}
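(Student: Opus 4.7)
The plan is to exploit the log Calabi--Yau characterization of Fano type varieties provided by Lemma~\ref{lem:FT-klt-CY} and to transport this structure along the birational contraction. First, by Lemma~\ref{lem:FT-klt-CY}, I would pick a big boundary $B$ on $X$ such that $(X,B)$ is klt and $K_X+B\sim_\qq 0$. Set $B_Y:=\phi_* B$, the strict transform on $Y$. Take a common resolution $p\colon W\to X$ and $q\colon W\to Y$ of $\phi$. Since $\phi$ is a birational contraction, the inverse $\phi^{-1}$ extracts no divisors; this forces every $p$-exceptional prime divisor on $W$ to be $q$-exceptional as well, a key observation that will be used repeatedly.

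Next, write $K_W+B_W=p^*(K_X+B)$, so that $(W,B_W)$ is sub-klt and $K_W+B_W\sim_\qq 0$. The coefficients of $B_W$ differ from those of $p_*^{-1}B$ only on $p$-exceptional divisors, which by the above are $q$-exceptional. Therefore $q_*B_W=B_Y$, and pushing forward the $\qq$-linear equivalence yields $K_Y+B_Y\sim_\qq 0$; in particular $K_Y+B_Y$ is $\qq$-Cartier, so $(Y,B_Y)$ is a pair. To establish that $(Y,B_Y)$ is klt, pull back to $W$: $q^*(K_Y+B_Y)=K_W+B_W'$ for some $\qq$-divisor $B_W'$. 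On non-$q$-exceptional components, $B_W$ and $B_W'$ agree (both recover $B_Y$ under $q_*$), so $B_W'-B_W$ is $q$-exceptional. Since both are $\qq$-linearly equivalent to $-K_W$, the difference is also $\qq$-linearly trivial, and the negativity lemma forces $B_W'=B_W$. Sub-klt-ness of $(W,B_W)$ together with effectiveness of $B_Y$ then gives $(Y,B_Y)$ klt.

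It remains to verify that $B_Y$ is big on $Y$, after which Lemma~\ref{lem:FT-klt-CY} applied in the reverse direction concludes that $Y$ is of Fano type. Since $B$ is big, $h^0(X,mB)\sim m^n$ for large $m$. For any $s\in H^0(X,mB)$, viewed as an element of $K(X)^\times=K(Y)^\times$, the divisor $\mathrm{div}_X(s)+mB\geq 0$ pushes forward under $\phi$ to $\mathrm{div}_Y(s)+mB_Y\geq 0$; the absence of $\phi^{-1}$-exceptional divisors on $Y$ guarantees that $\phi_*\mathrm{div}_X(s)=\mathrm{div}_Y(s)$. This produces an injection $H^0(X,mB)\hookrightarrow H^0(Y,mB_Y)$, so $h^0(Y,mB_Y)\gtrsim m^n$, hence $B_Y$ is big.

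The main obstacle is ensuring all the exceptional bookkeeping on $W$ works out, particularly the matching of coefficients on the non-exceptional locus and the application of the negativity lemma in Step 4; the bigness step, while geometrically intuitive, also requires the specific property of birational contractions that $\phi^{-1}$ extracts no divisors, without which $B_Y$ might fail to be big. Everything else is a direct consequence of the log Calabi--Yau characterization from Lemma~\ref{lem:FT-klt-CY}.
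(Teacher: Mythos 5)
Your proof is correct. The paper does not actually prove this lemma --- it only cites \cite[Lemma 2.17]{Mor24a} --- and your argument (take a big klt log Calabi--Yau structure via Lemma~\ref{lem:FT-klt-CY}, compare the two crepant pullbacks on a common resolution using that every $p$-exceptional divisor is $q$-exceptional for a birational contraction, conclude $B_W'=B_W$ by the negativity lemma, and push sections forward to preserve bigness) is precisely the standard proof given in such references. The only point to tidy is the rounding in the bigness step: work with $\lfloor mB\rfloor$ and $\lfloor mB_Y\rfloor$, or take $m$ sufficiently divisible, when counting sections.
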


\begin{lemma}\label{lem:FT-higher-model}
Let $X$ be a Fano type variety.
Let $(X,B)$ be a log Calabi--Yau pair.
Let $\pi\colon Y\rightarrow X$ be a projective birational morphism only extracting log canonical places of $(X,B)$.
Then, the variety $Y$ is of Fano type.
\end{lemma}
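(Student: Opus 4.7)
The plan is to transfer the Fano type structure from $X$ to $Y$ by perturbing the log Calabi--Yau boundary $B$ with a klt Fano boundary on $X,$ and then pulling back via $\pi.$ By Proposition~\ref{prop:equiv-FT-logF}, since $X$ is of Fano type, we may pick a boundary $\Delta$ on $X$ such that $(X,\Delta)$ is klt and $-(K_X+\Delta)$ is ample. Write $H:=-(K_X+\Delta).$ For a rational number $\epsilon\in(0,1)$ to be determined, set
\[
B_\epsilon:=(1-\epsilon)B+\epsilon\Delta,
\]
so that $K_X+B_\epsilon\sim_\qq\epsilon(K_X+\Delta)\sim_\qq -\epsilon H.$

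For the first step I would check that $(X,B_\epsilon)$ is klt for all $\epsilon$ sufficiently small. This follows from the identity
\[
a_E(X,B_\epsilon)=(1-\epsilon)\,a_E(X,B)+\epsilon\,a_E(X,\Delta),
\]
combined with $a_E(X,B)\geq 0$ (since $(X,B)$ is lc) and $a_E(X,\Delta)>0$ (since $(X,\Delta)$ is klt); one also verifies that no coefficient of $B_\epsilon$ exceeds one, using that any coefficient one in $B$ is strictly diluted by $\epsilon\Delta.$ Hence $(X,B_\epsilon)$ is klt and $-(K_X+B_\epsilon)=\epsilon H$ is ample, so $(X,B_\epsilon)$ is klt Fano.

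Next I would pull back along $\pi$ by writing $\pi^*(K_X+B_\epsilon)=K_Y+B_{Y,\epsilon},$ where $B_{Y,\epsilon}$ is the log pullback. The key calculation is that for every $\pi$-exceptional prime divisor $E,$ the hypothesis that $\pi$ extracts only log canonical places of $(X,B)$ gives $a_E(X,B)=0,$ and therefore
\[
\operatorname{coeff}_E(B_{Y,\epsilon})=1-a_E(X,B_\epsilon)=1-\epsilon\, a_E(X,\Delta).
\]
Since there are only finitely many $\pi$-exceptional divisors (as $\pi$ is projective and birational), I can choose a single $\epsilon>0$ small enough that $\operatorname{coeff}_E(B_{Y,\epsilon})\in[0,1)$ for every exceptional $E.$ For non-exceptional prime divisors, the coefficients of $B_{Y,\epsilon}$ coincide with those of $B_\epsilon$ (on strict transforms) and hence lie in $[0,1),$ since $(X,B_\epsilon)$ is klt. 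Thus $B_{Y,\epsilon}\geq 0$ is a genuine boundary.

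Finally, I would conclude: the sub-pair $(Y,B_{Y,\epsilon})$ is crepant birational to the klt pair $(X,B_\epsilon),$ so its log discrepancies are strictly positive, and combined with the effectivity of $B_{Y,\epsilon}$ this yields that $(Y,B_{Y,\epsilon})$ is a klt pair; moreover
\[
-(K_Y+B_{Y,\epsilon})=\epsilon\,\pi^*H
\]
is the pullback of an ample divisor by a birational morphism, hence nef and big. So $Y$ is Fano type. The main obstacle is ensuring $B_{Y,\epsilon}\geq 0$; this is resolved by the finiteness of the exceptional locus of a projective birational morphism, which allows a uniform choice of $\epsilon.$
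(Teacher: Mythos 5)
Your argument is correct: the convex perturbation $B_\epsilon=(1-\epsilon)B+\epsilon\Delta$ is klt with $-(K_X+B_\epsilon)\sim_\qq \epsilon H$ ample, its log pullback to $Y$ is effective for $\epsilon$ small precisely because every $\pi$-exceptional divisor is a log canonical place of $(X,B)$, and the resulting klt boundary on $Y$ has nef and big anti-log-canonical class, which is the paper's definition of Fano type. The paper does not prove this lemma itself but cites \cite[Lemma 2.17]{Mor24a}; your perturbation-and-pullback argument is exactly the standard proof underlying that reference, so there is nothing to correct.
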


\subsection{Cluster type varieties} 

In this subsection, we recall some lemmas regarding cluster type varieties.

\begin{lemma}\label{lem:sing-of-cluster-type}
Let $U$ be a quasi-projective normal variety. 
Assume that there is an embedding in codimension one 
$\mathbb{G}_m^n \dashrightarrow U$.
Let $\Omega$ be the volume form of the algebraic torus $\mathbb{G}_m^n$.
Assume that $\Omega$ has no zeros on $U$
and let $B_U$ be the divisor on $U$ induced
by the poles of $U$.
Then, the pair $(U,B_U)$ has log canonical singularities. 
\end{lemma}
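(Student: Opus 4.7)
The plan is to establish log canonicity directly on a suitable log resolution, making essential use of the observation that a logarithmic $n$-form of the shape $\bigwedge_i dx_i/x_i$ can have at most simple poles along any SNC divisor.

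Since $\Omega$ is regular and nowhere vanishing on the dense open torus $\G_m^n \subset U$, its divisor on $U$ is supported in $U \setminus \G_m^n$, and the no-zeros hypothesis yields $\operatorname{div}_U(\Omega) = -B_U$, so that $K_U + B_U \sim 0$ is in particular $\qq$-Cartier. By Hironaka, I may take a projective birational morphism $\pi \colon Y \to U$ from a smooth variety that is an isomorphism over $\G_m^n$ and for which the reduced preimage of $U \setminus \G_m^n$ is an SNC divisor on $Y$; this is automatically a log resolution of $(U,B_U)$. Writing $K_Y + B_Y = \pi^*(K_U + B_U)$, one has $B_Y = -\operatorname{div}_Y(\pi^*\Omega)$, and log canonicity of $(U,B_U)$ is equivalent to the inequality $v_E(\pi^*\Omega) \geq -1$ for every prime divisor $E$ of $Y$.

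If $E$ meets $\G_m^n$, the inequality is automatic, since $\Omega$ is regular on the torus. The key case is $E \subset Y \setminus \G_m^n$, for which I argue étale-locally around a general point $p \in E$. Pick coordinates $(y_1, \ldots, y_n)$ in which the SNC components of $Y \setminus \G_m^n$ through $p$ are $\{y_1 = 0\}, \ldots, \{y_k = 0\}$, so that the torus is locally $\{y_1 \cdots y_k \neq 0\}$. Because each torus coordinate $x_i$ is a unit on this local torus chart, it admits a monomial factorisation $x_i = u_i \prod_{j=1}^k y_j^{m_{ij}}$ in the completed local ring, with $u_i$ a unit and $m_{ij} \in \zz$. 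Substituting into
\[
\pi^*\Omega = \bigwedge_{i=1}^n \frac{dx_i}{x_i} = \bigwedge_{i=1}^n \left( \sum_{j=1}^k m_{ij} \frac{dy_j}{y_j} + \frac{du_i}{u_i} \right)
\]
and using $\tfrac{dy_j}{y_j} \wedge \tfrac{dy_j}{y_j} = 0$, every non-vanishing term in the expansion contributes at most one factor of $\tfrac{dy_j}{y_j}$ for each fixed $j$. Consequently $\pi^*\Omega$ has at most a simple pole along $\{y_1 = 0\} = E$, i.e., $v_E(\pi^*\Omega) \geq -1$.

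The main subtlety to nail down is the monomial factorisation of the $x_i$'s in the étale-local coordinates, which is really just the description of units in the localisation $\kk[[y_1, \ldots, y_n]][(y_1 \cdots y_k)^{-1}]$ at a formal SNC chart, together with the fact that $x_i$ is a unit on the local torus. Once this is secured, the rest is a direct $d\log$ wedge calculation, with no further birational input needed.
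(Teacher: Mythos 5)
Your proof is correct, and it takes a genuinely different route from the paper. The paper's argument never works locally on a resolution of $U$ itself: it chooses a projective closure $U\hookrightarrow X$, takes a common resolution $Y$ of the birational map $X\dashrightarrow \pp^n$, crepantly pulls back the toric log Calabi--Yau pair $(\pp^n, H_0+\dots+H_n)$ (which is known to be log canonical) to get a sub-log Calabi--Yau structure on $Y$, pushes it forward to $X$ and invokes the negativity lemma to see that $(X,B_X)$ is sub-lc, and finally observes that the negative coefficients of $B_X$ only occur along divisors outside $U$, so that the restriction to $U$ is the honest lc pair $(U,B_U)$. You instead check log canonicity directly on one log resolution $\pi\colon Y\to U$ by showing that $\pi^*\Omega$ is a logarithmic form along the SNC divisor: since each torus coordinate $x_i$ is a unit off the SNC locus, it factors in the (regular, hence UFD) local ring as a unit times a monomial in the local equations $y_j$, and the expansion of $\bigwedge_i d\log x_i$ then has at most a simple pole along each component, i.e.\ $a_E(U,B_U)\geq 0$ on that resolution, which suffices by the standard fact that lc-ness can be tested on a single log resolution. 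Your route is more elementary and self-contained (it essentially reproves, via the description of units in a localization of a UFD, the classical fact that wedges of $d\log$'s of units lie in $\Omega^n_Y(\log D)$), and it avoids both the compactification and the negativity lemma; the paper's route is shorter given the machinery already in play (crepant pullback, sub-pairs, negativity), and fits the crepant-birational framework used throughout the rest of the paper. Two small phrasing points to tighten: since $\mathbb{G}_m^n\dashrightarrow U$ is only an embedding in codimension one, ``isomorphism over $\mathbb{G}_m^n$'' should be ``isomorphism over the open subset $j(V)\subset U$ on which the torus is embedded,'' and one should arrange (e.g.\ by first blowing up the ideal of $U\setminus j(V)$) that the reduced preimage of $U\setminus j(V)$ is purely divisorial before asking it to be SNC; neither affects the substance of the argument. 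Also note the completion is not needed: the monomial factorization already holds in $\mathcal{O}_{Y,p}$.
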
 

\begin{proof}
Let $U\hookrightarrow X$ be any embedding of $U$
into a normal projective variety $X$. 
Let $\phi\colon X \dashrightarrow \pp^n$ be a birational map which is an isomorphism over a big open subset of $\mathbb{G}_m^n$. 
Let $p\colon Y\rightarrow X$ and $q\colon Y\rightarrow \pp^n$ be two projective birational morphisms that give a resolution of indeterminacy of $\phi$.
Write $q^*(K_{\pp^n}+\Sigma^n)=K_Y+B_Y$.
Then, by construction, the couple $(Y,B_Y)$ is a sub-log Calabi--Yau pair.
Let $B_X:=p_*B_Y$. Then, by the negativity lemma, the couple $(X,B_X)$ is a sub-log Calabi-Yau pair. In particular, $(X,B_X)$ has sub-log canonical singularities.
Note that all the negative coefficients of $B_X$ happen along prime components of $X\setminus U$. Indeed, the restriction of $B_X$ to $U$ equals $B_U$.
Hence, we conclude that the pair $(U,B_U)$ is log canonical. 
\end{proof} 

\begin{lemma}\label{lem:cluster-type-via-tori}
Let $(X,B)$ be a log Calabi--Yau pair of dimension $n$.
If there exists an embedding in codimension one 
$\mathbb{G}_m^n\dashrightarrow X\setminus B$, then 
the pair $(X,B)$ is of cluster type.
\end{lemma}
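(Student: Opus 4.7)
The plan is to verify directly the two defining conditions of a cluster type pair (Definition~\ref{def:ct}): that $\mathbb{G}_m^n$ embeds into $X$ in codimension one, and that the torus volume form $\Omega$ has no zeros on $X$, with pole divisor equal to $B$. The first condition is immediate, since the given codimension one embedding $\mathbb{G}_m^n\dashrightarrow X\setminus B$ composes with the open immersion $X\setminus B\hookrightarrow X$ to yield the required embedding.

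Let $U\subseteq X\setminus B$ denote the torus image. The codimension one assumption forces $(X\setminus B)\setminus U$ to have codimension at least two in $X$, so the codimension one components of $X\setminus U$ are precisely the prime components $B_1,\ldots,B_r$ of $B$. Since $\Omega$ is regular and nowhere vanishing on $U$, its extension to $X$ has $\operatorname{div}_X(\Omega)=\sum_i d_i B_i$ with $d_i\in\zz$, and the two properties to be established amount to the integer identity $d_i=-b_i$ for all $i$, where $b_i=\operatorname{coeff}_{B_i}(B)$. Using that $\operatorname{div}_X(\Omega)$ represents $K_X$ and the log Calabi--Yau relation $K_X+B\sim_\qq 0$, one obtains $\sum_i(d_i+b_i)B_i\sim_\qq 0$ as a $\qq$-trivial Weil divisor supported on $B$.

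To promote this $\qq$-triviality to a literal identity, I would compare $(X,B)$ with the toric log Calabi--Yau pair $(\pp^n,\Sigma^n)$, for which $K_{\pp^n}+\Sigma^n=0$ and $\operatorname{div}_{\pp^n}(\Omega)=-\Sigma^n$ hold as Weil divisors. The torus embedding yields a birational map $\phi\colon\pp^n\dashrightarrow X$; on a common log resolution $Y$ with projections $p\colon Y\to\pp^n$ and $q\colon Y\to X$, normalizing $K_Y$ by $\operatorname{div}_Y(\Omega)$ produces $K_Y+B_Y^{\pp}=0$ and $K_Y+B_Y^X\sim_\qq 0$, so that $B_Y^X-B_Y^{\pp}$ is a $\qq$-trivial Weil divisor on $Y$ supported outside the common torus preimage $U_Y$.

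The main obstacle is to show this difference vanishes identically, which gives the crepant birational equivalence $(\pp^n,\Sigma^n)\sim(X,B)$ and hence $\operatorname{div}_X(\Omega)=-B$. A rational function on $Y$ whose divisor is supported outside $U_Y$ restricts to a unit on $\mathbb{G}_m^n$, hence is of the form $c\, x^{\vec a}$ for some $\vec a\in M$; pushing forward to $X$ gives $\sum_i(d_i+b_i)B_i=\tfrac{1}{m}\operatorname{div}_X(x^{\vec a})$ for some $m>0$. The projectivity of $X$ implies that the rays $u_i=v_{B_i}|_M\in N$ span $N_\qq$: otherwise a nonconstant character $x^{\vec a}$ with $\langle u_i,\vec a\rangle=0$ for all $i$ would have empty divisor on $X$, hence be a globally regular unit, contradicting projectivity. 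Combining this spanning with the effectivity of $B$, the log canonicity bounds $b_i\in(0,1]$, and a negativity argument applied to the $\qq$-trivial sub-boundary difference on $Y$ forces $\vec a=0$, completing the verification that $\operatorname{div}_X(\Omega)=-B$ and hence that $(X,B)$ is of cluster type.
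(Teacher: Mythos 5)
Your reduction of the lemma to the identity $\operatorname{div}_X(\Omega)=-B$, the observation that $m(K_X+B)=\operatorname{div}_X(f)$ with $f$ a unit on the dense torus and hence $f=c\,x^{\vec a}$, and the spanning claim for the $u_i$ are all fine. The problem is the last step, which is the actual content of the lemma: you never carry out the argument that forces $\vec a=0$, and the ingredients you list would not suffice as stated. Linear spanning of the valuations $u_i=\operatorname{ord}_{B_i}|_M$, effectivity of $B$, and the bounds $b_i\in(0,1]$ only give the relations $d_i+b_i=\tfrac{1}{m}\langle u_i,\vec a\rangle$ with $d_i\in\zz$, $b_i\in(0,1]$; these numerical constraints are compatible with $\vec a\neq 0$ (e.g.\ $\langle u_1,\vec a/m\rangle=\tfrac12$, $\langle u_2,\vec a/m\rangle=-\tfrac12$ with $d_1=0,b_1=\tfrac12$, $d_2=-1,b_2=\tfrac12$), so no contradiction can be extracted from them alone. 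Likewise, "a negativity argument applied to the $\qq$-trivial sub-boundary difference on $Y$" does not do anything by itself: that difference is the principal divisor $\tfrac1m\operatorname{div}_Y(x^{\vec a})$, and the negativity lemma only transfers effectivity you already know; the one-sided coefficient bounds you need are not among the facts you invoke.

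The missing input is log canonicity of $(X,B)$ evaluated at the \emph{toric} divisorial valuations, not at the components $B_i$. Concretely: from $B_Y^X-B_Y^{\pp}=\tfrac1m\operatorname{div}_Y(x^{\vec a})=\tfrac1m p^*\operatorname{div}_{\pp^n}(x^{\vec a})$, if $\vec a\neq 0$ then, because the primitive ray generators of the (complete) fan of $\pp^n$ \emph{positively} span $N_\qq$, some coordinate hyperplane $H_j$ has $\langle v_j,\vec a\rangle>0$; since $\operatorname{coeff}_{H_j}(B_Y^{\pp})=1$, its strict transform would have coefficient $>1$ in $B_Y^X$, i.e.\ $a_{H_j}(X,B)<0$, contradicting that $(X,B)$ is log canonical. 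Note that this uses positive spanning of the toric rays and sub-lc of the crepant pullback at divisors that are typically exceptional over $X$ — neither is captured by "$b_i\in(0,1]$" or by spanning of the $u_i$. Alternatively, you can follow the paper's shorter route: push the crepant structure down to $\pp^n$, observe that $B_{\pp^n}$ is supported on the $n+1$ hyperplanes with coefficients $\leq 1$ (sub-lc) while $K_{\pp^n}+B_{\pp^n}\sim_\qq 0$ forces total degree $n+1$, so all coefficients equal $1$ and $(X,B)$ is crepant to $(\pp^n,H_0+\dots+H_n)$, giving $\operatorname{div}_X(\Omega)=-B$ directly.
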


\begin{proof}
Let $\mathbb{G}_m^n \subset \pp^n$ be a compactification of the algebraic torus.
Let $\psi\colon \pp^n \dashrightarrow X$ be the induced birational map.
Let $(\pp^n,B_{\pp^n})$ be the sub-log Calabi--Yau obtained by pull-back via $\psi$ to $\pp^n$ of the Calabi--Yau pair $(X,B)$.
As $\mathbb{G}_m^n\dashrightarrow X\setminus B$ is a codimension one embedding, we conclude that $B_{\pp^n}$ is supported on the torus invariant hyperplanes of $\pp^n$. As $K_{\pp^n}+B_{\pp^n}\sim 0$ and the coefficients of $B_{\pp^n}$ are at most one, we conclude that $B_{\pp^n}=H_0+\dots+H_n$, where the $H_i$'s are the hyperplane coordinates. 
We conclude that $(\pp^n,H_0+\dots+H_n)$ is crepant birational equivalent to $(X,B)$.
In particular, the divisor $B$ gives the zeros and poles of the volume form $\Omega_{\mathbb{G}_m^n}$ of the algebraic torus $\mathbb{G}_m^n$. 
Thus, the volume form $\Omega_{\mathbb{G}_m^n}$ has no zeros on $X$, and so $X$ is a cluster type variety and $(X,B)$ is a cluster type pair.
\end{proof} 

\begin{lemma}\label{lem:torus-in-toric}
Let $T$ be a toric variety of dimension $n$.
Let $P\subset T$ be a torus invariant prime divisor.
Then, there exists an embedding $\mathbb{G}_m^n \hookrightarrow T$ that contains the generic point of $P$ and such that 
$T\setminus j(\mathbb{G}_m^n)$ contains all prime torus invariant divisors of $T$ except for $P$.  
\end{lemma}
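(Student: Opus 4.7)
The plan is to realize the desired $\mathbb{G}_m^n$ as a distinguished open subset of the affine toric chart attached to $P$. Let $\rho \subset N_\rr$ be the ray corresponding to $P$, with primitive generator $u \in N$, and consider the affine toric open $U_\rho = \Spec \kk[\rho^\vee \cap M] \subset T$. By the orbit-cone correspondence, $U_\rho$ is the disjoint union of the open torus orbit and the codimension-one orbit whose closure is $P$; in particular, $U_\rho$ contains the generic point of $P$ and meets no other torus-invariant prime divisor of $T$. Consequently, any open subset of $U_\rho$ will automatically satisfy the disjointness condition demanded by the lemma.

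To make the chart explicit, I extend $u$ to a $\mathbb{Z}$-basis $u_1 = u, u_2, \ldots, u_n$ of $N$, which is possible because $u$ is primitive. Writing $m_1, \ldots, m_n$ for the dual basis of $M$ and $x_i = \chi^{m_i}$, I obtain
\[
U_\rho \cong \Spec \kk[x_1, x_2^{\pm 1}, \ldots, x_n^{\pm 1}] \cong \mathbb{A}^1 \times \mathbb{G}_m^{n-1},
\]
with $P \cap U_\rho$ cut out by $\{x_1 = 0\}$. I then set $V := U_\rho \setminus \{x_1 = 1\}$. The change of variable $y = x_1 - 1$ induces an isomorphism
\[
\kk[x_1, (x_1-1)^{-1}, x_2^{\pm 1}, \ldots, x_n^{\pm 1}] \xrightarrow{\sim} \kk[y^{\pm 1}, x_2^{\pm 1}, \ldots, x_n^{\pm 1}],
\]
and hence $V \cong \mathbb{G}_m^n$.

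Since $\{x_1 = 0\}$ and $\{x_1 = 1\}$ are disjoint in $U_\rho$, the open subvariety $V$ still contains $P \cap U_\rho$, and in particular the generic point of $P$. Combined with the disjointness from the other torus-invariant prime divisors observed in the first paragraph, the composition $j \colon \mathbb{G}_m^n \xrightarrow{\sim} V \hookrightarrow T$ is the embedding required by the lemma. The only substantive ingredient beyond the orbit-cone dictionary is the elementary identification $\kk[x_1, (x_1-1)^{-1}] \cong \kk[y^{\pm 1}]$ via the affine shift $y = x_1 - 1$, so there is no genuine obstacle in the argument.
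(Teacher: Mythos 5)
Your proof is correct and follows essentially the same route as the paper: the paper removes all invariant divisors other than $P$ to get $\mathbb{A}^1\times\mathbb{G}_m^{n-1}$ (which is exactly your chart $U_\rho$, since the only cones containing no ray other than $\rho$ are $\{0\}$ and $\rho$) and then deletes the locus $\{x_1=1\}$, just as you do. Your version merely makes the coordinates and the identification $T\setminus Q_T = U_\rho$ more explicit.
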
 

\begin{proof}
Let $Q_T$ be the sum of all the prime torus invariant divisors of $T$ distinct from $P$. 
Then, we have that $T\setminus Q_T \simeq \mathbb{A}^1 \times \mathbb{G}_m^{n-1}$ (see, e.g.,~\cite[Theorem 3.2.6]{CLS11}). 
Further, the divisor $P$ is identified with 
$\{0\} \times \mathbb{G}_m^{n-1}$ under the previous isomorphism. 
Consider the open set $U:=(\mathbb{A}^1-\{1\})\times \mathbb{G}_m^{n-1} \simeq \mathbb{G}_m^n$ and $j\colon U\hookrightarrow T$ its embedding. Then, $j(U)$ contains the generic point of $P$ and $T\setminus j(U)$ contains all prime torus invariant divisors of $T$ except for $P$.
\end{proof}

\begin{lemma}\label{lem:toric-model-cluster-type}
Let $(X,B)$ be a cluster type pair.
Let $P$ be a prime component of $B$.
Then, there exists a crepant birational map 
$(T,B_T)\dashrightarrow (X,B)$ satisfying the following conditions:
\begin{enumerate}
\item the pair $(T,B_T)$ is toric, 
\item the center $P_T$ of $P$ in $T$ is a prime divisor,
\item for every prime component $Q\subseteq \supp(B)$ the center
of $Q$ on $T$ is a prime divisor, and 
\item for every prime divisor $Q\subset X$ with $Q\not\subset \supp(B)$ either 
\begin{enumerate} 
\item[(a)] the center of $Q$ in $T$ is a prime divisor, or 
\item[(b)] the center of $Q$ in $T$ is a codimension two subset
which is not contained in $P_T$. 
\end{enumerate} 
\end{enumerate} 
\end{lemma}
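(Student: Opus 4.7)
The strategy is to construct $(T,B_T)$ by refining a toric log Calabi--Yau pair crepant birational to $(X,B)$. By Lemma~\ref{lem:cluster-type-via-tori}, $(X,B)$ is crepant birational equivalent to the toric pair $(\pp^n,\Sigma^n)$. Fix such an equivalence; it identifies $\kk(X)$ with the function field of an algebraic torus with cocharacter lattice $N$, and realizes each component of $B$ as a log canonical place of a toric log Calabi--Yau pair, hence as a primitive vector in $N$. Let $u_P\in N$ be the primitive vector associated to $P$. Take a refinement of the fan of $\pp^n$ whose rays include all the primitive vectors coming from components of $B$; the resulting toric variety $T$ with toric boundary $B_T$ is crepant birational to $(X,B)$, and the center on $T$ of every component of $B$ -- including $P$ -- is a torus-invariant prime divisor. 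This verifies conditions (1)--(3).

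For condition (4), let $Q\subset X$ be a prime divisor with $Q\not\subset\supp(B)$, and let $\bar v_Q\in N$ be the monomial part of the valuation $v_Q$, defined by $m\mapsto v_Q(x^m)$. The center of $Q$ on $T$ is contained in the closure of the torus orbit corresponding to the smallest cone of the fan containing $\bar v_Q$. If $\bar v_Q=0$, this center is a non-toric prime divisor lying in the open dense torus, hence disjoint from $P_T$, and condition (4)(a) holds. If $\bar v_Q$ is nonzero and not a positive multiple of $u_P$, we further refine the fan by adding the ray spanned by $\bar v_Q$ and subdividing around $\rho_P:=\rr_{\geq 0}u_P$ so that this new ray and $\rho_P$ lie in no common cone. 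The center of $Q$ is then contained in a torus orbit closure disjoint from $V(\rho_P)=P_T$, and has codimension one if $v_Q$ coincides with the new toric valuation, or codimension at least two otherwise -- so condition (4)(a) or (4)(b) is satisfied.

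The remaining delicate case is when $\bar v_Q$ is a positive multiple of $u_P$ for some non-$B$ divisor $Q$. Here the center of $Q$ is forced into $P_T$ on any toric refinement retaining $\rho_P$ as a ray, because no toric subdivision alters $\bar v_Q$ or the minimal cone containing it. The remedy is to modify the crepant birational identification itself: by composing with a suitable volume-preserving birational self-map of the torus (such as a cluster mutation), one obtains a different identification of $(X,B)$ with a toric pair under which the monomial parts of the finitely many non-$B$ divisors of $X$ all avoid the line $\rr u_P$, after which the previous two paragraphs apply. Rigorously executing this modification while preserving the cluster type pair structure on $(X,B)$ -- in particular checking that enough volume-preserving birational self-maps are available to separate the monomial parts of the finitely many problematic divisors from $\rho_P$ -- is the main technical obstacle of the argument.
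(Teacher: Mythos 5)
Your handling of conditions (1)--(3) by fan refinement, and your analysis of condition (4) through the monomial part $\bar v_Q$ of each non-boundary divisorial valuation, is reasonable as far as it goes, though even the easy cases are asserted rather than proved: in the case $\bar v_Q=0$ the claim that the center is automatically a prime divisor needs an argument (it follows from $a_Q(T,B_T)=a_Q(X,B)=1$ together with the fact that the minimal log discrepancy at the generic point of a codimension-$c$ subvariety not contained in the toric boundary equals $c$, so a center meeting the dense torus must be divisorial), and your construction only yields ``codimension at least two'' in (4)(b). The genuine gap, however, is the case you yourself flag: when $\bar v_Q\in\rr_{>0}u_P$, no refinement of the fan for the \emph{fixed} torus can help, since the center of $v_Q$ is then contained in $P_T$ on every toric model in which $P$ has divisorial center; and this case really occurs (take $X$ the blow-up of a non-toric point $p$ on a line $L_0$ of the toric pair $(\pp^2,L_0+L_1+L_2)$, $P$ the strict transform of $L_0$, and $Q$ the exceptional divisor, whose monomial part is exactly $u_{L_0}$). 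Your proposed remedy --- compose with a volume-preserving self-map of the torus, ``such as a cluster mutation,'' so that the monomial parts of the finitely many problematic divisors avoid $\rr u_P$ --- is precisely the content of the lemma, and you neither construct such a map nor prove one exists, nor address the simultaneity problem: a mutation curing one $Q_i$ will in general move the monomial parts of the other $Q_j$ (and the vector attached to $P$ itself), so a one-at-a-time argument does not obviously close. Since you explicitly concede this is ``the main technical obstacle,'' the proof is incomplete exactly at its essential step.

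For comparison, the paper performs the required change of torus geometrically rather than by exhibiting an explicit mutation: after dlt modifications it arranges a toric model on which the center $P_{T_0}$ of $P$ is a torus-invariant section of a $\pp^1$-fibration $T_0\to Z$, extracts the finitely many problematic divisors $Q_1,\dots,Q_k$ crepantly (possible because $a_{Q_i}(X,B)=1$), runs an MMP over $Z$ contracting the strict transforms of the fibers through their centers (via \cite[Lemma 2.9]{Lai11}), and uses a complexity computation to see that the output is again a toric log Calabi--Yau pair, now with each $Q_i$ divisorial and with the contracted fibers disjoint from the strict transform of $P$. Some argument of this kind --- or a precise simultaneous separation statement in the spirit of \cite[Lemma 16.2.8]{corti2023cluster} applied to all the $Q_i$ at once while controlling $P$ --- is what your sketch is missing.
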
 

\begin{proof}
Let $(T_0,B_{T_0})\dashrightarrow (X,B)$ be any crepant birational map from a toric log Calabi--Yau pair. 
Possibly replacing $(T_0,B_{T_0})$ with a dlt modification, 
we may assume that for every prime divisor $Q$ in $X$, the center of $Q$ in $T$ is either a prime divisor or a codimension two subvariety contained in a unique torus invariant divisor.
We may further assume that for every prime component $Q\subset \supp(B)$ the center of $Q$ on $T$ is a prime divisor. 
We let $P_{T_0}$ be the center of $P$ on $T_0$ which is a prime torus invariant divisor.
Let $Q_1,\dots,Q_k \subset X$ with $Q_i\not \subset \supp(B)$ be the prime divisors for which 
$c_T(Q_i)$ is contained in $P_{T_0}$.
By possibly replacing $(T_0,B_{T_0})$ with a higher dlt modification, we may assume that there exists a $\pp^1$-fibration $\pi\colon T_0\rightarrow Z$ for which $P_{T_0}$ is a torus invariant section. 
Let $Q_{Z,1},\dots,Q_{Z,k}$ be the images of $c_{T}(Q_1),\dots,c_T(Q_k)$ in $Z$, respectively.
Let $F_1,\dots,F_k$ be the fibers of $Q_{Z_1},\dots,Q_{Z,k}$ via $\pi$, respectively.
Let $Y\rightarrow T_0$ be a blow-up of $T$ extracting the divisors $Q_1,\dots,Q_k$. 
Let $B_Y$ be the strict transform of $B_{T_0}$ in $Y$. 
Let $P_Y$ be the center of $P$ in $Y$.
Then, the pair $(Y,B_Y)$ is a log Calabi--Yau pair.
Let $F_{Y,1},\dots,F_{Y,k}$ be the strict transforms of $F_1,\dots,F_k$ in $Y$. 
Then, for $\epsilon>0$ small enough, the pair
$(Y,B_Y+\epsilon(F_{Y,1}+\dots+F_{Y,k}))$ is dlt. 
We run a $(K_Y+B_Y+\epsilon(F_{Y,1}+\dots+F_{Y,k}))$-MMP 
over $Z$ that terminates after contracting all the divisors $F_{Y,1},\dots,F_{Y,k}$ (see, e.g.,~\cite[Lemma 2.9]{Lai11}).
A complexity computation shows that this MMP must terminate with a toric log Calabi--Yau pair $(T,B_T)$. By construction, conditions $(1)$-$(3)$
are still satisfied by the pair $(T,B_T)$.
Let $P_T$ be the center of $P$ in $T$.
Note that the divisors $F_{Y,1},\dots,F_{Y,k}$ are disjoint from $P_Y$, therefore, after they are contracted, their centers are not contained in the strict transform of $P_Y$.

Let $Q$ be a prime divisor on $X$. 
If $Q$ is a prime component of $B$, then the center of $Q$ in $T_0$ is a prime divisor, and so it is a prime divisor in $T$. 
If $Q$ is a prime divisor in $X$ with $Q\not\subset \supp(B)$, then either its center in $T_0$ is a prime divisor or a codimension two subset which is contained in  a unique prime torus invariant divisor.
Let $Q$ be a prime divisor on $X$ whose center on $T$ has codimension two. 
There are two options: either $c_Q(T_0)$ is a prime divisor, or it has codimension two. 
If $c_Q(T_0)$ has codimension two and $Q=Q_i$ then $c_{Q_i}(T)$ is a prime divisor.
If $c_Q(T_0)=F_i$ for some $i$, then the center of $Q$ in $T$ is contained in a unique prime component of $B_T$, different from $P_T$. 
Finally, if $Q\neq Q_i$ and $c_Q(T_0)\neq F_j$, then 
$c_Q(T)$ is a subvariety of codimension two which is not contained in $P_T$. This finishes the argument of (4).
\end{proof} 

\begin{lemma}\label{lem:affine-partial-compactifications}
Let $(X,B)$ be a $n$-dimensional log Calabi--Yau pair, and let $j\colon \mathbb{G}_m^n\dashrightarrow X\setminus B$ be an open embedding in codimension one. Assume that there exists an ample divisor $0\leq A \leq B.$. Then there exists an affine toric variety $V$ and a morphism $\widetilde{j}\colon V \rightarrow X\setminus A$, unique up to unique isomorphism, satisfying the following conditions:
\begin{enumerate}
    \item $\widetilde{j}|_{\mathbb{G}_m^n}=j,$
    \item strict transform along $\widetilde{j}$ induces a bijection between the invariant divisors on $V$ and the components of $B\setminus A$,
    \item no divisor with generic point in $X\setminus \widetilde{j}(V)$ has center on $V.$
\end{enumerate}
In particular, $\widetilde{j}$ is an open embedding in codimension one.
\end{lemma}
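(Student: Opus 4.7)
The plan is to construct $V$ as the affine toric variety associated to the cone generated by the toric valuations of the components of $B\setminus A$, and to realize $\widetilde{j}$ as the restriction to $V$ of a natural rational map from an ambient toric compactification of $\mathbb{G}_m^n$.

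First, I would extract the lattice data from the cluster-type structure. By Lemma~\ref{lem:cluster-type-via-tori}, the pair $(X,B)$ is cluster type, hence crepant birational to the toric log Calabi--Yau pair $(\pp^n,\Sigma)$ via an extension of $j$. Because $\Omega$ has no zeros on $X$ and the pull-back-of-$\Omega$ ramification formula on blow-ups shows that only toric exceptional divisors can support poles, each prime component $P_i$ of $B$ satisfies $v_{P_i}=v_{u_i}$ for a unique primitive $u_i\in N$, and distinct components give distinct $u_i$. Let $I$ index the components of $B\setminus A$, set $\sigma:=\operatorname{cone}(u_i : i\in I)\subseteq N_\rr$, and let $V:=U_\sigma$. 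To construct the morphism, choose a toric compactification $\bar T$ of $\mathbb{G}_m^n$ whose fan contains the rays $\{u_i\}_{i\in I\cup J}$, where $J$ indexes components of $A$, and identify $V\subseteq \bar T$ with the affine open of the cone $\sigma$. The rational map $\bar T\dashrightarrow X$ extending $j^{-1}$ restricts on $V$ to a morphism $\widetilde{j}\colon V\to X\setminus A$: the potential indeterminacy and contracted locus can be taken to lie on the invariant divisors $D_{u_j}$ for $j\in J$ (and in codim-$\geq 2$ toric strata), none of which meets $V$ since $u_j\notin \sigma$; the image avoids $A$ for the same reason. Ampleness of $A$ forces $X\setminus A$ to be affine of dimension $n$, and dominance of $\widetilde{j}$ forces $\dim V=n$, hence the strong convexity of $\sigma$.

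Conditions (1)--(3) then follow. Condition (1) is immediate from the construction, since $\widetilde{j}$ restricts to $j$ on the open torus $\mathbb{G}_m^n\subseteq V$. For condition (2), each invariant divisor $D_{u_i}\subseteq V$ maps via $\widetilde{j}$ onto $P_i\cap (X\setminus A)$, a dense open of $P_i$, giving the required bijection. For condition (3), I would use the general principle that for a divisorial valuation $v$ on $\kk(M)$ coming from a prime divisor $D$ of $X$, any non-trivial center of $v$ on $V=U_\sigma$ has image under $\widetilde{j}$ specializing to the generic point of $D$; if $D\subseteq X\setminus \widetilde{j}(V)$, this would force the generic point of $D$ to lie in $\widetilde{j}(V)$, a contradiction. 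Uniqueness up to unique isomorphism is routine: (2) pins down the rays of the defining cone and hence $V$, while (1) and the density of $\mathbb{G}_m^n$ in $V$ pin down the morphism.

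The main obstacle is the first step: showing each $v_{P_i}$ is genuinely a toric valuation and not merely one whose restriction to monomials is linear. The tropicalization $u_i\in N$ is always defined, but in principle $v_{P_i}(f)$ could exceed $\min_{m\in\Supp(f)}\langle u_i,m\rangle$ through cancellation, which would allow distinct $P_i$ to share the same tropicalization and break the bijection in condition (2). The "no zeros of $\Omega$" hypothesis of cluster type, combined with the blow-up ramification analysis of the toric volume form, is precisely what rules out such non-toric polar contributions and secures the required toric structure on each component of $B$.
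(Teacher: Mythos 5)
There is a genuine gap, and it sits exactly at the step you dispose of in one clause: the assertion that the rational map $\bar T\dashrightarrow X$ ``restricts on $V$ to a morphism $\widetilde{j}\colon V\to X\setminus A$'' because ``the potential indeterminacy and contracted locus can be taken to lie'' on the divisors $D_{u_j}$, $j\in J$, and on codimension-two toric strata missing $V$. Nothing in your setup lets you arrange this. The indeterminacy locus of the birational map is whatever it is; it is only constrained to have codimension $\geq 2$, and it can perfectly well meet $V$ --- indeed it can meet the open torus itself, since $j$ is only assumed to be an open embedding in codimension one and may be undefined along a codimension-two subset of $\mathbb{G}_m^n$ (which is not a ``toric stratum''). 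Choosing $\bar T$ so that its fan contains the rays $u_i$ does not control where the map to $X$ is undefined, nor where points of $V$ outside the torus and outside the generic points of the $D_{u_i}$ land (they could a priori land in $A$). Producing the morphism is the actual content of the lemma, and the paper's proof is organized around it: it passes through a $\qq$-factorial dlt modification $(Y,B_Y)\to(X,B)$ and a toric model $(T,B_T)$, pushes $A$ to $A_T$, replaces $T$ by a small modification and takes the ample model $h\colon T\to S$ of $A_T$ so that $A_S=h_*A_T$ is ample with $A_T=h^*A_S$; then it shows $S\setminus A_S\dashrightarrow X\setminus A$ is a morphism on a big open subset (resolution of $Y\dashrightarrow S$ plus comparison of exceptional loci) and extends it over the remaining codimension-two locus using that $S\setminus A_S$ is normal and affine and $X\setminus A$ is affine (complement of an ample divisor). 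Your proposal never invokes this normal-affine extension mechanism, which is what actually defeats the indeterminacy; without it the claimed morphism is unproven.

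Two secondary points. First, your cone $\sigma=\operatorname{cone}(u_i:i\in I)$ must be strongly convex \emph{and} each $u_i$ must be an extremal ray of $\sigma$, or else $U_\sigma$ is not the right object and condition (2) fails (if some $u_i$ lay in the cone spanned by the others, it would contribute no invariant divisor of $V$). Your argument ``dominance of $\widetilde j$ forces $\dim V=n$, hence strong convexity'' is circular --- $U_\sigma$ contains $\mathbb{G}_m^n$ as a dense torus only if $\sigma$ is already strongly convex --- and extremality is not addressed at all. In the paper both come for free: $\sigma$ is by construction a subcone of the strongly convex cone of the affine toric variety $S\setminus A_S$, spanned by a subset of its rays, and getting the $u_i$ realized as rays of a single such cone is precisely what the ample-model step accomplishes. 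Second, the difficulty you single out at the end (that $v_{P_i}$ is genuinely a toric valuation) is in fact the routine part: the components of $B$ are log canonical places of a pair crepant birational to $(\pp^n,H_0+\cdots+H_n)$, and divisorial lc places of a toric log Calabi--Yau pair are exactly the toric valuations, so distinct components give distinct primitive $u_i$. The real work lies in the morphism claim (and, to a lesser extent, in condition (3), where your ``specialization'' argument is too loose --- the paper handles it cleanly by compactifying $V$ toricly and comparing centers on that compactification).
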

\begin{proof}
There exists a commutative diagram
\[
\xymatrix{ 
(Y,B_Y) \ar[d]_-{f}\ar@{-->}[rd]^-{\pi} & \\
(X,B) & (T,B_T) \ar@{-->}[l]^-{\phi} 
}
\]
satisfying the following conditions:
\begin{enumerate}
    \item[(i)] $(T,B_T)$ is a simplicial projective toric log Calabi--Yau pair on which every component of $B$ has divisorial center,
    \item[(ii)] $\phi$ is the crepant birational map satisfying $\phi|_{\mathbb{G}_m^n}=j,$
    \item[(iii)] $f$ is a $\qq$-factorial dlt modification, and
    \item[(iv)] $\pi$ is a contracting birational map.
\end{enumerate}
Denote by $A_Y=f^*A,$ and let $A_T$ be the strict transform on $T$ of $A_Y.$ Since the divisor $A_Y$ is big and semiample, it follows from ~\cite[Lemma 3.39]{KM98} and condition (iv) above that $A_T$ is big and movable. Replacing $T$ with a small $\qq$-factorial modification, we may assume that $A_T$ is big and semiample. Let $h\colon (T,B_T)\rightarrow (S,B_S)$ be the ample model of $A_T$ and write by $A_S=h_*A_T.$ Thus, $h$ is a toric birational morphism and $0\leq A_S\leq B_S$ is an ample divisor satisfying $A_T=h^*A_S.$\par
We claim that the induced birational map $\psi=\phi\circ h^{-1}\colon S\setminus A_S\dashrightarrow X\setminus A$ is a morphism. Let 
\[
\xymatrix{ 
 & Z \ar[dl]_{p} \ar[dr]^q&\\
Y & & S 
}
\]
be a resolution of the indeterminacy of $h\circ \pi.$ Denote by $E\subset Z$ the $p$-exceptional locus and by $F\subset Z$ the $q$-exceptional locus. The subset $E$ is purely divisorial, as $Y$ is $\qq$-factorial by (iii). It follows from this and (iv) that $E\subset F$. Thus, the big open $S\setminus q(F)$ in $S$ is isomorphic to the open subset $Y\setminus p(F)$ of $Y.$ It follows that $S\setminus (A_S\cup q(F))$ is isomorphisc to $Y\setminus(A_Y\cup p(F)).$ Since $A_Y=f^*A$ and $Y\setminus(A_Y\cup p(F))\subset Y\setminus A_Y,$ it follows that we have a morphism $S\setminus (A_S\cup q(F))\rightarrow X\setminus A.$ But $S\setminus(A_S\cup q(F))$ is a big open in the normal affine variety $S\setminus A_S$ and $X\setminus A$ is affine, so this morphism factors through a morphism $S\setminus A_S\rightarrow X\setminus A.$ \par
It follows from the fact that $S\setminus A_S\rightarrow X\setminus A$ is a morphism that every component of $B\setminus A$ has divisorial center on $S\setminus A.$ Indeed, $h$ restricts to a morphism $T\setminus A_T\rightarrow S\setminus A_S$ since $A_T=h^*A_S$. Thus, the restriction to $T\setminus A_T$ of $\phi$ is a morphism that factors as a composite of two morphisms $$T\setminus A_T\xrightarrow{h} S\setminus A_S \xrightarrow{\psi}X\setminus A.$$
Let $D$ be a component of $B\setminus A$. It follows from (i) that $D$ has divisorial center on $T\setminus A_T,$ say $D_T\subset T\setminus A_T,$ and $\phi(D_T)$ is dense in $D.$ Denoting by $D_S$ the closure in $S\setminus A_S$ of $h(D_T),$ it follows from the factorization $\phi=\psi\circ h$ above that $\psi(D_S)$ is dense in $D.$ Thus, $D_S$ must be a divisor on $S\setminus A_S.$ \par
Now let $V$ be the affine toric variety corresponding to the subcone of that of $S\setminus A_S$ spanned by the rays corresponding to the components of $B\setminus A_S.$ We have a toric morphism $V\rightarrow S\setminus A_S,$ hence a morphism $V\rightarrow X\setminus A.$ This is the desired morphism $\widetilde{j}.$ It is clear that (1) and (2) together imply that $\widetilde{j}$ is an open embedding in codimension one. To show the asserted uniqueness, let us suppose that we are given another such $\widetilde{j}'\colon V'\rightarrow X\setminus A.$ We obtain a birational map $\widetilde{j}^{-1}\circ \widetilde{j}'\colon V'\dashrightarrow V$ between normal affine varieties which is an isomorphism in codimension one. It follows that $\widetilde{j}^{-1}\circ \widetilde{j}'$ is a biregular isomorphism.\par
Finally, we show that condition (3) holds for the morphism $\widetilde{j}$. Choose a toric compactification $W$ of $V,$ and denote by $B_W$ the toric boundary of $W$ and by $A_W$ the reduced sum of those components of $B_W$ lying in $W\setminus V.$ Denote by $\theta\colon (W,B_W) \dashrightarrow (X,B)$ the crepant birational map with $\theta|
_V=\widetilde{j}.$ Let $D\subset X$ be a prime divisor with generic point in $X\setminus \widetilde{j}(V).$ If $D$ is not $\theta^{-1}$-exceptional, then it follows from condition (2) that the center on $W$ of $D$ is a component of $A_W$. Thus, we may assume that $D$ is $\theta^{-1}$-exceptional. In this case, the center on $W$ of $D$ cannot meet the domain of $\theta.$ But $V$ is contained in the domain of $\theta.$ 
\end{proof}

\section{Varieties from Lie theory are cluster type}\label{sec:comb}
In this section, we prove that many varieties coming from Lie theory are indeed of cluster type. We refer the reader to~\cite{Brio05} and~\cite{BK05} for background on the generalized flag variety.

Let $G$ be a semisimple algebraic group, with Borel and opposite Borel subgroup $B,B^-\subset G$. We use the notation $\Lambda \supset \Lambda^+ \supset \Lambda^{++}$ for the weight lattice, the pointed cone of dominant weights, and its interior consisting of the regular dominant weights. We denote the roots $\Phi\subset \Lambda$, and the positive roots $\Phi^+\subset \Phi$, and write $\omega_1,\ldots,\omega_n$ for the fundamental dominant weights that generate $\Lambda^+$.

The \emph{generalized flag variety} $G/B$ has the structure of an algebraic variety as follows: given a dominant weight $\lambda\in \Lambda^{+}$, there is a highest weight representation $V_\lambda$, with highest weight vector $v_\lambda$, and $B$ is contained in the stabilizer of $\langle v_\lambda \rangle\subset \mathbb{P}(V_\lambda)$. This induces a map $G/B\to \mathbb{P}(V_\lambda)$, and when $\lambda\in \Lambda^{++}$ this is an injection which induces an algebraic structure on $G/B$.

The Picard group of $G/B$ is identified with $\Lambda$, and we write $\mathcal{L}_\alpha$ for the line bundle associated to $\alpha\in \Lambda$. 
A line bundle $\mathcal{L}_\lambda$ on $G/B$ is (very) ample if and only if $\lambda\in \Lambda^{+}$ (resp. $\Lambda^{++}$), and in this case it corresponds to the morphism $G/B\to \mathbb{P}(V_\lambda)$ described above.


There is a distinguished regular dominant integral weight $\rho=\sum_{i=1}^n \omega_i=\frac{1}{2}\sum_{\alpha\in \Phi^+}\alpha\in \Lambda^{++}$ defined as either the sum of the dominant weights, or the half-sum of the positive roots. It is well known that $G/B$ is a Fano variety with anticanonical divisor $-K_{G/B}=\mathcal{L}_{2\rho}$.

Let $W$ be the Weyl group of $G$ with respect to the maximal torus $T:=B \cap B^-$, and $\ell : W \longmapsto \nn$ be the length function determined by the inclusion $T \subset B$.
For $v,u \in W$, the corresponding Schubert and opposite Schubert cells are $\mathring{X}^v=BvB/B$ and $\mathring{X}_u=B^-uB/B$. If $u\le v$ in the Bruhat order, the open Richardson variety is defined to be $\mathring{X}^v_u=\mathring{X}^v\cap \mathring{X}_u$. In fact, this variety is empty if $u\not\le v$, so this may be used as the definition of the Bruhat order.

The Schubert variety and opposite Schubert varieties are defined to be $X^v=\overline{\mathring{X}^v}=\overline{BvB}/B$ and $X_u=\overline{\mathring{X}_u}=\overline{BuB}/B$. For $u\le v$ the Richardson variety is defined to be $X^v_u=\overline{X^v_u}=X^v\cap X_u$. Clearly the Richardson variety is a compactification of the open Richardson variety. Let $\partial X^v_u=X^v_u\setminus \mathring{X}^v_u$. Denoting $x\prec y$ to mean $x\le y$ in the Bruhat order and $\ell(y)=\ell(x)+1$, we have
\begin{equation}\label{eq:bruhat-order} 
\partial X^v_u=X^v_u\setminus \mathring{X}^v_u=\left(\bigcup_{u\prec u'\le v}X^v_{u'}\right)\cup \left(\bigcup_{u\le v'\prec v}X^{v'}_u\right).
\end{equation} 

\begin{theorem}\label{thm:cluster-type-x^v_u}
The pair $(X^v_u,\partial X^v_u)$ is a cluster type pair.
\end{theorem}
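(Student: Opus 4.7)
The plan is to reduce the statement to the hypotheses of Lemma~\ref{lem:cluster-type-via-tori}. Setting $n := \ell(v) - \ell(u) = \dim X^v_u$, I must verify (a) that $(X^v_u, \partial X^v_u)$ is a log Calabi--Yau pair and (b) that there exists an embedding in codimension one $\mathbb{G}_m^n \dashrightarrow \mathring{X}^v_u$.

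For step (a), I would invoke the brick manifold compactification $\widetilde{X}^v_u$ of $\mathring{X}^v_u$ supplied by Theorem~\ref{introthm:brick-richardson}: it is smooth with simple normal crossing anti-canonical boundary $\widetilde{B}^v_u = \widetilde{X}^v_u \setminus \mathring{X}^v_u$, so $(\widetilde{X}^v_u, \widetilde{B}^v_u)$ is a dlt log Calabi--Yau pair. The brick manifold construction produces a natural birational morphism $\pi\colon \widetilde{X}^v_u \to X^v_u$ which is the identity over $\mathring{X}^v_u$, and therefore satisfies $\pi_*\widetilde{B}^v_u = \partial X^v_u$. Pushing forward the relation $K_{\widetilde{X}^v_u} + \widetilde{B}^v_u \sim 0$ yields $K_{X^v_u} + \partial X^v_u \sim 0$ as Weil divisors; crepantness of $\pi$ together with the dlt condition upstairs and the negativity lemma force $K_{X^v_u} + \partial X^v_u$ to be $\qq$-Cartier and the pair $(X^v_u, \partial X^v_u)$ to be log canonical. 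Alternatively, log canonicity can be sourced directly from Kumar--Schwede~\cite{KS14}.

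For step (b), I would use the locally acyclic cluster algebra structure on $\mathring{X}^v_u$ established in~\cite{Leclerc16strata, GLSB23, CGGLSS25}: any seed provides an open torus chart $\mathbb{G}_m^n \hookrightarrow \mathring{X}^v_u$, which gives the required embedding in codimension one into $X^v_u \setminus \partial X^v_u$. Invoking Lemma~\ref{lem:cluster-type-via-tori} then concludes the proof.

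The hard part will be step (a), specifically verifying $\qq$-Cartierness of $K_{X^v_u} + \partial X^v_u$ and log canonicity of the pair, since Richardson varieties need not be $\qq$-factorial or Gorenstein in general. The brick manifold descent is the cleanest route, with the key technical ingredients being the crepantness of $\pi$ and the identification $\pi_*\widetilde{B}^v_u = \partial X^v_u$; both follow from the fact that $\pi$ restricts to the identity over the dense open $\mathring{X}^v_u$. Step (b) is largely a repackaging of known cluster algebra results on open Richardson varieties, and the final invocation of the lemma is immediate once (a) and (b) are in hand.
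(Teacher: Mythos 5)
Your proposal is essentially correct but proves the log Calabi--Yau part by a genuinely different route than the paper. The paper argues intrinsically on the Richardson variety: Brion's formula identifies $\partial X^v_u$ as an anticanonical divisor, and log canonicity of $(X^v_u,\partial X^v_u)$ is obtained by induction on dimension along the boundary stratification \eqref{eq:bruhat-order}, using normality and irreducibility of Richardson varieties \cite[Corollary 4.7]{KLS14} together with inversion of adjunction \cite{Hac14}; the torus chart in $\mathring{X}^v_u$ is then quoted from \cite{BFZ05}, as in your step (b). You instead descend the log Calabi--Yau structure from the brick resolution: since $Z^v_u$ is smooth with reduced snc boundary satisfying $K_{Z^v_u}+\partial Z^v_u\sim 0$ and $\pi_*\partial Z^v_u=\partial X^v_u$, pushing forward gives $K_{X^v_u}+\partial X^v_u\sim 0$ (hence Cartier), the negativity lemma upgrades $\pi$ to a crepant morphism, and log canonicity descends. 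This is a valid argument and arguably more self-contained on the singularity side (it avoids inversion of adjunction and the induction), at the price of importing the brick-variety facts; the paper's induction, by contrast, never needs the resolution and is what allows Theorem~\ref{thm:crepant-resol} to be deduced afterwards.

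Three caveats. First, within the paper's logical structure you must not cite Theorem~\ref{introthm:brick-richardson} (or Theorem~\ref{thm:crepant-resol}) here: the paper proves those using Theorem~\ref{thm:cluster-type-x^v_u}. The facts you actually use --- smoothness of $Z^v_u$, snc boundary, $[\partial Z^v_u]=-K_{Z^v_u}$, and that the projection restricts to an isomorphism over the open Richardson --- are external (\cite[\S 4.2]{Brio05}, \cite[Lemma A.2]{KLS14}) and should be cited directly, which removes the circularity. Second, your stated order of deductions is backwards: you cannot invoke ``crepantness of $\pi$'' before knowing $K_{X^v_u}+\partial X^v_u$ is $\qq$-Cartier, since crepantness is defined via its pullback; the correct chain is pushforward $\Rightarrow$ $K_{X^v_u}+\partial X^v_u\sim 0$ (so Cartier) $\Rightarrow$ negativity lemma $\Rightarrow$ crepant $\Rightarrow$ lc. Relatedly, $\pi_*\partial Z^v_u=\partial X^v_u$ does not follow formally from $\pi$ being an isomorphism over $\mathring{X}^v_u$ alone; one also needs that $\pi(\partial Z^v_u)\subseteq \partial X^v_u$ (equivalently $\pi^{-1}(\mathring{X}^v_u)=Z^v_u\setminus\partial Z^v_u$), which holds by the brick construction but should be said. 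Third, normality of $X^v_u$ (again \cite[Corollary 4.7]{KLS14}) must be invoked for the pushforward of canonical divisors and the notion of pair to make sense; your alternative appeal to \cite{KS14} for log canonicity is also legitimate.
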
 

\begin{proof}  
We have $c_1(-K_{G/B})=[\partial X^v_u]=\left(\bigcup_{u\prec u'\le v}X^v_{u'}\right)\cup \left(\bigcup_{u\le v'\prec v}X^{v'}_u\right)$ by~\cite[Theorem 4.2.1]{Brio05}. Thus, it suffices to show that 
$(X_u^v,\partial X_u^v)$ has log canonical singularities.
Each Richardson variety is normal and irreducible by \cite[Corollary 4.7]{KLS14}. Then, the statement follows from equality~\eqref{eq:bruhat-order}, induction on the dimension, and inversion of adjunction~\cite{Hac14}.
The fact that $X_u^v\setminus \partial X_u^v$ contains an algebraic torus is standard, see, e.g.~\cite{BFZ05}.
\end{proof} 

We can in fact identify a precise crepant resolution of the pair $(X^v_u,\partial X^v_u)$ that has appeared in the connections between open Richardson varieties and cluster algebras. Let $w_o$ be the longest element of the Weyl group. We follow~\cite[Appendix A]{KLS14}. Fix reduced words $v=s_{i_1}\cdots s_{i_{r}}$ and $w_ou=s_{i_{r+s}}\cdots s_{i_{r+1}}$. Then we define 
$$Z^{v}_{u}=\{(B=g_1B,\ldots,g_{r+s+1}B=w_oB) \mid g_{j}^{-1}g_{j+1}\in  X^{s_{i_j}}\}\subset (G/B)^{r+s+1}.$$
There is a map $Z^v_u\to X^v_u$ given by  projection onto $g_{i_r}B$, and defining $\partial Z^v_u$ to be the locus where $g_iB=g_{i+1}B$ for some $i$, this restricts to an isomorphism
$$Z^v_u\setminus \partial Z^v_u\to X^v_u\setminus \partial X^v_u.$$

The compactification $Z^v_u$ is known as the \emph{brick compactification} of $\mathring{X}^v_u$.

\begin{theorem}\label{thm:crepant-resol}
The morphism $\pi\colon (Z_u^v,\partial Z_u^v)\rightarrow 
(X_u^v,\partial X_u^v)$ is a crepant log resolution of singularities. 
Furthermore, the divisor $\partial Z_u^v$ supports an effective ample divisor.
\end{theorem}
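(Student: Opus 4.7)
The plan is to leverage the description of $Z^v_u$ as a fiber of a Bott--Samelson variety, reducing each of the three assertions---smoothness with SNC boundary, crepancy, and existence of an effective ample divisor supported in the boundary---to known computations. First, I would invoke \cite[Appendix A]{KLS14} for the smoothness of $Z^v_u$ and for the fact that $\pi$ restricts to an isomorphism over $X^v_u \setminus \partial X^v_u$. The boundary $\partial Z^v_u$ is the union of the section-type divisors $D_j = \{g_jB=g_{j+1}B\}\cap Z^v_u$, and it inherits simple normal crossings from the transversality of the corresponding strata in the ambient Bott--Samelson variety.

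To establish crepancy, I would compute $K_{Z^v_u}+\partial Z^v_u$ via adjunction along the Bott--Samelson fibration and the standard canonical class formula for Bott--Samelson varieties in \cite{BK05}, arriving at $K_{Z^v_u}+\partial Z^v_u \sim 0$. Combined with $K_{X^v_u}+\partial X^v_u\sim 0$ from Theorem~\ref{thm:cluster-type-x^v_u}, the discrepancy divisor
\[
K_{Z^v_u}+\partial Z^v_u - \pi^*(K_{X^v_u}+\partial X^v_u) = \sum_i a_i E_i
\]
is $\pi$-exceptional and linearly equivalent to zero, so the negativity lemma forces $a_i = 0$ for all $i$, yielding crepancy.

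For the ample divisor, I would invoke \cite{AS14} to produce an effective ample divisor $A$ on the ambient Bott--Samelson variety supported on its boundary. The restriction $A|_{Z^v_u}$ is then effective and ample, and is supported in $\partial Z^v_u$ because the boundary of the Bott--Samelson variety intersects $Z^v_u$ precisely in $\partial Z^v_u$. The main obstacle is the verification of $K_{Z^v_u}+\partial Z^v_u\sim 0$: while \cite[Theorem 4.2.1]{Brio05} handles the analogous statement on $X^v_u$, the resolution $Z^v_u$ requires careful bookkeeping of the canonical class under both the Bott--Samelson fibration and the endpoint constraint $g_{r+s+1}B=w_oB$, and this is where the inputs from \cite{KLS14} and \cite{BK05} are most essential.
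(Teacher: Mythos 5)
Your proposal is correct in substance and reaches the statement by a mildly different route than the paper. The paper simply quotes the brick-variety facts directly: smoothness and simple normal crossings of $\partial Z^v_u$ from \cite[Subsection 4.2]{Brio05}, the identity $[\partial Z^v_u]=-K_{Z^v_u}$ from \cite[Lemma A.2]{KLS14}, and the effective ample divisor supported on $\partial Z^v_u$ from \cite[Lemma A.6]{KLS14}; crepancy then follows exactly as you argue (both pairs are log Calabi--Yau by Theorem~\ref{thm:cluster-type-x^v_u}, $\pi_*\partial Z^v_u=\partial X^v_u$, so the difference is a $\pi$-exceptional divisor $\qq$-linearly equivalent to zero, hence zero by negativity). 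What you do differently is to re-derive the two KLS14 lemmas from the ambient Bott--Samelson variety: $K_{Z^v_u}+\partial Z^v_u\sim 0$ by restricting the canonical class formula of \cite{BK05} to the fiber over $w_oB$ (the term $\mathcal{L}_{\underline{w}}(\rho)$ is pulled back from $G/B$, hence trivial on the fiber, and the normal bundle of the fiber is trivial), and the ample divisor by restricting an effective ample supported on the Bott--Samelson boundary (\cite{AS14}, ultimately Lauritzen--Thomsen) to $Z^v_u$. This is a legitimate and somewhat more self-contained argument, but be aware that the adjunction bookkeeping is precisely what \cite[Lemma A.2]{KLS14} packages: you need that the fiber $Z^v_u$ is reduced, irreducible, and of the expected dimension, and that each Bott--Samelson boundary divisor meets it properly and with multiplicity one, so that $(K+\partial)|_{Z^v_u}=K_{Z^v_u}+\partial Z^v_u$ on the nose; these points (established in \cite[Appendix A]{KLS14} via Frobenius splitting) should be cited explicitly rather than absorbed into ``careful bookkeeping.'' Likewise, for the exceptionality of the discrepancy divisor you should record why $\pi_*\partial Z^v_u=\partial X^v_u$ (every component of $\partial Z^v_u$ maps into $\partial X^v_u$ since $\pi^{-1}(X^v_u\setminus\partial X^v_u)=Z^v_u\setminus\partial Z^v_u$, and surjectivity of $\pi$ gives the reverse containment); with these details added, your argument is complete and equivalent to the paper's.
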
 

\begin{proof} 
It is known that $Z^v_u$ is smooth with $\partial Z^v_u$ a simple normal crossings divisor (see~\cite[Subsection 4.2]{Brio05}), and  $[\partial Z^v_u]=-K_{Z^v_u}$ (see, e.g.,~\cite[Lemma A.2]{KLS14}).
Thus, we conclude that $(Z_u^v,\partial Z_u^v)$ is a log smooth log Calabi--Yau pair.
The fact that $\partial Z_u^v$ supports an ample effective divisor is proved in~\cite[Lemma A.6]{KLS14}.
By Theorem~\ref{thm:cluster-type-x^v_u}, we know that $(X_u,\partial X_u^v)$ is log Calabi--Yau.
By construction, we know that $\pi_*\partial Z_u^v= \partial X_u^v$. Therefore, the projective birational morphism $\pi$ is a crepant birational map between the log Calabi--Yau pairs.
\end{proof} 

\begin{proof}[Proof of Theorem~\ref{introthm:brick-richardson}]
The statements (1)-(4) are contained in Theorem~\ref{thm:crepant-resol}.
\end{proof} 

\begin{proof}[Proof of Corollary~\ref{introcor:var-from-comb-ct}]
All the varieties in (1)-(6) are isomorphic to either $X_u^v$ or $Z_u^v$ above.
As $X_u^v\setminus \partial X_u^v \simeq Z_u^v\setminus \partial Z_u^v$, we conclude that 
$Z_u^v$ is of cluster type. 
As $\partial Z_u^v$ supports an effective ample divisor, we conclude that $Z_u^v$ and so $X_u^v$ are Fano type.
\end{proof}

\begin{remark} 
{\em 
     Given any word $\underline{w}=(s_{i_1},\ldots,s_{i_{p}})$  we may similarly define
     $$Z(\underline{w})=\{(B=g_1B,\ldots,g_{p}B=w_oB) : g_{j}^{-1}g_{j+1}\in  X^{s_{i_j}}\}\subset (G/B)^{p},$$
     and the boundary $\partial Z(\underline{w})$ is the subset where $g_iB=g_{i+1}B$ for some $i$. Then $\mathring{X}(\underline{w})=Z(\underline{w})\setminus \partial Z(\underline{w})$ is a smooth variety known as a \emph{braid variety} (see for example~\cite{CGGS24,GLSB23,CGGLSS25}) and $Z(\underline{w})$ is a simple normal crossings compactification of $\mathring{X}(\underline{w})$ known as the brick compactification. Our results would extend to arbitrary brick compactifications and braid varieties if we knew that $\partial Z(\underline{w})$ supported an effective ample and $[\partial Z(\underline{w})]=-K_{Z(\underline{w})}$. We anticipate this result would follow by carrying out the analogous Frobenius splitting arguments from~\cite[Appendix A]{KLS14}, but we have not verified this.}
\end{remark} 

\section{Geometry of mutation semigroup algebras}
\label{sec:geom}
In this section, we prove a geometric characterization of 
mutation semigroup algebras 
with mild singularities. 
More precisely, we prove the following theorem.

\begin{theorem}\label{thm:geom-msa}
Let $\kk$ be an algebraically closed field of characteristic zero.
Let $R$ be a finitely generated commutative ring over $\kk$.
Assume that $U:={\rm Spec}(R)$ has klt singularities. 
Then, the following statements are equivalent:
\begin{enumerate}
\item The ring $R$ is a mutation semigroup algebra over $\kk$. 
\item There is an open embedding in codimension one 
$\mathbb{G}_m^n \dashrightarrow U$ such that the volume form $\Omega$ of $\mathbb{G}_m^n$ vanishes nowhere on $U$.  
\item There exists a cluster type pair $(X,B)$
and an ample divisor $0\leq A \leq B$ such that
$R\simeq \Gamma(X\setminus A, \mathcal{O}_X)$.
\end{enumerate}
\end{theorem}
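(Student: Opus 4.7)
The plan is to prove the equivalences cyclically as $(3) \Rightarrow (2) \Rightarrow (1) \Rightarrow (3)$. The easy directions are $(3) \Rightarrow (2)$ and $(1) \Rightarrow (2)$. For the former, a cluster type pair $(X,B)$ carries by definition a codimension-one torus embedding $\mathbb{G}_m^n \dashrightarrow X$ along which $\Omega$ has no zeros on $X$, and both properties restrict to the open subset $U = X \setminus A$. For the latter, the initial embedded semigroup algebra $\iota_0: \kk[\sigma_0^\vee \cap M] \hookrightarrow {\rm Frac}(R)$ has image $R_0 \supseteq R$, so the resulting morphism $U_{\sigma_0} = \Spec R_0 \to U = \Spec R$ is a codimension-one open embedding by condition (3) of Definition~\ref{def:MSA}. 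The torus $\mathbb{G}_m^n \subset U_{\sigma_0}$ therefore provides a codimension-one embedding into $U$, and a direct Jacobian computation shows that the mutation formula $\mu^*(x^m) = x^m h^{-\langle u, m \rangle}$ preserves the torus volume form up to sign; this propagates the absence of zeros of $\Omega$ through each chart $R_i$, hence across all of $U$.

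For $(2) \Rightarrow (3)$, I would first choose a normal projective compactification $U \hookrightarrow \bar U$ in which $\bar U \setminus U$ is the support of an ample Cartier divisor, which is possible because $U$ is affine. Extending the codimension-one torus embedding gives a birational map $\pp^n \dashrightarrow \bar U$, and pulling back the log Calabi--Yau pair $(\pp^n, \Sigma^n)$ along a common resolution produces a sub-log Calabi--Yau structure $(\bar U, B_{\bar U})$. The hypothesis that $\Omega$ has no zeros on $U$ forces the negative part $B_{\bar U}^-$, as well as any prime components of $\bar U \setminus U$ absent from $\supp(B_{\bar U})$, to lie entirely outside $U$. A suitable MMP with scaling, driven by the ample polarization on $\bar U \setminus U$, contracts all such components and terminates in a log Calabi--Yau pair $(X,B)$ in which $X \setminus U$ is precisely the support of an effective ample divisor $A \le B$. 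Lemma~\ref{lem:cluster-type-via-tori} then certifies that $(X,B)$ is of cluster type, and $R \simeq \Gamma(X \setminus A, \mathcal{O}_X)$ by construction.

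For $(3) \Rightarrow (1)$, let $P_0, P_1, \ldots, P_n$ enumerate the prime components of $B$ lying in $U = X \setminus A$. For each $i$, apply Lemma~\ref{lem:toric-model-cluster-type} to produce a toric model $(T_i, B_{T_i}) \dashrightarrow (X,B)$ in which $P_i$ has divisorial center, then invoke Lemma~\ref{lem:affine-partial-compactifications} to obtain an affine toric variety $V_i = \Spec \kk[\sigma_i^\vee \cap M]$ together with a codimension-one open embedding $V_i \hookrightarrow U$. This yields embedded semigroup algebras $\iota_i: \kk[\sigma_i^\vee \cap M] \hookrightarrow {\rm Frac}(R)$ with images $R_i \supseteq R$. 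The equality $R = R_0 \cap \cdots \cap R_n$ then follows from the $S_2$-property of the klt ring $R$ together with the fact that the $V_i$ cover $U$ in codimension one, while the height condition in Definition~\ref{def:MSA} is automatic from the codimension-one embedding.

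The main obstacle lies in verifying that each pair $(\iota_0, \iota_i)$ differs by a mutation in the precise sense of Definition~\ref{def:mutation}. This requires explicitly identifying a mutation datum $(u_i, h_i)$ --- with $u_i$ the primitive ray generator of the cone of $T_i$ corresponding to the extracted divisor $P_{T_i}$, and $h_i \in \kk[u_i^\perp \cap M]$ the Laurent polynomial whose vanishing locus in $T_0$ equals the strict transform of $P_{T_i}$ --- and checking both that $h_i = g_i^{k_i}$ for $g_i$ irreducible and that the admissibility condition $u_i \notin \sigma_i$ is satisfied. This reduces to a careful analysis of how the two-step toric birational transformation produced by Lemma~\ref{lem:toric-model-cluster-type} (extracting $P_i$ while contracting a codimension-two locus not contained in $P_{T_0}$) acts on the torus characters of $T_0$ and $T_i$.
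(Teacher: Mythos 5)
Your overall cycle $(1)\Rightarrow(2)\Rightarrow(3)\Rightarrow(1)$ matches the paper's, and your sketches of $(1)\Rightarrow(2)$ and $(2)\Rightarrow(3)$ are in the right spirit (the paper's $(2)\Rightarrow(3)$ is considerably more involved: principalization via Gruson--Raynaud flattening, several MMPs whose legitimacy rests on first establishing that the intermediate model is of Fano type, and a final passage to the ample model of the big divisor supported on the complement of $U$, with the identification $R\simeq H^0(X\setminus A,\mathcal{O}_X)$ coming from a chain of small birational maps between affine varieties rather than ``by construction''). In $(1)\Rightarrow(2)$ you also skip the point that the charts $\operatorname{Spec}R_i$ must jointly meet \emph{every} prime divisor of $U$ before you can conclude that $\Omega$ has no zeros ``across all of $U$''; the paper proves this by contradiction from $R=R_0\cap\cdots\cap R_n$, since a divisor missed by all charts would force $R\subsetneq\Gamma(U\setminus D,\mathcal{O}_U)\subseteq\bigcap R_i$.

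The genuine gap is in $(3)\Rightarrow(1)$, and it is exactly the step you flag as ``the main obstacle'' and then leave unresolved. Two problems. First, you mutate toward the wrong divisors: the components of $B$ lying in $U=X\setminus A$ are already realized as invariant divisors of the single initial affine toric chart $V_0$ produced by Lemma~\ref{lem:affine-partial-compactifications} (that is precisely its condition (2)), so no further charts are needed for them. The divisors that the additional charts must reach are the prime divisors of $X\setminus B$ (where $\Omega$ has neither zero nor pole) that are missed by the initial torus embedding; your list $P_0,\dots,P_n$ does not include them, so with your charts the equality $R=R_0\cap\cdots\cap R_n$ can fail. Second, and more seriously, producing charts via Lemma~\ref{lem:toric-model-cluster-type} gives you crepant toric models but gives no control whatsoever that the resulting change of embedded semigroup algebras is a \emph{single mutation} in the sense of Definition~\ref{def:mutation}, i.e.\ of the form $x^m\mapsto x^m h^{-\langle u,m\rangle}$ with $h=g^k$ and $g$ irreducible supported on $u^\perp$; a priori the birational self-map of the torus relating two such charts can be arbitrarily complicated. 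This is the crux of the implication, and the paper does not re-derive it: it invokes \cite[Lemma 16.2.8]{corti2023cluster}, which supplies, for each prime divisor $E_i\subset X\setminus B$ not seen by the initial chart, a mutation $\mu_i$ of $\mathbb{G}_m^n$ such that $j_0\circ\mu_i$ is again an embedding in codimension one whose image meets $E_i$; Lemma~\ref{lem:affine-partial-compactifications} then upgrades each $j_i$ to an affine toric chart of $X\setminus A$, and the mutation condition of Definition~\ref{def:MSA} is inherited from $\mu_i$. Without this input (or an independent proof of it), your ``careful analysis of how the toric birational transformation acts on torus characters'' is not an argument, and the implication $(3)\Rightarrow(1)$ remains unproved.
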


\begin{lemma}\label{lem:principalization-on-sing-var}
Let $X$ be a normal projective variety.
Let $\mathcal{I}$ be an ideal sheaf on $X$ which is locally principal on a big open subset $U$. 
Then, there exists a projective birational morphism
$\phi\colon Y\rightarrow X$, which is a sequence of blow-ups of $X$ along subvarieties of codimension at least two, such that $\phi^{-1}\mathcal{I}$ is locally principal in $Y$. Furthermore, 
we may choose all the centers to be disjoint from $U$.
\end{lemma}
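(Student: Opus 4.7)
The plan is to apply principalization of coherent ideal sheaves in characteristic zero. By Hironaka-style principalization (see, e.g., Bierstone--Milman or Wlodarczyk; in the version required here, for coherent ideals on a normal, possibly singular, variety), there is a finite sequence of blow-ups
\[
\phi_N\colon X_N\to X_{N-1}\to\cdots\to X_0=X
\]
along closed subvarieties $Z_i\subset X_i$, each contained in the non-principal locus of $\mathcal{I}\cdot\mathcal{O}_{X_i}$, such that $\mathcal{I}\cdot\mathcal{O}_{X_N}$ is locally principal. Setting $Y:=X_N$ and $\phi:=\phi_N$ then gives a projective birational morphism with $\phi^{-1}\mathcal{I}$ locally principal.

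The disjointness from $U$ is immediate. Since $\mathcal{I}|_U$ is already locally principal, an induction on $i$ shows that each blow-up $X_i\to X_{i-1}$ is an isomorphism over the preimage of $U$, so every $Z_i$ lies in the complement of that preimage and, in particular, each $X_i\to X$ is an isomorphism over $U$ itself.

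The main obstacle is to ensure that each center $Z_i$ has codimension at least two in $X_i$. I would arrange this using the standard formulation of the principalization algorithm, which at every stage factors out the maximal invertible subsheaf of $\mathcal{I}\cdot\mathcal{O}_{X_i}$ (locally principal and supported on exceptional divisors from prior blow-ups) and selects $Z_i$ in the cosupport of the residual non-invertible factor. Then $Z_i$ contains no previously-produced exceptional divisor as a component, and since $Z_i$ also lies in the preimage of $X\setminus U$ (which has codimension $\geq 2$ in $X$), a short case analysis gives the codimension bound: points of $Z_i$ outside the exceptional locus map isomorphically to $X\setminus U$ and so have codimension at least two, while points of $Z_i$ on an exceptional divisor contribute at least one additional to the codimension since $Z_i$ is proper in each such divisor. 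Taking $Y=X_N$ then yields the desired factorization.
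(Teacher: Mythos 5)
Your argument has a genuine gap at its very first step. The principalization theorems you invoke (Hironaka, Bierstone--Milman, W\l odarczyk) are proved for coherent ideals on a \emph{smooth} ambient variety: the whole machinery of maximal contact, derivative ideals, and smooth centers requires regularity of the ambient space. Here $X$ is only normal, and in the intended application it is genuinely singular (the open set $U$ is merely klt), so there is no ``version required here, for coherent ideals on a normal, possibly singular, variety'' available to cite; asserting a principalization algorithm on singular $X$ whose centers lie in the non-principal locus is essentially assuming the statement of the lemma. The obvious repairs do not work either: if you first resolve $X$, or take a log resolution of $(X,\mathcal{I})$ in the usual way (blow up $\mathcal{I}$, then resolve), the centers lie in $\mathrm{Sing}(X)$ or over the blow-up of $\mathcal{I}$ and may well meet $U$, which destroys both the requirement that the centers be disjoint from $U$ and the downstream need (in the proof of Theorem~\ref{thm:geom-msa}) for $\phi$ to be an isomorphism over $U$. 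Your codimension bookkeeping for the centers is plausible once an algorithm with centers in the non-principal locus exists, but that existence is exactly what is missing.

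The paper takes a different route that avoids resolution-theoretic input on the singular $X$: let $\psi\colon Z\to X$ be the blow-up of $\mathcal{I}$, which tautologically principalizes $\psi^{-1}\mathcal{I}$ and is an isomorphism (hence flat) over $U$; then apply Rydh's refinement of Raynaud--Gruson flatification to $Z\to X$, which produces a sequence of blow-ups $X_n\to X$ with centers of codimension at least two contained in $X\setminus U$ such that the strict transform $Z_n\to X_n$ is flat, hence (being projective and birational) an isomorphism. Taking $Y:=Z_n\cong X_n$ gives the required morphism. If you want to salvage your approach, you would need to either prove a principalization statement on normal varieties with the stated control on centers (which is not in the literature you cite), or switch to a flattening-type argument as above.
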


\begin{proof}
First, let $\psi\colon Z\rightarrow X$ be the blow-up of $\mathcal{I}$. 
Then, by definition $\psi^{-1}\mathcal{I}$, is a locally principal ideal.
Furthermore, the projective morphism $\psi$
is an isomorphism, and hence flat, over $U$.
Let $Z_n \rightarrow X_n$ be the Gruson-Raynaud flattening on $Z\rightarrow X$. 
By~\cite[Theorem A]{Ryd25}, we know that $X_n\rightarrow X$ can be chosen to be a sequence of blow-ups along codimension two subvarieties.
Furthermore, by~\cite[Theorem A]{Ryd25}, we know that
the centers of $X_n\rightarrow X$ are contained in $X\setminus U$.
As $Z_n\rightarrow X_n$ is a flat projective birational morphism, we conclude that it is an isomorphism.
Hence, it suffices to take $Y:=Z_n$ in the statement. 
\end{proof} 

\begin{proof}[Proof of Theorem~\ref{thm:geom-msa}]
First, we argue that (1) implies (2). We claim that each morphism $\phi_i\colon U_i:=\Spec R_i\rightarrow U$ is an open embedding in codimension one and that the union of their images meet every prime divisor in $U.$ Granting this claim, we explain how (2) follows. It is immediate that the restriction of each $\phi_i$ to the open torus $\mathbb{G}^n_m\subset U_i$ is an embedding in codimension one. For each $0\leq i \leq n,$ let $W_i\subset U_i$ be a big open subset on which $\phi_i$ restricts to be an open embedding. It follows from Definition~\ref{def:MSA}.(2) that we may glue together volume forms on the $W_i\cap \mathbb{G}_m^n$ to obtain a rational volume form that vanishes nowhere on $W_0\cup\hdots\cup W_n$. But it follows from the claim that $W_0\cup\hdots\cup W_n$ meets every prime divisor on $U,$ hence this rational volume form vanishes nowhere on $U.$\par
To complete the proof that (1) implies (2), we prove our claim. It follows from Definition ~\ref{def:MSA}.(3) that for each $0\leq i \leq n$ and each prime divisor $D\subset U_i$, the closure in $U$ of $\phi_i(D)$ is a divisor. It follows, for each $0\leq i \leq n$, that there exists a big open $W_i\subset U_i$ on which $\phi_i$ restricts to be quasi-finite. Since $U$ is normal and each $\phi_i$ is birational, it follows from Zariski's Main Theorem that each $\phi_i|_{W_i}$ is actually an open embedding. To show that the open subset $W_0\cup\hdots\cup W_n\subset U$ meets every prime divisor in $U,$ we will assume that there is a prime divisor $D\subset U$ lying outside this union and will obtain a contradiction. Indeed, it follows from this assumption that $\Gamma(U\setminus D,\mathcal{O}_U)\subset \Gamma(W_i,\mathcal{O}_U)=R_i$ for each $0\leq i \leq n.$ Thus, $$R\subsetneq\Gamma(U\setminus D,\mathcal{O}_U)\subset R_1\cap\hdots\cap R_n.$$
But $R=R_1\cap\hdots\cap R_n$ by Definition ~\ref{def:MSA}, so this is nonsense.\par
Now, we turn to prove that (2) implies (3).
Let $U$ be a klt affine variety 
for which there exists an embedding in codimension one
$\mathbb{G}_m^n\dashrightarrow U$ and the volume form
$\Omega$ of the algebraic torus $\mathbb{G}_m^n$
is nowhere vanishing on $U$.
Let $B_U$ be the divisor induced by the poles of $\Omega$ on $U$.
We know that the pair $(U,B_U)$ is log canonical (see Lemma~\ref{lem:sing-of-cluster-type}).
As $U$ is klt, we know that $B_U$ is a $\qq$-Cartier divisor. 
Furthermore, the divisor $B_U$ contains on its support all the log canonical centers of the pair $(U,B_U)$.
Choose a projective closure $U\hookrightarrow X_0$
such that $X_0\setminus U$ supports a very ample divisor $A_{X_0}$.
By normalizing $X_0$, we may assume it is normal.
Let $B_{X_0}$ be the closure of $B_U$ on $X_0$. 
We choose $m\in \mathbb{Z}_{\geq 0}$ such that
$mB_{X_0}$ is Cartier on $U$. 
Let $\mathcal{I}$ be the ideal sheaf on $X_0$
defined by the Weil divisor $mB_{X_0}$. 
By Lemma~\ref{lem:principalization-on-sing-var}, we know that there exists a sequence of blow-ups 
$X_1\rightarrow X_0$
along subvarieties of codimension at least two in $X\setminus U$ which is a principalization for $mB_{X_0}$. 
As $\pi_1\colon X_1\rightarrow X_0$ is an isomorphism along $U$, we have an embedding $U\hookrightarrow X_1$.
Consider $E_1$ the reduced divisor supported on $X_1\setminus U$.
Let $B_{X_1}$ be the closure of $B_U$ on $X_1$. 
As $X_1\rightarrow X_0$ is a principalization of $mB_{X_0}$, we know there exists a Weil divisor $S$
whose support is contained in $\supp(E_1)$ 
such that $mB_{X_1}+S$ is Cartier on $X_1$. 
On the other hand, as $X_1\rightarrow X_0$ is a sequence of blow-ups of subvarieties of codimension at least two, there exists an effective divisor $G$ on $X_1$ such that $-G$ is ample over $X_0$.
By construction, the image of $G$ on $X_0$ is contained in $X_0\setminus U$. 
Therefore, for some small rational number $\epsilon$, the divisor $\pi_1^*A_{X_0}-\epsilon G$ is an ample effective divisor on $X_1$ and its support equals $X_1\setminus U$. 
We let $A_{X_1}:=\ell(\pi_1^*A_{X_0}-\epsilon G)$ where $\ell$ is large enough so that $A_{X_1}$ is a very ample divisor. We replace $(X_0,B_{X_0},A_{X_0})$
with $(X_1,B_{X_1},A_{X_1})$. 
By doing so, the following conditions are satisfied: 
\begin{enumerate}
\item $X_0$ is a normal projective variety, 
\item there is an effective $\qq$-Cartier divisor $B_{X_0}$ on $X_0$ such that $B_{X_0}|_{U}:=B_U$, and 
\item $X_0\setminus U$ fully supports a very ample effective divisor $A_{X_0}$. 
\end{enumerate} 
Let $p\colon Y\rightarrow X_0$ be a log resolution of $(X_0,B_{X_0}+A_{X_0})$. 
We know that $X$ is a rational variety that contains $U\supset \mathbb{G}_m^n$. 
We may assume that there is a projective birational morphism $q \colon Y\rightarrow \pp^n$ which is an isomorphism on $\mathbb{G}^n_m$. Further, we can assume that there is an effective $p$-exceptional divisor $G_Y \geq 0$ which is anti-ample over $X$. 
Let $E_Y$ be the reduced exceptional divisor of $p$ 
plus the strict transform of the reduced exceptional divisor supported on $X_0\setminus U$. 
Let $B_Y$ be the strict transform on $Y$ of the closure of $B_U$ in $X_0$.
By construction, the pair 
$(Y,B_Y+E_Y)$ is a $\qq$-factorial dlt pair.
Consider the log pair 
\begin{equation}\label{eq:log-pair-p^n}
(\pp^n,H_0+\dots+H_n) 
\end{equation} 
where the $H_i$'s are the poles of $\Omega$ on $\pp^n$.
There is a $\qq$-linear equivalence 
\[
K_Y+B_Y+E_Y \sim_\qq F_Y, 
\]
where $F_Y$ is an effective divisor supported along all the prime $q$-exceptional divisors that have positive log discrepancy with respect to $(\pp^n,H_0+\dots+H_n)$.
Indeed, this follows from the fact that
the pair~\eqref{eq:log-pair-p^n} has log canonical singularities and it is crepant birational equivalent to $(U,B_U)$. 
We run a $(K_Y+B_Y+E_Y)$-MMP over $X_0$ with scaling of an ample divisor. 
Note that every prime component of $F_Y$ whose center on $X$ intersects $U$ is a degenerate divisor over $X_0$. 
Therefore, after finitely many steps, this MMP 
terminates over $U$ (see, e.g.,~\cite[Lemma 2.9]{Lai11}). 
Let $Y_1$ be the model over $X_0$ on which this MMP terminates over $U$ 
and let $\phi_1\colon Y \dashrightarrow Y_1$ be the induced projective birational contraction over $X_0$. 
We denote by $\pi \colon Y\rightarrow X_0$ the induced projective birational morphism.
Let $B_{Y_1}$ and $E_{Y_1}$ be the push-forward of $B_Y$ and $E_Y$ on $Y_1$, respectively. 
Then, the pair $(Y_1,B_{Y_1}+E_{Y_1})$ is a $\qq$-factorial dlt pair.
Let $V_1\subset Y_1$ be the pre-image of $U$ in $Y_1$ and let $(V_1,B_{V_1}+E_{V_1})$ be the restriction of the aforementioned pair to $V_1$. 
Then, the morphism $(V_1,B_{V_1}+E_{V_1})\rightarrow (U,B_U)$ is a $\qq$-factorial dlt modification. 
We let $\pi_{V_1}$ be the restriction of $\pi_1$ to $V_1$.
Let $F_{Y_1}$ be the push-forward of $F_Y$ on $Y_1$. 
Then, by construction, we have that 
\[
K_{Y_1}+B_{Y_1}+E_{Y_1}\sim_\qq F_{Y_1} 
\]
and the effective divisor $F_{Y_1}$ satisfies 
${\rm Bs}_{-}(F_{Y_1})\subseteq {\rm supp}(F_{Y_1})$. 
Further, the support of $F_{Y_1}$ is in the complement of $V_1$. 
By~\cite[Lemma 2.9]{Lai11}, we may run an MMP with scaling of an ample divisor for $(Y_1,B_{Y_1}+E_{Y_1})$
which terminates after contracting all the components of $E_{Y_2}$. 
Let $\phi_2\colon Y_1 \dashrightarrow Y_2$ be the induced MMP. 
Let $B_{Y_2}$ and $E_{Y_2}$ be the push-forward to $Y_2$ of the divisors $B_{Y_1}$ and $E_{Y_1}$, respectively.
Then, the pair $(Y_2,B_{Y_2}+E_{Y_2})$ is a $\qq$-factorial dlt pair. 
Furthermore, by construction, the pair 
$(Y_2,B_{Y_2}+E_{Y_2})$ is crepant birational equivalent to the pair~\eqref{eq:log-pair-p^n} 
and so it is a log Calabi--Yau pair. 

We argue that $Y_2$ is a Fano type variety. 
By conditions (2) and (3) above, we know that there exists an ample effective $\qq$-Cartier divisor $H_{X_0}$ whose support equals such of $\supp(B_{X_0}+A_{X_0})$. 
Thus, the divisor $H_{X_0}$ contains every log canonical center of $(U,B_U)$. 
Let $H_Y:=p^*H_{X_0} - \epsilon G_Y$ for some small rational number $\epsilon$. 
Then, the divisor $H_Y$ is ample on $Y$.
Note that the divisor $H_Y$ is not effective.
However, all the negative coefficients of $H_Y$ happen along prime divisors that are not
log canonical places of the pair~\eqref{eq:log-pair-p^n}. 
By the negativity lemma, the indeterminacy locus of
$\phi_1^{-1}\circ \phi_2^{-1}\colon Y_2 \dashrightarrow Y_1$ contains no log canonical center of $(Y_2,B_{Y_2}+E_{Y_2})$. 
Indeed, if the indeterminancy locus of $\phi_1^{-1}\circ \phi_2^{-1}$ contained the center $c_{Y_2}(E)$ where $E$ is a log canonical place of $(Y_2,B_{Y_2}+E_{Y_2})$, then the monotonicity of
log discrepancies would imply
\[
a_E(Y_1,B_{Y_1}+E_{Y_1}) < a_E(Y_2,B_{Y_2}+E_{Y_2}) = 0, 
\]
see e.g.,~\cite[Lemma 2.3]{Mor25}. Thus, we would conclude that $(Y_1,B_{Y_1}+E_{Y_1})$ is not lc, leading to a contradiction. 
Hence, 
if $0\leq S_Y\sim_\qq H_Y$ is a general effective divisor
in the $\qq$-linear system $|H_Y|_\qq$
and $S_{Y_2}$ is its image in $Y_2$, then the pair
$(Y_2,B_{Y_2}+E_{Y_2}+\delta S_{Y_2})$ is log canonical for $\delta>0$ small enough.
Indeed, in this case, the divisor $S_{Y_2}$
contains no log canonical center of the $\qq$-factorial dlt pair $(Y_2,B_{Y_2}+E_{Y_2})$. 
Let $H_{Y_2}$ be the image of $H_Y$ on $Y_2$.
As $Y_2\dashrightarrow \pp^n$ only extract log canonical places of the pair~\eqref{eq:log-pair-p^n}, 
we conclude that $H_{Y_2}$ is an effective divisor
whose support equals such of $B_{Y_2}+E_{Y_2}$. 
Hence, for $\delta>0$ small enough, 
the log Calabi--Yau pair
\[
\left( Y_2, B_{Y_2}+E_{Y_2}-\delta H_{Y_2}+\delta S_{Y_2} \right) 
\]
is klt and the divisor $S_{Y_2}$ is big.
Hence, $Y_2$ is of Fano type. 

Recall that the MMP $Y_1\dashrightarrow Y_2$
is an isomorphism over $V_1$, so we may set
$V_2\simeq V_1$ its image on $Y$.  
Let $P_V$ be a prime divisor on $V_2$ 
which is contracted by the projective birational morphism $V_2\rightarrow U$. 
Let $P$ be the closure of $P_V$ in $Y_2$. 
The divisor $P$ is covered by curves that intersect $P$ negatively. Thus, we have ${\rm Bs}_{-}(P)\supseteq P$. 
Hence, we may run a $P$-MMP that terminates after contracting $P$.
Note that we are allowed to run this MMP as $Y_2$ is of Fano type and hence a Mori dream space by Lemma~\ref{lem:FT-are-MDS}.
Proceeding inductively, we obtain a projective birational map $\phi_3\colon Y_2\dashrightarrow Y_3$ that contracts the closure of all the prime divisors on $V_2$ which are contracted on $U$.
By construction, the variety $Y_3$ is a $\qq$-factorial Fano type variety (see Lemma~\ref{lem:FT-bir-cont}).
As usual, we let $B_{Y_3}$ and $E_{Y_3}$ the push-forward to $Y_3$ of $B_{Y_2}$ and $E_{Y_2}$, respectively. We denote by $V_3$ the complement
of $E_{Y_3}$ in $Y_3$. 
We obtain a commutative diagram as follows:
\[
\xymatrix{ 
Y \ar[r]^-{q}\ar[d]_-{p}\ar@{-->}[rd]^-{\phi_1} & \pp^n & &  \\
X_0 & Y_1 \ar[l]^-{\pi} \ar@{-->}[r]^-{\phi_2} & Y_2 \ar@{-->}[r]^-{\phi_3} & Y_3 \\
U \ar@{^{(}->}[u]  & V_1 \ar[l]_-{\pi_{V_1}} \ar@{^{(}->}[u] \ar@{-->}[r] & V_2 \ar@{^{(}->}[u]\ar@{-->}[r] & V_3\ar@{^{(}->}[u] \\
}
\]
By construction, the variety $V_3$ is small birational to the variety $U$.
Let $A_Y$ be the pull-back of $A_X$ to $Y$.
Then, the divisor $A_Y$ is an effective big divisor
which has the same support as $E_Y$.
Let $A_{Y_3}$ be the image of $A_Y$ on $Y_3$.
Then, the divisor $A_{Y_3}$ is an effective big divisor
that has the same support as $E_{Y_3}$.
As $Y_3$ is a Mori dream space, we can take the ample model of $A_{Y_3}$. Let $Y_3\dashrightarrow Y_4$ be the ample model of $A_{Y_3}$. 
Let $A_{Y_4}$ be the image of $A_{Y_3}$ in $Y_4$ which is an ample divisor.
Therefore, its complement in $Y_4$, call it $V_4$, is an affine variety. 
Again, the birational map $V_3\dashrightarrow V_4$ is a small birational map.
Thus, the induced map $U\dashrightarrow V_4$ is a small birational map between affine varieties and so it is an isomorphism. 
We set $X:=Y_4$, 
$B:=B_{Y_4}$, and 
$A:=A_{Y_4}$. 
By construction, the pair $(X,B)$ is of cluster type.
We have an isomorphism $U\simeq V_4 \simeq X\setminus A$.
So, taking the structure sheaf, we get 
\[
R \simeq \mathcal{O}(U) \simeq H^0(X\setminus A, \mathcal{O}_X). 
\]\par
Finally, we show that (3) implies (1). Since $(X,B)$ is cluster type, there exists a birational map $j_0\colon \mathbb{G}_m^n\dashrightarrow X\setminus B$ that is an open embedding in codimension one. Let $W_0\subset \mathbb{G}_m^n$ be a big open on which $j_0$ restricts to be an open embedding, and let $E_1,\hdots, E_n\subset X\setminus B$ be those prime divisors on $X\setminus B$ that are not contained in $j_0(W_0).$ For each $1\leq i \leq n,$ ~\cite[Lemma 16.2.8]{corti2023cluster} provides us with a mutation $\mu_i\colon \mathbb{G}_m^n\dashrightarrow \mathbb{G}_m^n$ satisfying the following conditions:
\begin{enumerate}
    \item[(i)] $j_i:=j_0\circ \mu_i\colon \mathbb{G}_m\dashrightarrow X\setminus B$ is an open embedding in codimension one, and
    \item[(ii)] if $W_i\subset \mathbb{G}_m^n$ is a big open on which $j_i$ restricts to be an open embedding, then $j_i(W_i)$ meets $E_i$.
\end{enumerate}
For each $0\leq i \leq n,$ let $\widetilde{j}_i\colon V_i\rightarrow X\setminus A$ be the extension of $j_i$ as in Lemma ~\ref{lem:affine-partial-compactifications}. Choose big opens $U_i\subset V_i$ on which the $\widetilde{j}_i$ restrict to be open embeddings, and write $R_i=\Gamma(\widetilde{j}_i(U_i),\mathcal{O}_X).$ By construction, the sets $\widetilde{j}_0(U_0),\hdots,\widetilde{j}_n(U_n)$ provide an open cover of a big open subset of $X\setminus A$. Thus, $R=R_0\cap \hdots \cap R_n.$ We claim that this presentation satisfies the conditions of Definition ~\ref{def:MSA}. Condition (1) of Definition ~\ref{def:MSA} holds with $\iota_i\colon \Gamma(V_i,\mathcal{O}_{V_i})\hookrightarrow {\rm Frac}(R)$ the embedding induced by the birational moprhism $\widetilde{j}_i.$ The birational map $\widetilde{j}_0^{-1}\circ \widetilde{j}_i=\mu_i$ restricts to a mutation $\mathbb{G}_m^n\dashrightarrow\mathbb{G}_m^n$ by construction. It follows from this and Lemma ~\ref{lem:affine-partial-compactifications} that it is a mutation $V_0\dashrightarrow V_i$ in the sense of Definition ~\ref{def:mutation}. Thus, condition (2) of Definition ~\ref{def:MSA} holds. Condition (3) of Definition ~\ref{def:MSA} follows from the fact that each $\widetilde{j}_i$ is an open embedding in codimension one.
\end{proof} 

\begin{proof}[Proof of Corollary~\ref{introcor:msa-Fano-comp}]
Let $R$ be a finitely generated commutative $\kk$-algebra.
Assume that $R$ is a mutation semigroup algebra. 
Furthermore, assume that $U:={\rm Spec}(R)$ has klt singularities. 
By the construction given in the proof of Theorem~\ref{thm:geom-msa}, we may assume, moreover, that the variety is Fano type.
By Proposition~\ref{prop:equiv-FT-logF}, $X$ is log Fano. This proves (1). 
If $U$ is $\qq$-factorial, we may replace $X$ with a small $\qq$-factorialization $X'$ which is an isomorphism over $U$. Replacing $X$ with $X'$ does not change the Fano type property (see, e.g.,~\cite[Lemma 2.17.(3)]{Mor24a}). Thus, Proposition~\ref{prop:equiv-FT-logF} shows that $X'$ is log Fano. This proves the last statement of the Corollary for (1). 

Now, assume that the volume form $\Omega_U$ of $U$ has no poles on $U$. Therefore, we have $U\subset X\setminus B$. Thus, the ample divisor $A$ has the same support as the divisor $B$.
However, a priori, the divisor $B$ may not be ample, or not even $\qq$-Cartier. 
Let $X'\rightarrow X$ be a small $\qq$-factorialization of $X$.
Note that $X'\rightarrow X$ is a small morphism
which is a Fano type morphism.
Let $B'$ be the strict transform of $B$ in $X'$
and $U'$ be the preimage of $U$ in $X'$.
Let $X''\rightarrow X$ be the ample model of $B'$ over $X$. As $B'$ is disjoint from $U'$ and every curve contracted by $U'\rightarrow U$ is $B'$-trivial, we conclude that there is an embedding $U\hookrightarrow X''\setminus B''$ where $(X'',B'')$ is a cluster type pair and $-K_{X''}\sim B''$. 
Note that $X''$ is still a Fano type variety.
Let $Y$ be the ample model of $B''$ and let $B_Y$
be the image of $B''$ in $Y$.
Therefore, we have $U\hookrightarrow Y\setminus B_Y$
and $B_Y$ is an ample divisor.
Furthermore, the pair $(Y,B_Y)$ is of cluster type
and $-K_Y \sim B_Y$ is an ample divisor.
Furthermore, by construction, the pair $(Y,B_Y)$ is log canonical and every log canonical center is contained in the support of $B_Y$. Therefore, 
the variety $Y$ has klt singularities and $-K_Y$ is ample. Thus, $Y$ is a klt Fano compactification of $U$.
This proves (2).

If $U$ is $\qq$-factorial, then we can take $X''=X'$ in the previous argument. Indeed, we still have an embedding $U\hookrightarrow X'\setminus B'$.
\end{proof}

The proof of Theorem~\ref{thm:geom-msa} works in the equivariant setting with respect to a quasi-torus action. Indeed, all the employed constructions: resolution of singularities, minimal model program, and Gruson-Raynaud flattening, work in the equivariant setting (see, e.g.,~\cite[Proposition 3.9.1]{Kol07},~\cite[Section 3.1]{Pro21},~\cite[Theorem A]{Ryd25}).
Thus, we obtain the following corollary.

\begin{corollary}\label{cor:geom-msa-equiv}
Let $\kk$ be an algebraically closed field of characteristic zero, let
$K$ be a finitely generated abelian group,
and let 
$R$ be a $K$-graded commutative ring 
which is finitely generated over $\kk$.
Let $\mathbb{T}={\rm Spec}(\kk[K])$ be the associated quasi-torus.
Assume that $U:={\rm Spec}(R)$ has klt singularities. 
Then, the two following conditions are equivalent:
\begin{enumerate}
\item The ring $R$ is a $K$-graded mutation semigroup algebra over $\kk$, and 
\item there is a $\mathbb{T}$-equivariant open embedding in codimension one $\mathbb{G}_m^n\dashrightarrow U$ such that the volume form $\Omega$ of $\mathbb{G}_m^n$ vanishes nowhere on $U$. 
\end{enumerate} 
\end{corollary}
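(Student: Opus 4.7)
The plan is to revisit the proof of Theorem~\ref{thm:geom-msa} and check that, under the $K$-grading hypothesis, every construction used there can be carried out $\mathbb{T}$-equivariantly. The $\mathbb{T}$-action on $U=\Spec(R)$ is the one induced by the $K$-grading on $R$, and a $K$-graded MSA structure on $R$ amounts to a decomposition $R=R_0\cap\cdots\cap R_n$ in which the subrings $R_i$, the embedded semigroup algebras $\iota_i$, and the mutations relating them, are all compatible with the $K$-grading.

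For the implication (1) $\Rightarrow$ (2), I would argue as in the non-equivariant case: each $\phi_i\colon U_i\to U$ is an open embedding in codimension one, and because $R_i$ is a $K$-graded subring of $R$, the morphism $\phi_i$ is $\mathbb{T}$-equivariant. The tori $\Spec \kk[M]\subset U_i$ therefore carry $\mathbb{T}$-actions, and the canonical volume forms on these tori (being unique up to scalar, and matched across mutations by Definition~\ref{def:mutation}) glue to a $\mathbb{T}$-equivariant nowhere-vanishing volume form on a big open of $U$, which extends to all of $U$ because $\Omega$ has no zero along any prime divisor of $U$.

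For (2) $\Rightarrow$ (1), I would reproduce the sequence of constructions in the proof of Theorem~\ref{thm:geom-msa} with the $\mathbb{T}$-action preserved throughout. Concretely: choose a $\mathbb{T}$-equivariant projective closure $U\hookrightarrow X_0$ supporting a $\mathbb{T}$-linearized very ample divisor, then apply the equivariant Gruson--Raynaud flattening of~\cite[Theorem A]{Ryd25} to principalize the ideal of the pole divisor. Take an equivariant log resolution (using equivariant resolution of singularities as in~\cite[Proposition 3.9.1]{Kol07}), and run the two MMPs appearing in the proof of Theorem~\ref{thm:geom-msa} equivariantly, invoking the $\mathbb{T}$-equivariant MMP developed in~\cite[Section 3.1]{Pro21}. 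The resulting cluster type pair $(X,B)$, the ample divisor $A\leq B$, and the isomorphism $R\simeq H^0(X\setminus A,\mathcal{O}_X)$ are then all $\mathbb{T}$-equivariant, so (3) of Theorem~\ref{thm:geom-msa} holds equivariantly. Finally, the passage from the cluster type compactification to the MSA decomposition uses the mutations of~\cite[Lemma 16.2.8]{corti2023cluster} applied to the equivariant torus embedding $j_0\colon\mathbb{G}_m^n\dashrightarrow X\setminus B$, which produce $\mathbb{T}$-equivariant embeddings $\widetilde{j}_i$ via the equivariant version of Lemma~\ref{lem:affine-partial-compactifications}, yielding a $K$-graded MSA presentation of $R$.

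The main obstacle, and the only nontrivial point, is the verification that every step of the original proof genuinely admits a $\mathbb{T}$-equivariant refinement. Resolution and MMP are well-documented in the equivariant setting under quasi-torus actions, and the recent equivariant Gruson--Raynaud flattening handles the most delicate step (Lemma~\ref{lem:principalization-on-sing-var}). Once these tools are in place, the proof of Theorem~\ref{thm:geom-msa} transfers essentially verbatim.
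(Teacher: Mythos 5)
Your proposal is correct and matches the paper's own argument: the authors justify this corollary precisely by observing that every construction in the proof of Theorem~\ref{thm:geom-msa} (resolution of singularities, the minimal model program, and Gruson--Raynaud flattening) admits a $\mathbb{T}$-equivariant version for quasi-torus actions, citing the same references \cite[Proposition 3.9.1]{Kol07}, \cite[Section 3.1]{Pro21}, and \cite[Theorem A]{Ryd25}. Your additional remarks on the equivariance of the maps $\phi_i$ in the direction (1)$\Rightarrow$(2) and on the equivariant version of Lemma~\ref{lem:affine-partial-compactifications} simply make explicit what the paper leaves implicit.
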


Now, we turn to prove the last two corollaries of Theorem~\ref{thm:geom-msa}.

\begin{proof}[Proof of Corollary~\ref{introcor:nice-comp-FT}]
Let $R$ be a mutation semigroup algebra and $U={\rm Spec}(R)$. 
Let $\Omega_U$ be the volume form on $U$.
Let $U\hookrightarrow X$ be a compactification for which $\Omega_U$ has a pole at every prime component of $X\setminus U$.
Since $\Omega_U$ has poles at all the components of $X\setminus U$, there exists $B\geq 0$ with $K_X+B\sim 0$ and $X\setminus B=U$.
By Theorem~\ref{thm:geom-msa}, we know that there exists an embedding $U\hookrightarrow X_0$
where $X_0$ is projective, $(X_0,B_0)$ is a cluster type pair, and there exists $A_0\leq B_0$ ample with $X_0\setminus A_0=U_0$.
In particular, $X_0$ is a Fano type variety
and the pair $(X_0,B_0)$ is crepant birational equivalent to $(X,B)$.
Therefore, there exists a projective birational morphism $Y\rightarrow X_0$, that only extracts log canonical places of $(X_0,B_0)$ and $Y\dashrightarrow X$ is a birational contraction 
(see, e.g.,~\cite[Theorem 1]{Mor19}).
By~\cite[Lemma 2.17.(3)]{Mor24a}, we conclude that $Y$ is Fano type.
By~\cite[Lemma 2.17.(1)]{Mor24a}, we conclude that $X$ is of Fano type.
Therefore, $X$ is log Fano and a Mori dream space.
\end{proof}

\begin{proof}[Proof of Corollary~\ref{introcor:comp-locally-acyclic}]
Let $R$ be a locally acyclic cluster algebra.
Let $U={\rm Spec}(R)$ be its spectrum.
By~\cite{BMRS15}, we know that $U$ has canonical singularities. 
By Theorem~\ref{thm:geom-msa}, we know there exists a klt Fano compactification $U\hookrightarrow X$. Furthermore, there exists a cluster type pair $(X,B)$ such that $U=X\setminus B$. 
Let $p\colon Y\rightarrow X$ be a $\qq$-factorial dlt modification of $(X,B)$. 
By~\cite[Lemma 2.17.(1)]{Mor24a}, we know that $Y$ is a log Fano variety. 
As $U\simeq X\setminus B$ and $U$ has canonical singularities, we conclude that $p$ is an isomorphism along $U$.
Write $p^*(K_X+B)=K_Y+B_Y$ so $K_Y+B_Y\sim 0$ and $B_Y$ is effective. 
We have $U\simeq Y\setminus B_Y$ and $Y$ is a log Fano variety, indeed, $B_Y$ fully supports an ample divisor.
It suffices to show that $Y$ has canonical singularities. 
Let $E$ be a divisorial valuation over $Y$.
Assume $a_E(Y)<1$. Then, we have
$a_E(Y,B_Y)=0$ as $a_E(Y,B_Y)<1$ must be a non-negative integer. 
Therefore, $c_Y(E)$ lies on the smooth locus of $Y$, from where we deduce that $a_E(Y)\geq 1$, leading to a contradiction.
We deduce that $a_E(Y)\geq 1$ for every divisorial valuation $E$ over $Y$.
Thus, $Y$ has canonical singularities. 
Therefore, $Y$ is a canonical log Fano compactification of $U$.
\end{proof} 

\section{Mutation semigroup algebras as Cox rings}
\label{sec:MA-vs-CR}
In this section, we show that Cox rings of 
cluster type varieties are graded MSA's. 
Furthermore, this statement characterizes 
cluster type varieties among Fano type varieties. 

\begin{proof}[Proof of Theorem~\ref{introthm:cluster-type-Cox-rings}]
Let $X$ be a $\qq$-factorial Fano variety.
Assume that ${\rm Cox}(X)$ is a ${\rm Cl}(X)$-graded
mutation semigroup algebra.
We let $\rho$ be the Picard rank on $X$.
Let $\bar{X}$ be the Cox space of $X$, i.e., 
the spectrum of the Cox ring of $X$. 
Let $\mathbb{T}$ be the class group quasi-torus acting on $\bar{X}$.
By~\cite[Theorem 1]{Bra19}, we know that $\bar{X}$ is a Gorenstein variety.
By~\cite[Theorem 1.1]{GOST15}, we know that $\bar{X}$ is log
terminal.
Therefore, the variety $\bar{X}$ is an affine variety 
with canonical singularities.
By Theorem~\ref{thm:geom-msa}, we know that there exists an open embedding 
$\mathbb{G}_m^{n+\rho}\hookrightarrow \bar{X}$ such
that the volume form $\Omega_{\bar{X}}$ 
of the algebraic torus 
has no zeros on $\bar{X}$.
As ${\rm Cox}(X)$ is a ${\rm Cl}(X)$-graded
mutation semigroup algebra, 
the torus embedding $\mathbb{G}_m^{n+\rho}\hookrightarrow \bar{X}$ is 
$\mathbb{T}$-equivariant. 
In particular, the algebraic quasi-torus $\mathbb{T}$ acts linearly on $\mathbb{G}_m^{n+\rho}$. 
Let $Z\subset \bar{X}$ be the irrelevant locus
of the Cox space.
Thus, we have a good quotient
$\pi\colon \bar{X}\setminus Z \rightarrow X$ for the $\mathbb{T}$-action. 
By construction, we know that $Z$ has codimension at least two and it is $\mathbb{T}$-invariant.
In particular, we conclude that 
\[
\mathbb{G}_m^{n+\rho} \setminus Z \simeq 
\mathbb{T} \times \mathbb{G}_m^{n}\setminus Z_0,   
\]
where $Z_0\subset \mathbb{G}_m^n$ is a closed subset of codimension at least two. 
Thus, the variety $X$ admits an open subset isomorphic to 
\[
(\mathbb{G}_m^{n+\rho}\setminus Z)/\!/ \mathbb{T} 
\simeq \mathbb{G}_m^n \setminus Z_0.
\]
Thus, we conclude that there is an embedding 
in codimension one 
$\mathbb{G}_m^n \dashrightarrow X$.
Let $\Omega_X$ be the volume form of this algebraic torus. It suffices to show that $\Omega_X$ has no zeros on $X$. 
In order to do so, it suffices to show that there exists a log Calabi--Yau pair structure $(X,B)$ 
such that $\mathbb{G}_m^n\dashrightarrow X\setminus B$
is an embedding in codimension one. 
Let $Y$ be the normalized Chow quotient for
the action of $\mathbb{T}$ on $\bar{X}$. 
Then, we have a projective birational morphism
$\phi\colon Y\rightarrow X$  (see, e.g.,~\cite[\S 5.2]{AW11}). 
There exists a $\mathbb{T}$-equivariant 
projective birational morphism $r\colon \widetilde{X}\rightarrow \bar{X}$ and a good quotient $\pi_0\colon \widetilde{X}\rightarrow Y$ for the $\mathbb{T}$-action (see, e.g.,~\cite[Theorem 1]{AH06}). 
We obtain a commutative diagram as follows:
\[
\xymatrix{
\widetilde{X} \ar[d]_-{\pi_0} \ar[r]^-{r}  & \bar{X} \ar@{-->}[d]^-{\pi} \\ 
Y\ar[r]^-{\phi} & X.
}
\]
Let $(\bar{X},\bar{B})$ be the Calabi--Yau pair on $\bar{X}$ induced by the poles of the volume form $\Omega_{\bar{X}}$. 
If we write 
\[
K_{\widetilde{X}}+B_{\widetilde{X}} =
r^*(K_{\bar{X}}+\bar{B}), 
\]
then all the prime $\mathbb{T}$-invariant divisors
of $\widetilde{X}$ that are horizontal over $Y$ are components of $B_{\widetilde{X}}^{=1}$. 
Indeed, all the prime $\mathbb{T}$-invariant divisors
of $\widetilde{X}$ that are horizontal over $Y$
are poles of $\Omega_{\bar{X}}$. 
Thus, the sub-log Calabi--Yau pair $(\widetilde{X},B_{\widetilde{X}})$ has a log canonical center $Y_0$ that maps birationally to $Y$.
Let $(Y_{0},B_{Y_0})$ be the sub-log Calabi--Yau pair
obtained by adjunction of $(\widetilde{X},B_{\widetilde{X}})$ to $Y_0$.
We let $B_Y$ be the push-forward of $B_{Y_0}$ to $Y$.
By the negativity lemma, the pair
$(Y,B_Y)$ is a sub-log Calabi--Yau pair. 
Let $B_X$ be the push-forward of $B_Y$ on $X$.
Then, the pair $(X,B_X)$ is a sub-log Calabi--Yau pair of index one. 
By construction, the divisor $B_X$ is in the complement of $\mathbb{G}_m^n$ in $X$.
Indeed, this follows from the fact that $\pi\colon \bar{X}\dashrightarrow X$ is an \'etale principal $\mathbb{T}$-bundle over $X^{\rm sm}$ (see~\cite[Proposition 1.6.1.6]{ADHL15}).
We argue that $B_X$ is an effective divisor.
Indeed, the map $\pi$ is surjective, therefore, 
for every prime divisor $P$ of $X$ 
there is a $\mathbb{T}$-invariant prime divisor $Q$ on $\widetilde{X}$ mapping onto $P$ for which
${\rm coeff}_{\rm Q}(B_{\widetilde{X}})\geq 0$.
By construction, the coefficient of $B_X$ at $P$
must be at least
\[
\frac{1-m}{m} + \frac{ {\rm coeff}_{\rm Q}(B_{\widetilde{X}})}{m} \geq 0
\]
where $m$ is the multiplicity of $\pi_0^*\pi^*P$ at $Q$. 
The previous statement follows from the adjunction formula, which determines the coefficients of $B_{Y_0}$, (see, e.g.,~\cite[Corollary 3.10]{Sho92} and~\cite[Theorem 3.21]{PS11}).
Thus, we conclude that $B_X$ is an effective divisor.
Hence, the pair $(X,B_X)$ is log Calabi--Yau and
$\mathbb{G}_m^n\dashrightarrow X\setminus B_X$ is a embedding in codimension one.
We conclude that $X$ is a cluster type variety by Lemma~\ref{lem:cluster-type-via-tori}.

Now, assume that $X$ is a cluster type variety.
Let $(X,B)$ be a cluster type pair structure. 
Therefore, we have that $(X,B)$ is log Calabi--Yau
and there is an open embedding 
$\mathbb{G}^n_m\hookrightarrow X\setminus B$ (see, e.g.,~\cite[Theorem 1.3.(4)]{EFM24}).
Let $\Omega_X$ be the volume form of the algebraic torus. 
Let $p_X\colon \hat{X}\rightarrow X$ be the relative spectrum
of the Cox sheaf on $X$. 
Then $p_X$ is an \'etale $\mathbb{T}$-principal 
bundle over the smooth locus of $X$ (see~\cite[Proposition 1.6.1.6.(i)]{ADHL15}).
In particular, we can find an embedding 
$\mathbb{G}_m^{n+\rho}\hookrightarrow \bar{X}$ (see~\cite[Construction 1.6.1.5]{ADHL15} and~\cite[Theorem 3]{BM21}).
Let $\Omega_{\bar{X}}$ be the associated volume form on $\bar{X}$.
Let $Y$ be the normalized Chow quotient
for the $\mathbb{T}$-action on $\bar{X}$.
Then, there is a $\mathbb{T}$-equivariant 
projective birational morphism
$r\colon \widetilde{X}\rightarrow \bar{X}$ 
and a good quotient $\pi_0\colon \widetilde{X}\rightarrow Y$ for the $\mathbb{T}$-action.
Write $K_X+B={\rm div}(f)$ with $f\in \mathbb{K}(X)$. 
Let $K_Y+B_Y=\pi^*(K_X+B)$. 
A canonical divisor $K_Y$ on $Y$ 
determines a canonical divisor $K_{\widetilde{X}}$ on $\widetilde{X}$ (see e.g.,~\cite[Theorem 3.21]{PS11}).
Let $E$ be the reduced sum of all the $\mathbb{T}$-invariant prime divisors of $\widetilde{X}$ which are horizontal over $Y$.
Let $B_{\widetilde{X}}$ be the unique Weil divisor for which
\[
K_{\widetilde{X}}+B_{\widetilde{X}}+E = {\rm div}(f),
\]
where $f$ is considered as a $\mathbb{T}$-invariant rational function on $\widetilde{X}$
via the isomorphism $\mathbb{K}(\widetilde{X})^{\mathbb{T}}\simeq \mathbb{K}(X)[M]$ (see, e.g.,~\cite[Section 11.2]{AIPSV12}).
The volume form $\Omega_{\bar{X}}$ has a pole along 
$\pi_0^*P$ if and only if $\Omega_X$ has a pole along $P$. 
By construction, every prime component of $B_{\widetilde{X}}$ with non-positive coefficient 
gets contracted in $\bar{X}$. 
Indeed, every prime component of $B_Y$ with non-positive coefficient gets contracted in $X$. 
Therefore, if $\bar{B}$ is the push-forward of
$B_{\widetilde{X}}$ in $\bar{X}$,
then $(\bar{X},\bar{B})$ is a log Calabi--Yau pair
and $\mathbb{G}_m^{n+r}\hookrightarrow \bar{X}\setminus \bar{B}$.
Furthermore, the boundary divisor $\bar{B}$ is given by the set of poles of $\Omega_{\bar{X}}$. 
By~\cite[Theorem 1]{Bra19}, we know that $\bar{X}$ is Gorenstein,
and by~\cite[Theorem 1.1]{GOST15}, we know that $\bar{X}$ has klt type singularities.
Thus, we conclude that $\bar{X}$ is an affine variety with klt singularities.
Hence, by Corollary~\ref{cor:geom-msa-equiv}, we conclude that the ring  ${\rm Cox}(X)$ is a ${\rm Cl}(X)$-graded volume preserving algebra.
\end{proof} 

\begin{proof}[Proof of Theorem~\ref{introthm:Cox-space-covered-algebraic-torus}]
Let $(X,B)$ be a cluster type structure of $X$. 
Let $P$ be a prime divisor on $X$. 
Let $\pi\colon \bar{X}\dashrightarrow X$ be
the rational quotient map from its Cox space.
It suffices to show that there is an embedding 
$j\colon \mathbb{G}_m^n\hookrightarrow \bar{X}$ such
that $j(\mathbb{G}_m^n)$ contains the generic point of $\pi^{-1}(P)$. 
By~\cite[Theorem 1.3]{EFM24}, we know that $X\setminus B$ 
is covered by copies of $\mathbb{G}_m^n$ 
up to a closed subset $Z\subset X\setminus B$ 
of codimension at least two. 
Assume that the generic point of $P$ 
is contained in $X\setminus B$.
Therefore, it is also contained in $X\setminus (B\cup Z)$.
So there is an embedding 
$j_0\colon \mathbb{G}_m^n \hookrightarrow X\setminus (B\cup Z)$ such that $j_0(\mathbb{G}_m^n)$ contains 
the generic point of $P$. 
Let $\pi_0\colon \bar{X}\rightarrow X$ be the relative spectrum 
over $X$ of the Cox sheaf of $X$. 
The morphism $\pi_0$ is an \'etale principal $\mathbb{T}_{{\rm Cl}(X)}$-torsor over $X^{\rm sm}$ (see, e.g.,~\cite[Proposition 1.6.1.6]{ADHL15}).
By~\cite[Theorem 3]{BM24}, we know that $\pi_0\colon \pi_0^{-1}(j_0(\mathbb{G}_m^n))\rightarrow j_0(\mathbb{G}_m^n)$ is the composition of an \'etale finite cover and a trivial $\mathbb{G}_m^\rho$-bundle, so we conclude that there is an embedding $\mathbb{G}_m^{n+\rho}\hookrightarrow \hat{X}$.
By~\cite[Construction 1.6.1.5]{ADHL15}, we know that there is an embedding $\hat{X}\hookrightarrow \bar{X}$. 
Thus, we conclude that there is an embedding 
$\mathbb{G}_m^{n+\rho}$ that contains the generic point of
$\pi^{-1}(P)$. 

From now on, we may assume that $P$ is a prime component of $B$. 
By Lemma~\ref{lem:toric-model-cluster-type}, there exists a crepant birational map $(T,B_T)\dashrightarrow (X,B)$ satisfying the following conditions:
\begin{enumerate}
\item the pair $(T,B_T)$ is toric,
\item the center $P_T$ of $P$ in $T$ is a prime divisor,
\item for every prime component $Q\subset B$ the center
of $Q$ in $T$ is a prime divisor, and 
\item for every prime divisor $Q\subset X$ with $Q\not\subset B$ either 
\begin{enumerate}
\item[(a)] the center of $Q$ in $T$ is a prime divisor, or 
\item[(b)] the center of $Q$ in $T$ is a codimension two subset which is not contained in $P_T$. 
\end{enumerate} 
\end{enumerate} 
By Lemma~\ref{lem:torus-in-toric}, there is an embedding 
$j_T\colon \mathbb{G}_m^n \hookrightarrow T$ such that 
$j_T(\mathbb{G}_m^n)$ contains the generic point of $P_T$
and $T\setminus j_T(\mathbb{G}_m^n)$ contains every prime torus invariant divisor of $T$ except for $P_T$.
Let $(Y,B_Y)\rightarrow (T,B_T)$ be the crepant birational 
projective morphism obtained by extracting 
every $Q\subset X$ with $Q\not \subset B$ (see~\cite[Theorem 1]{Mor19}). 
Then, we have two birational maps 
\[
\xymatrix{
(Y,B_Y) \ar[d]_-{\psi}\ar@{-->}[rd]^-{\phi} & \\ 
(T,B_T) & (X,B), 
}
\]
where $\phi\colon (Y,B_Y)\dashrightarrow (X,B)$ is a crepant birational contraction. 
By passing to a small $\qq$-factorialization,
we may assume that $Y$ is $\qq$-factorial.
Let $P_Y$ be the strict transform of $P_T$ in $Y$.
By condition (4).(b), we have an embedding $j_Y\colon \mathbb{G}_m^n$ such that $j_Y(\mathbb{G}_m^n)$ contains the generic point of $P_Y$ and $Y\setminus j_Y(\mathbb{G}_m^n)$ contains every component of $B_Y$ except for $P_Y$.
The birational map $\phi$ is a birational contraction
that only extracts log canonical places of $(X,B)$.
Therefore, the variety $Y$ is of Fano type by Lemma~\ref{lem:FT-higher-model}.
Let $S_Y$ be the reduced exceptional divisor of $\phi$. 
The divisor $S_Y$ is degenerate over $X$ so by the negativity lemma we have ${\rm supp}(S_Y) \subseteq {\rm Bs}_{-}(S_Y)$.
As $Y$ is a Mori dream space, by Lemma~\ref{lem:FT-are-MDS}, we can run a $S_Y$-MMP that terminates after contracting all the components of $S_Y$.
Let $\phi'\colon Y \dashrightarrow X'$ be the outcome of this MMP. Let $B'$ be the push-forward of $B_Y$ in $X'$.
Let $P'$ be the push-forward of $P_Y$ in $X'$.
Thus, we obtain a commutative diagram:
\[
\xymatrix{
(Y,B_Y) \ar[d]^-{\psi} \ar@{-->}[r]^-{\phi'}
\ar@{-->}[rd]^-{\phi}
& (X',B') \ar@{-->}[d]^-{\rho} \\
(T,B_T) & (X,B) 
}
\]
where both $\phi$ and $\phi'$ are birational contractions 
and $\rho$ is a small crepant birational map.
We know that $S_Y$ is disjoint from $j_Y(\mathbb{G}_m^n)$. 
so  there is an embedding $j'\colon \mathbb{G}_m^n\hookrightarrow X'$
such that $j'(\mathbb{G}_m^n)$ contains the generic point of
$P'$ and $X'\setminus j'(\mathbb{G}_m^n)$ contains all the components of $B'$ except for $P'$. 
As $\rho$ is a small birational map between $\qq$-factorial  Mori dream spaces, we have two rational maps 
from the Cox space
\[
\xymatrix{ 
& \bar{X} \ar@{-->}[ld]_-{\pi}\ar@{-->}[rd]^-{\pi'} & \\
X & & X'
}
\]
corresponding to different GIT quotients. 
Arguing as in the first paragraph\footnote{the argument in the first paragraph applies verbatim after replacing $X'$ with $X$.}, we conclude that 
there exists an embedding $\mathbb{G}_m^{n+\rho}\hookrightarrow \bar{X}$ that contains the generic point of ${\pi'}^{-1}(P')$. 
Thus, we conclude that this open algebraic torus of $\bar{X}$
contains the generic point of $\pi^{-1}(P)$.
Henceforth, the Cox space $\bar{X}$ 
is covered, up to a closed subset of codimension two,
by open algebraic tori.
\end{proof} 

\bibliographystyle{habbvr}
\bibliography{bib}

\vspace{0.5cm}
\end{document}